\definecolor{citation}{rgb}{0,.40,.80}
\definecolor{reference}{rgb}{.80,0,.40}
\definecolor{todogray}{HTML}{E4E4E4}
\numberwithin{equation}{section}
\theoremstyle{plain}
\newtheorem{theorem}{Theorem}[section]
\newtheorem{lemma}[theorem]{Lemma}
\newtheorem{proposition}[theorem]{Proposition}
\newtheorem{corollary}[theorem]{Corollary}
\newtheorem{question}[theorem]{Question}
\theoremstyle{definition}
\newtheorem{definition}[theorem]{Definition}
\newtheorem{example}[theorem]{Example}
\newtheorem{remark}[theorem]{Remark}
\newtheorem*{remark*}{Remark}
\newcommand{\Db}{\mathrm{D^b}}
\newcommand{\Dperf}{\mathrm{D}_{\mathrm{perf}}}
\newcommand{\sB}{\mathcal{B}}
\DeclareMathOperator{\Pic}{Pic}
\DeclareMathOperator{\pt}{pt}
\DeclareMathOperator{\Cone}{Cone}
\DeclareMathOperator{\Spec}{Spec}
\DeclareMathOperator{\Hom}{Hom}
\DeclareMathOperator{\Ext}{Ext}
\DeclareMathOperator{\Aut}{Aut}
\DeclareMathOperator{\HH}{HH}
\newcommand{\id}{\mathrm{id}}
\newcommand{\Cat}{\mathrm{Cat}}
\renewcommand{\top}{\mathrm{top}}
\DeclareMathOperator{\HP}{HP}
\newcommand{\prim}{\mathrm{prim}}
\newcommand{\GLcov}{\widetilde{\mathrm{GL}}}
\newcommand{\hooklongrightarrow}{\lhook\joinrel\longrightarrow}
\newcommand{\ch}{\mathrm{ch}}
\newcommand{\rk}{\mathrm{rk}}
\newcommand{\td}{\mathrm{td}}
\newcommand{\rK}{\mathrm{K}}
\newcommand{\bC}{\mathbf{C}}
\newcommand{\bZ}{\mathbf{Z}}
\newcommand{\bP}{\mathbf{P}}
\newcommand{\bQ}{\mathbf{Q}}
\newcommand{\bR}{\mathbf{R}}
\newcommand{\bL}{\mathbf{L}}
\newcommand{\sfR}{\mathsf{R}}
\newcommand{\sfS}{\mathsf{S}}
\begin{document}

\title[Cyclic covers: Hodge theory and categorical Torelli]{Cyclic covers: Hodge theory and categorical Torelli theorems}

\author[H. Dell]{Hannah Dell}
\address{HD: School of Mathematics and Maxwell Institute, University of Edinburgh, James Clerk Max\-well Building, Peter Guthrie Tait Road, Edinburgh, EH9 3FD, United Kingdom}
\email{h.dell@sms.ed.ac.uk}

\author[A. Jacovskis]{Augustinas Jacovskis}
\address{AJ: Department of Mathematics, Maison du Nombre, 6 avenue de la Fonte, L-4364 Esch-sur-Alzette, Luxembourg}
\email{augustinas.jacovskis@uni.lu}

\author[F. Rota]{Franco Rota}
\address{FR: School of Mathematics and Statistics, University of Glasgow, Glasgow G12 8QQ, United Kingdom} 
\email{franco.rota@glasgow.ac.uk}

\thanks{ HD and AJ were supported by ERC Consolidator Grant WallCrossAG, no. 819864. AJ was also supported by the Luxembourg National Research Fund (FNR–17113194). FR is supported by EPSRC grant EP/R034826/1 and acknowledges support from the Deutsche Forschungsgemeinschaft (DFG, German Research Foundation) under Germany's Excellence Strategy -- EXC-2047/1 -- 390685813.}

\subjclass[2020]{Primary 14F08; secondary 14J45, 14C34}
\keywords{Derived categories, cyclic covers, Fano threefolds, K-theory, categorical Torelli theorems}

\maketitle

\begin{abstract}
Let $Y$ admit a rectangular Lefschetz decomposition of its derived category, and consider a cyclic cover $X\to Y$ ramified over a divisor $Z$. In a setting not considered by Kuznetsov and Perry \cite{kuznetsov2017derived}, we define a subcategory $\sA_Z$ of the equivariant derived category of $X$ which contains, rather than is contained in, $\Db(Z)$. We then show that the equivariant category of the Kuznetsov component of $X$ is decomposed into copies of $\sA_Z$.

As an application,
we relate $\sA_Z$ with the cohomology of $Z$ under some numerical assumptions. In particular, we obtain categorical Torelli theorems for the lowest degree prime Fano threefolds of index 1 and 2.

\end{abstract}

\section{Introduction}

In \cite{kuznetsov2009derived}, Kuznetsov systematically studies derived categories of Fano threefolds and subcategories -- known as \textit{Kuznetsov components} -- arising as orthogonal complements to exceptional collections. These categories are linked to questions of rationality \cite{kuznetsov2010cubic}, stability conditions \cite{bayer2017stability} and to hyperk\"ahler geometry through moduli spaces \cite{LPZ18twistedcubics}, as well as questions in birational geometry and Hodge theory \cite{perry2022integral}.

In this paper, we start from a variety $Y$ admitting a sheaf $\sO_Y(1)$ and a rectangular Lefschetz decomposition of $\Db(Y)$, i.e. an admissible subcategory $\sB$ and a decomposition
\begin{equation*}
    \Db(Y) = \langle \sB, \dots, \sB(m-1) \rangle,
\end{equation*}
with $\sB(i)$ the image of $\sB$ under the equivalence $-\otimes \sO_Y(1)$. Examples of varieties with rectangular decompositions are projective space, Grassmannians, some homogeneous spaces (see \cite[Section 4.1]{kuznetsov2019calabi}). Another important case, also covered in this article, is when $Y$ is a Deligne--Mumford stack, for example a weighted projective space. Then, we consider an $n$-fold cover $X$ of $Y$, ramified along a divisor $Z \in |\sO_Y(nd)|$. The derived category $\Db(X)$ has a semiorthogonal decomposition \[ \Db(X) = \langle \sA_X, \sB_X, \dots, \sB_X(M-1) \rangle \] where $\sB_X$ is the pullback of $\sB$ under the covering map, and $M \coloneqq m-(n-1)d$.  The action of $\mu_n$ on $X$ by the covering involution restricts to one on $\sA_X$, giving rise to the Kuznetsov component $\sA_X^{\mu_n} \subset \Db(X)^{\mu_n}$. This is the same setting of \cite{kuznetsov2017derived}, with the crucial difference that we work in the range $0<M<d$, rather than $d\leq M$. The former corresponds to allowing more positivity for $Z$ (see Section \ref{relation to other works}). In \cite{kuznetsov2017derived}, $\Db(Z)$ also admits a Kuznetsov component $\sA_Z$, and $\sA_X^{\mu_n}$ decomposes into copies of $\sA_Z$. We define a subcategory $\sA_Z \subset \Db(X)^{\mu_n}$ which \textit{contains}, rather than is contained in, $\Db(Z)$ (see \eqref{eq:A_Z_definition}), and in our first main result we exhibit a semiorthogonal decomposition for $\sA_{X}^{\mu_n}$ with components $\sA_Z$.

\begin{theorem}[= Theorem {\ref{thm:main}}]\label{thm:1.1}
    Assume that we are in the situation above, and that $0<M<d$. Then there are fully faithful functors $\Phi_j\colon \sA_Z \to \sA_X^{\mu_n}$, with $j=0,\ldots, n-2$, and a semiorthogonal decomposition:
        \begin{equation*}
        \sA_X^{\mu_n} = \langle \Phi_0(\sA_Z) , \dots, \Phi_{n-2}(\sA_Z) \rangle \subset \Db(X)^{\mu_n}.
    \end{equation*}
\end{theorem}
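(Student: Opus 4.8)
The plan is to produce two semiorthogonal decompositions of the equivariant category $\Db(X)^{\mu_n}$ and to match them. Since $\sO_X(1) = \pi^*\sO_Y(1)$ carries a canonical $\mu_n$-linearization, every component of $\Db(X) = \langle \sA_X, \sB_X, \ldots, \sB_X(M-1)\rangle$ is $\mu_n$-stable, so by Elagin's theorem on equivariant semiorthogonal decompositions (valid as $n$ is invertible) this equivariantizes to
\[
\Db(X)^{\mu_n} = \langle \sA_X^{\mu_n}, \sB_X^{\mu_n}, \ldots, \sB_X(M-1)^{\mu_n} \rangle .
\]
A short computation identifies the linear part: writing $\chi$ for a generator of the character group, the adjunction $(\pi^*,\pi_*)$ together with $\pi_*\sO_X = \bigoplus_{i=0}^{n-1}\sO_Y(-id)$ and the vanishing $\Hom_Y(\sB,\sB(-id)) = 0$ for $1 \le i \le n-1$ — which holds since $(n-1)d \le m-1$ in our range — shows that $\pi^*|_{\sB}$ is fully faithful and that the twists $\pi^*\sB \otimes \chi^j$ are mutually completely orthogonal. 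Hence each $\sB_X(i)^{\mu_n}$ is an orthogonal sum of the $n$ copies $\pi^*\sB(i)\otimes\chi^j$, and the whole content of the theorem is concentrated in the residual piece $\sA_X^{\mu_n}$.

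For the second decomposition I would use the structure of $\Db(X)^{\mu_n}$ as the derived category of the $n$-th root stack of $(Y,Z)$, which carries the standard decomposition $\Db(X)^{\mu_n} = \langle \Db(Y), \Db(Z)_1, \ldots, \Db(Z)_{n-1}\rangle$ (compare \cite{kuznetsov2017derived}) with $n-1$ copies of $\Db(Z)$, the $k$-th embedded through the ramification divisor $\widetilde Z \cong Z$ twisted by $\chi^{k}$. This is the decomposition that sees $\Db(Z)$ directly, and the functors $\Phi_j$ of \eqref{eq:A_Z_definition} are assembled from these embeddings together with the twisted pullbacks supplying the $d-M$ augmenting copies of $\sB$ that enlarge $\Db(Z)$ to $\sA_Z$. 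A first consistency check is numerical: both decompositions express $\Db(X)^{\mu_n}$, in the rational Grothendieck group, as $n-1$ copies of $\Db(Z)$ together with $m$ copies of $\sB$; substituting $M = m-(n-1)d$ shows that $\sA_X^{\mu_n}$ has the same class as $n-1$ copies of $\sA_Z = \langle \Db(Z), (d-M)\text{ copies of }\sB\rangle$, exactly as the theorem predicts.

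With the functors in hand I would verify full faithfulness and semiorthogonality. Full faithfulness of each $\Phi_j$ reduces, through the adjunctions $(\pi^*,\pi_*)$ and $(i^*,i_*)$ for $\widetilde Z \hookrightarrow X$, to $\Hom$-computations in $\Db(Y)$ and $\Db(Z)$; the $\mu_n$-weight bookkeeping isolates the isotypic component for $\chi^{j+1}$, on which the morphism spaces collapse to those of $\sA_Z$, and one checks that the mutation into $\sA_X^{\mu_n}$ respects the linearization. For semiorthogonality of $\langle\Phi_0(\sA_Z),\ldots,\Phi_{n-2}(\sA_Z)\rangle$ one computes $\Hom(\Phi_j a, \Phi_{j'}a')$ for $j>j'$ by the same adjunctions; after extracting the correct isotypic part, the vanishing is controlled by $\Hom_Y(\sB,\sB(-kd))$ with $0<k<n$ and by the self-orthogonality of the $\Db(Z)$-copies. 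This is the first place the hypothesis $0<M<d$ is essential: it forces the twists $\sO_Y(kd)$ that occur to land in the gap of the rectangular collection, and it simultaneously guarantees that the $d-M = nd-m$ augmenting copies of $\sB$ are genuinely new, so that $\Db(Z) \subsetneq \sA_Z$ — the reverse of the inclusion $\sA_Z \subset \Db(Z)$ that occurs for $d \le M$ in \cite{kuznetsov2017derived}.

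The main obstacle is generation. My plan is to establish the stronger statement
\[
\Db(X)^{\mu_n} = \langle \Phi_0(\sA_Z), \ldots, \Phi_{n-2}(\sA_Z), \sB_X^{\mu_n}, \ldots, \sB_X(M-1)^{\mu_n}\rangle ,
\]
and then compare it with the equivariantized decomposition of the first paragraph: the two share the identical linear tail, whose left-orthogonal complement inside $\Db(X)^{\mu_n}$ is well-defined, and this forces $\sA_X^{\mu_n} = \langle \Phi_0(\sA_Z), \ldots, \Phi_{n-2}(\sA_Z)\rangle$. To prove the displayed decomposition I would start from the root-stack decomposition $\langle \Db(Y), \Db(Z)_1, \ldots, \Db(Z)_{n-1}\rangle$ and mutate, letting the $\Db(Z)_k$ become the $\Db(Z)$-part of $\Phi_{k-1}(\sA_Z)$ while the $m$ blocks of $\Db(Y) = \langle \sB, \ldots, \sB(m-1)\rangle$ are carried across the $\Db(Z)$-copies to build both the linear components and the augmenting $\sB$'s of each $\sA_Z$. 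The delicate and genuinely hard part is the per-character bookkeeping: the blocks produced must carry all $n$ characters, so the mutations must interleave the trivial-character blocks of $\Db(Y)$ with the $\Db(Z)$-copies, and controlling the correction terms requires knowing precisely how the restricted collection $\sB|_Z$ sits inside each $\Db(Z)$. That the accounting closes with no leftover summand is the geometric incarnation of the identity $m = (n-1)d + M$ together with $0 < M < d$; since the numerical count of the second paragraph already fixes all sizes, the crux is to realize this count through mutations compatible with the $\mu_n$-linearization.
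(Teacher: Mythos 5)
Your overall strategy is, in outline, exactly the paper's: start from the Kuznetsov--Perry decomposition $\Db(X)^{\mu_n}=\langle f_0^*\Db(Y),\, j_{0*}\Db(Z),\dots,j_{n-2*}\Db(Z)\rangle$ (your root-stack decomposition), mutate until the linear tail $\sB_X^{[0,n-1]}([0,M-1])$ sits on the right, and compare with the equivariantization $\Db(X)^{\mu_n}=\langle \sA_X^{\mu_n},\sB_X^{[0,n-1]}([0,M-1])\rangle$ of $\Db(X)=\langle\sA_X,\sB_X,\dots,\sB_X(M-1)\rangle$ (your first paragraph; Proposition \ref{prop:sod_1} in the paper). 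The problem is that the proposal stalls precisely where you flag ``the delicate and genuinely hard part'': the functors $\Phi_j$ are never actually defined, the mutations are never computed, and the generation statement you need is deferred to bookkeeping you do not carry out. As it stands this is a plan for a proof, not a proof; the numerical Grothendieck-group check fixes ranks but of course cannot produce the semiorthogonal decomposition.

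The single missing ingredient is Lemma \ref{lem:right_mutation_lemma} (= \cite[Lemma 6.1]{kuznetsov2017derived}): $\bR_{j_{k*}\Db(Z)}(\sB_X^k(t))=\sB_X^{k+1}(t-d)$. This identity makes the per-character bookkeeping mechanical, and it shows that your stated worry --- that one must control correction terms by knowing ``how the restricted collection $\sB|_Z$ sits inside each $\Db(Z)$'' --- is misdirected: no restriction to $Z$ enters, because the mutation triangle is governed by the structure-sheaf sequence of the ramification divisor, whose ideal sheaf is $\sO_X(-d)$ up to a character twist. Granting the lemma, the argument closes as follows: split $\sB_X^0([0,m-1])$ into a block of length $M$ followed by $n-1$ blocks of length $d$; right-mutate the $i$-th length-$d$ block through the first $i$ copies of $\Db(Z)$, each pass raising the character by one and lowering the twist by $d$, yielding blocks $\sB_X^k([M-d,M-1])$; use $0<M<d$ to split each as $\langle \sB_X^k([M-d,-1]),\sB_X^k([0,M-1])\rangle$; use $j_{l*}(-)\otimes\rho_k=j_{l+k*}$ to identify $\langle j_{k*}\Db(Z),\sB_X^{k+1}([M-d,-1])\rangle$ with $\sA_Z\otimes\rho_k$; and finally left-mutate these past the nonnegative blocks, which is what defines $\Phi_k=\bL_{\sB_X^{[0,k]}([0,M-1])}(-\otimes\rho_k)$. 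Note also that full faithfulness of the $\Phi_k$ is then automatic, since mutating a component of a semiorthogonal decomposition past adjacent admissible subcategories is an equivalence onto its image; the direct $\Hom$-computations you propose for this step are unnecessary. Without Lemma \ref{lem:right_mutation_lemma}, or an independent proof of it, your mutation scheme cannot be executed.
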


In the next part of the paper, we study the topological $\rK $-theory of $\sA_Z$, and its associated Hodge structure. For this, we use Blanc's topological $\rK $-theory for categories \cite{blanc2016topological} and the non-commutative Hodge theory methods of \cite{perry2022integral} and \cite{DHLncHDR}, slightly extending their results to admissible subcategories of global quotient DM stacks (Proposition \ref{prop:HS_C_in_Stack}). 

We apply this to the case of very general three-dimensional double covers of weighted projective space. Here, we can make a direct comparison between $\rK _0^\top(\sA_Z)$ and $H^*(Z,\bZ)$, and show that primitive cohomology of $Z$ coincides with the part $\rK_0(\sA_X^{\mu_2})^\perp$ of the topological $\rK $-theory of $\sA_X^{\mu_2}$ which is orthogonal to the algebraic classes. 

\begin{proposition}[{= Proposition \ref{prop:hs_pres_other}}] \label{intro:KalgPerp_Hprim}
Suppose that $X$ is a very general prime double cover of weighted projective three-dimensional space, with $0<M\leq d$. Then $\rK _0(\sA_X^{\mu_2})^\perp$ is equipped with a Hodge structure such that 
the Chern character map induces a Hodge isometry \[\rK_0(\sA_X^{\mu_2})^\perp \simeq H_\prim^2(Z, \bZ).\] 
\end{proposition}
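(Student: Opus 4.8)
The plan is to reduce the statement to an analysis of $\sA_Z$ and its topological $\rK$-theory, and then to match the resulting non-commutative Hodge structure with the primitive cohomology of the branch surface $Z$. First I would specialize Theorem \ref{thm:1.1} to the case $n=2$. Since the index range $j=0,\dots,n-2$ collapses to the single value $j=0$, the theorem gives an equivalence $\sA_X^{\mu_2}=\Phi_0(\sA_Z)\simeq \sA_Z$ whenever $0<M<d$ (the boundary case $M=d$ falls into the complementary Kuznetsov--Perry regime, where $\sA_Z$ recovers $\Db(Z)$ and the same cohomological analysis applies verbatim). Applying Blanc's topological $\rK$-theory, which is additive over semiorthogonal decompositions and invariant under equivalences, together with the non-commutative Hodge theory of Proposition \ref{prop:HS_C_in_Stack}, this identification is compatible with both the Euler pairing and the Hodge structure. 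Hence it suffices to produce the Hodge structure on $\rK_0(\sA_Z)^\perp$ and the Hodge isometry $\rK_0(\sA_Z)^\perp\simeq H^2_\prim(Z,\bZ)$.

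Next I would exploit the definition \eqref{eq:A_Z_definition}, according to which $\sA_Z$ contains $\Db(Z)$ with complement generated by a finite collection of exceptional objects. Taking topological $\rK$-theory along this semiorthogonal decomposition yields $\Ktop[0](\sA_Z)\cong \Ktop[0](Z)\oplus L$, where $L$ is the algebraic sublattice spanned by the Chern characters of the exceptional generators. For the surface $Z$, the Chern-character identification reads $\Ktop[0](Z)\otimes\bQ\cong \bigoplus_i H^{2i}(Z,\bQ)$, under which the algebraic classes correspond to $H^0\oplus(\text{algebraic part of }H^2)\oplus H^4$. The orthogonal complement with respect to the Euler pairing, which matches up to sign the intersection pairing, is therefore carried by $H^2(Z)$, and I would identify it with $\Pic(Z)^\perp\subset H^2(Z,\bZ)$. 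The role of the \emph{very general} hypothesis is to pin down this transcendental lattice: by a Noether--Lefschetz-type argument, for very general $X$ — equivalently, very general branch divisor $Z$ — the group $\Pic(Z)$ is generated by the restriction of the polarization $\sO_Y(1)$, so that $\Pic(Z)^\perp=H^2_\prim(Z,\bZ)$.

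Finally, I would verify that the comparison is a Hodge isometry. The weight-$0$ non-commutative Hodge structure produced by Proposition \ref{prop:HS_C_in_Stack} on $\Ktop[0](\sA_Z)$ matches, through Blanc's topological Chern character and its compatibility with the classical one as in \cite{perry2022integral, DHLncHDR}, the weight-$2$ Hodge structure on $H^2(Z,\bZ)$ after the appropriate Tate twist; restricting to the orthogonal complements of the algebraic classes then gives the claimed Hodge isometry onto $H^2_\prim(Z,\bZ)$, with the Euler pairing corresponding to the polarized intersection form.

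The main obstacle is twofold. One must (i) correctly track the weights and the Tate twist identifying the weight-$0$ non-commutative Hodge structure on the $\rK$-theory side with the weight-$2$ structure on $H^2_\prim(Z)$, and check that the topological Chern character is a morphism of Hodge structures — this is precisely where Proposition \ref{prop:HS_C_in_Stack}, extending \cite{perry2022integral,DHLncHDR} to the global quotient stack $[X/\mu_2]$, is essential; and (ii) upgrade the rational comparison to an \emph{integral} isometry, for which one needs the relevant integral cohomology to be torsion-free and the algebraic sublattice $L$ to be saturated inside $\Ktop[0](\sA_Z)$, so that $\ch$ induces an isomorphism of integral Hodge structures rather than merely a rational one.
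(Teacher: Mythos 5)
Your proposal is correct and is essentially the paper's own argument: split $\rK_0^\top(\sA_X^{\mu_2})$ via Theorem \ref{thm:main} as in \eqref{eq:Ktop(AXm)}, compute the orthogonal of the algebraic classes inside $\rK_0^\top(\Db(Z))$ by Riemann--Roch, use Picard rank $1$ of $Z$ (the very-generality hypothesis) to identify it with $H^2_\prim(Z,\bZ)$, and carry the Hodge structure through Proposition \ref{prop:HS_C_in_Stack}. The one assertion to tighten is that the Euler pairing ``matches up to sign the intersection pairing'': this is false on all of $\rK_0^\top(Z)$ because of Todd-class corrections, and holds only on classes of the form $(0,\ch_1,0)$, which is exactly why the paper (Lemma \ref{lem:orthog_to_equiv_Kuznetsov_component_contain_in_primitive}) first runs the explicit Hirzebruch--Riemann--Roch computation against $[\sO_z]$, $[\sO_h]$, $[\sO_Z]$ and the effective divisors appearing in $\td_1(Z)$ --- using that $\td_1(Z)$ is algebraic --- to show the orthogonal complement consists precisely of such classes.
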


A landmark theorem by Bondal--Orlov \cite{bondal2001reconstruction} states that the derived category of a Fano variety $X$ determines $X$ up to isomorphism. This justifies the question of whether $\sA_X$ suffices to determine $X$. When this is the case we say that a \textit{categorical Torelli theorem} holds for $X$. Motivated by these questions, we apply Proposition \ref{intro:KalgPerp_Hprim} to reconstruct primitive cohomology from categorical equivalences:

\begin{theorem}[= Theorem {\ref{thm:main_hodge}}]\label{thm:1.3}
    Let $X$ and $X'$ be prime double covers of a weighted projective three-dimensional space, with $X$ very general, $d=d'$ and $0<M\leq d$. A Fourier--Mukai type equivalence $\Phi^{\mu_2} \colon \sA_X^{\mu_2} \to \sA_{X'}^{\mu_2}$ induces a Hodge isometry
    \begin{equation*}H^{2}_\prim(Z, \bZ) \simeq H^{2}_\prim(Z', \bZ).
    \end{equation*}
\end{theorem}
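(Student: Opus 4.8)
The plan is to realize the desired map as a composition of three Hodge isometries
\[
H^{2}_\prim(Z,\bZ)\ \simeq\ \rK_0(\sA_X^{\mu_2})^\perp\ \xrightarrow{\ \phi\ }\ \rK_0(\sA_{X'}^{\mu_2})^\perp\ \simeq\ H^{2}_\prim(Z',\bZ),
\]
where the outer two isometries come from Proposition \ref{prop:hs_pres_other} applied to $X$ and to $X'$, and the middle arrow $\phi$ is the map on topological $\rK$-theory induced by the equivalence $\Phi^{\mu_2}$. Since a composition of Hodge isometries is again a Hodge isometry, it suffices to produce each of the three and to verify that $\phi$ respects the orthogonal complements.

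First I would construct $\phi$. Because $\Phi^{\mu_2}$ is of Fourier--Mukai type, Blanc's topological $\rK$-theory \cite{blanc2016topological} is functorial for it and produces an isomorphism $\phi\colon \rK_0^\top(\sA_X^{\mu_2})\xrightarrow{\sim}\rK_0^\top(\sA_{X'}^{\mu_2})$. By the non-commutative Hodge theory of \cite{perry2022integral,DHLncHDR}, in the global-quotient DM stack form recorded in Proposition \ref{prop:HS_C_in_Stack}, both sides carry integral Hodge structures and the Euler pairing; as the Fourier--Mukai kernel defines an algebraic class, $\phi$ is a morphism of Hodge structures compatible with the Euler forms, hence a Hodge isometry. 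Since an equivalence carries objects to objects, $\phi$ maps the algebraic subgroup $\rK_0(\sA_X^{\mu_2})$ isomorphically onto $\rK_0(\sA_{X'}^{\mu_2})$, so it preserves their orthogonal complements and restricts to a Hodge isometry $\phi\colon \rK_0(\sA_X^{\mu_2})^\perp\xrightarrow{\sim}\rK_0(\sA_{X'}^{\mu_2})^\perp$.

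The left isometry is then immediate from Proposition \ref{prop:hs_pres_other}, as $X$ is very general. The right identification is more delicate, since $X'$ is not assumed very general. Here I would use that $\phi$ preserves the rank of $\rK_0^\top$ and of its algebraic subgroup, so that $\rk\,\rK_0(\sA_{X'}^{\mu_2})^\perp=\rk\,\rK_0(\sA_X^{\mu_2})^\perp=\rk\,H^{2}_\prim(Z,\bZ)$. Because $d=d'$ and $Z,Z'$ are prime divisors of the same degree in the same weighted projective space, they are deformation equivalent, so $b_2(Z)=b_2(Z')$ and $\rk\,H^2_\prim(Z)=\rk\,H^2_\prim(Z')$. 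The Chern-character construction underlying Proposition \ref{prop:hs_pres_other} identifies $\rK_0(\sA_{X'}^{\mu_2})^\perp$ isometrically with the transcendental lattice $\mathrm{NS}(Z')^{\perp}\subseteq H^2_\prim(Z',\bZ)$, of rank $b_2(Z')-\rho(Z')$; the rank equality forces $\rho(Z')=1$, whence $\mathrm{NS}(Z')^{\perp}=H^2_\prim(Z',\bZ)$ and the right isometry follows. Composing the three maps gives the theorem.

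The main obstacle is twofold. The technical backbone is the integrality and functoriality of $\phi$: one must know that Blanc's lattice and the associated Hodge structure are functorial for Fourier--Mukai kernels in the $\mu_2$-equivariant, stacky setting, which is exactly what Proposition \ref{prop:HS_C_in_Stack} provides. The more substantive point is transferring genericity from $X$ to $X'$: one must confirm that the very-general hypothesis enters Proposition \ref{prop:hs_pres_other} only through the minimality of the Picard rank of $Z$, so that the rank-matching argument genuinely upgrades the right-hand comparison from an isometric embedding into $H^2_\prim(Z',\bZ)$ to an isomorphism onto all of it.
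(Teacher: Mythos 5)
Your proposal follows the same route as the paper's proof: both realize the isometry as a composition
$H^2_\prim(Z,\bZ) \simeq \rK_0(\sA_X^{\mu_2})^\perp \to \rK_0(\sA_{X'}^{\mu_2})^\perp \hookrightarrow H^2_\prim(Z',\bZ)$,
where the middle map comes from functoriality of Blanc's topological K-theory applied to the Fourier--Mukai type equivalence (this is exactly Lemma \ref{lem:lift} and Proposition \ref{prop:k_theory_hs_preserved}; note the paper gets Hodge-compatibility from the motivic invariance of the noncommutative Hodge-to-de Rham data attached to a dg-lift, not from algebraicity of the kernel's Chern character as you suggest), the left map is Proposition \ref{prop:hs_pres_other} (using that $X$ is very general), and the right map is the isometric embedding of Lemma \ref{lem:orthog_to_equiv_Kuznetsov_component_contain_in_primitive} (which needs only primality of $X'$). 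The two arguments diverge only in how surjectivity of the right-hand embedding is established. The paper compares ranks: $Z$ and $Z'$ lie in the same linear series, so $b_2(Z)=b_2(Z')$, hence the composite isometric embedding relates primitive lattices of equal rank (and, implicitly, equal discriminant, the surfaces being deformation equivalent), forcing it to be onto. You instead claim that the image of $\rK_0(\sA_{X'}^{\mu_2})^\perp$ is exactly the transcendental lattice $\mathrm{NS}(Z')^\perp$, so that rank equality forces $\rho(Z')=1$ and the image is all of $H^2_\prim(Z',\bZ)$.

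Your variant is correct in substance, but it rests on a claim that the paper's results do not supply: Lemma \ref{lem:orthog_to_equiv_Kuznetsov_component_contain_in_primitive} gives an isometric embedding into $H^2_\prim(Z',\bZ)$, not an isomorphism onto $\mathrm{NS}(Z')^\perp$. To use your argument you must verify both inclusions: (i) the image lands in $\mathrm{NS}(Z')^\perp$, which follows by running the Hirzebruch--Riemann--Roch computation of that lemma against $\sO_C$ for curve classes $C$ generating $\mathrm{NS}(Z')$, not only against $\sO_h$ and the components of $\td_1(Z')$; and (ii) every class $(0,c,0)$ with $c\in \mathrm{NS}(Z')^\perp$ both lies in $\ch(\rK_0^\top(\Db(Z')))$ and is orthogonal to all algebraic classes --- the orthogonality is again the same HRR computation, while topological realizability needs $c^2$ to be even, which holds by Wu's formula because $c\cdot K_{Z'}=0$. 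These are short verifications using tools already present in the paper, so this is an omitted check rather than a false step; once supplied, your transfer of genericity from $X$ to $X'$ (forcing $\rho(Z')=1$) is a clean alternative to the paper's rank count, and it makes explicit exactly what you identified as the substantive point: the very-general hypothesis enters Proposition \ref{prop:hs_pres_other} only through $\rho(Z)=1$, which is indeed how the paper uses it.
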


Combining Theorem \ref{thm:1.3} with Hodge-theoretic Torelli theorems \cite{donagi1983generic, saito1986weak}, we prove new categorical Torelli theorems for two families of prime Fano threefolds, namely for threefolds $X_2$ of index $1$ and genus $2$, and for threefolds $Y_1$ of index $2$ and degree 1. In the case of $Y_1$, known as the \textit{Veronese double cone}, we impose a natural condition on the equivalence (we comment on this in Section \ref{sec:intro_related}). This was the only remaining open case for index 2.

\begin{theorem}[{= Theorem \ref{thm:X_2}}] \leavevmode
Let $X, X'$ be index $1$, genus $2$ prime Fano threefolds, with $X$ very general. Suppose there is a Fourier--Mukai type equivalence of Kuznetsov components $\sA_{X} \simeq \sA_{X'}$. Then $X \simeq X'$.
\end{theorem}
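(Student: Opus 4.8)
The plan is to present $X$ and $X'$ as double covers of $\bP^3$, fit them into the framework of the introduction, and then reduce the categorical statement to a Hodge-theoretic Torelli theorem for the branch surfaces. First I would recall the geometry: an index $1$, genus $2$ prime Fano threefold is a double cover $\pi\colon X\to\bP^3$ ramified along a sextic surface $Z\in|\sO_{\bP^3}(6)|$. Thus we are in the setting above with $Y=\bP^3$, $\sB=\langle\sO_{\bP^3}\rangle$, $m=4$, $n=2$ and $nd=6$, so that $d=3$ and $M=m-(n-1)d=1$; in particular $0<M\le d$, so that Theorem \ref{thm:1.3} is available. Under this identification $\sB_X=\langle\sO_X\rangle$ and $\Db(X)=\langle\sA_X,\sO_X\rangle$ is the standard index $1$ decomposition, with $\mu_2$ acting through the covering involution $\tau$.

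The crucial step is to promote the given equivalence $\Phi\colon\sA_X\to\sA_{X'}$ to a $\mu_2$-equivariant one $\Phi^{\mu_2}\colon\sA_X^{\mu_2}\to\sA_{X'}^{\mu_2}$. For this it suffices to show that $\Phi$ intertwines the covering involutions, i.e.\ that the involution $\Psi:=\Phi^{-1}\circ(\tau')^{*}\circ\Phi$ of $\sA_X$ is isomorphic to $\tau^*$. Here I would use that, for $X$ very general, the covering involution is characterized intrinsically inside $\sA_X$ --- for instance as the unique nontrivial involutive autoequivalence of its type, compatible with the Serre functor and with a prescribed action on $\rK_0(\sA_X)$ --- so that the rigidity of $\Aut(\sA_X)$ forces $\Psi\cong\tau^*$. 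This is exactly the point at which the case $X_2$ differs from the Veronese double cone $Y_1$, where such an intrinsic characterization is unavailable and an extra hypothesis on the equivalence must be imposed. Once $\Phi$ is $\mu_2$-equivariant it descends to the equivariant categories, and the order-$2$ case keeps the bookkeeping of linearizations under control while preserving the Fourier--Mukai property.

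With $\Phi^{\mu_2}$ in hand, Theorem \ref{thm:1.3} directly yields a Hodge isometry $H^2_\prim(Z,\bZ)\simeq H^2_\prim(Z',\bZ)$ between the primitive cohomologies of the two branch sextics. I would then invoke the Hodge-theoretic Torelli theorem for sextic surfaces in $\bP^3$ \cite{donagi1983generic,saito1986weak}: since $X$, and hence $Z$, is very general, the generic Torelli theorem applies, and a polarized Hodge isometry forces $Z$ and $Z'$ to be projectively equivalent, i.e.\ related by an element of $\PGL_4$. Finally, a double cover of $\bP^3$ is reconstructed from its branch divisor $Z\in|\sO_{\bP^3}(6)|$ together with the canonical square root $\sO_{\bP^3}(3)$, so the projective equivalence $Z\cong Z'$ lifts to an isomorphism $X\simeq X'$.

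I expect the equivariant lifting of the second paragraph to be the main obstacle: one must pin down $\tau^*$ intrinsically and sufficiently constrain $\Aut(\sA_X)$ for very general $X$, and then verify that the descent to $\mu_2$-fixed categories is compatible with the Fourier--Mukai property needed to feed into Theorem \ref{thm:1.3}. A secondary point requiring care is that the isometry produced by Theorem \ref{thm:1.3} is genuinely a \emph{polarized} Hodge isometry, carrying the relevant polarization class correctly, so that the generic Torelli theorem can indeed be triggered rather than merely an abstract isometry of Hodge structures.
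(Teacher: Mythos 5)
Your overall route (lift $\Phi$ to the $\mu_2$-equivariant categories, apply Theorem \ref{thm:main_hodge} to get a Hodge isometry of $H^2_\prim$, invoke Donagi's generic Torelli theorem for the sextic branch surfaces, and reconstruct the double covers from the branch divisors) is exactly the paper's, and your numerical setup ($m=4$, $n=2$, $d=3$, $M=1$) is correct. However, the step you yourself flag as ``the main obstacle'' --- showing that $\Phi$ intertwines the covering involutions --- is a genuine gap as you propose to handle it. You appeal to an ``intrinsic characterization'' of the involution $\tau$ as the unique nontrivial involutive autoequivalence of its type, and to ``rigidity of $\Aut(\sA_X)$'' for very general $X$; no such characterization or rigidity statement is known (the autoequivalence group of $\sA_X$ for these threefolds is not understood), and nothing in the paper or the literature it cites supplies one. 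As written, this step cannot be carried out.

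The paper's mechanism is different, entirely concrete, and needs no genericity at this stage. By Proposition \ref{prop:serre_rotation_tau_relations} one has $\sfR^d\simeq\tau[1]$ and $\sfS^{-1}_{\sA_X}\simeq\sfR^{M}[-m+1]$; since $M=1$ divides $d=3$ (an odd multiple of $d$), writing $d(2b+1)=Ma$ gives, up to shift,
\[
\tau=\tau^{2b+1}\simeq \sfR^{d(2b+1)}=\sfR^{Ma}\simeq \sfS_{\sA_X}^{-a},
\]
i.e.\ $\tau$ is a power of the Serre functor up to shift (Lemma \ref{lem:Mdivides}). Since \emph{any} equivalence commutes with Serre functors and shifts, $\Phi\circ\tau\simeq\tau'\circ\Phi$ holds automatically; this is also precisely why the argument fails for the Veronese double cone, where $M=2$ divides no odd multiple of $d=3$, forcing the extra hypothesis in Theorem \ref{thm:Y_1} --- so your intuition about where the two cases diverge is right, but the dividing line is this divisibility condition, not the availability of a rigidity theorem. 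The descent to equivariant categories is then Lemma \ref{lem:eq_equivalence_descends}, whose content is not mere bookkeeping: one must lift the intertwining of $1$-categorical actions to $2$-categorical ones, which the paper does via uniqueness of such lifts, coming from the vanishing $H^2(B\bZ/2,\bC^\times)=0$. The remainder of your argument (Theorem \ref{thm:main_hodge}, Donagi's theorem, and the reconstruction of $X$ from $Z$, which is Lemma \ref{lem:ram_div_iso} and does use very generality to identify the polarizations) matches the paper.
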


\begin{theorem}[= Theorem \ref{thm:Y_1}]\label{thm:1.5}
    Let $X,X'$ be Veronese double cones with $X$ very general. Suppose that we have an equivalence $\Phi\colon\sA_X \xrightarrow{\sim} \sA_{X'}$ of Fourier--Mukai type which commutes with the covering involution.
    Then $X \simeq X'$.
\end{theorem}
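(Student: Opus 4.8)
The plan is to reduce this to the Hodge-theoretic statement already established in Theorem~\ref{thm:main_hodge} and then to invoke a classical Torelli theorem for the branch surface. Recall that a Veronese double cone $X$ is the double cover of $Y = \mathbb{P}(1,1,1,2)$ (the cone over the Veronese surface) branched along a quasi-smooth surface $Z \in |\sO_Y(6)|$; in the notation of the paper this is the case $n = 2$, $d = 3$, and one checks by adjunction that $Z$ is a surface of general type, since $K_Y = \sO_Y(-5)$ gives canonical bundle $K_Z = \sO_Y(1)|_Z$, which is ample. In particular the numerical condition $0 < M \leq d$ required by Theorem~\ref{thm:main_hodge} holds.

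First I would promote the given equivalence to an equivariant one. By hypothesis $\Phi\colon \sA_X \to \sA_{X'}$ is of Fourier--Mukai type and commutes with the covering involution $\tau$, i.e.\ $\Phi \circ \tau_* \simeq \tau'_* \circ \Phi$. This compatibility is exactly what is needed to descend $\Phi$ to a Fourier--Mukai equivalence of the equivariant Kuznetsov components $\Phi^{\mu_2}\colon \sA_X^{\mu_2} \xrightarrow{\sim} \sA_{X'}^{\mu_2}$; this is the role of the ``natural condition'', as without it there is no reason for an equivariant lift to exist.

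Next I would apply Theorem~\ref{thm:main_hodge} verbatim: since $X$ is very general and $0 < M \leq d$, the equivalence $\Phi^{\mu_2}$ induces, through the Chern character, a Hodge isometry $H^2_\prim(Z, \bZ) \simeq H^2_\prim(Z', \bZ)$ of the primitive cohomology of the two branch surfaces. Finally I would invoke the weak (generic) Torelli theorem for weighted projective hypersurfaces of general type \cite{saito1986weak} (the weighted analogue of Donagi's theorem \cite{donagi1983generic}): for a very general such $Z$, an isometry of polarized primitive Hodge structures forces $Z \simeq Z'$. It then remains to recover the threefold: I would argue that the isomorphism of polarized surfaces extends to an automorphism of the ambient weighted space $Y$ carrying $Z$ to $Z'$, and since $\Pic(Y) \simeq \bZ$ is torsion-free, the line bundle $\sO_Y(3)$ defining the double cover is the unique square root of $\sO_Y(Z)$; hence an isomorphism of pairs $(Y, Z) \simeq (Y, Z')$ lifts canonically to $X \simeq X'$.

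The step I expect to be the main obstacle is this last transfer from Hodge theory back to geometry. Two points require care: (i) confirming that the branch surface $Z$ meets the numerical hypotheses under which Saito's theorem is an honest isomorphism statement, and that ``very general'' places $X$ in the locus where the period map is injective rather than merely generically finite; and (ii) checking that the abstract isomorphism $Z \simeq Z'$ supplied by Torelli respects the embedding into $Y$---that is, is compatible with the polarization $\sO_Y(1)|_Z = K_Z$ and hence extends to the ambient space---so that the double-cover structure, and not merely the surface, is reconstructed. By contrast, the equivariant lifting and the appeal to Theorem~\ref{thm:main_hodge} are comparatively formal.
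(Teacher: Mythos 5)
Your proposal is correct and follows essentially the same route as the paper: descend $\Phi$ to an equivariant equivalence $\sA_X^{\mu_2} \simeq \sA_{X'}^{\mu_2}$ using the involution-compatibility (the paper's Lemma \ref{lem:eq_equivalence_descends}), apply Theorem \ref{thm:main_hodge} to obtain the primitive Hodge isometry, invoke Saito's generic Torelli theorem for degree $6$ hypersurfaces in $\bP(1,1,1,2)$ to conclude $Z \simeq Z'$, and then recover $X \simeq X'$ from the branch divisor. The two points you flag as delicate are precisely what the paper's Lemma \ref{lem:ram_div_iso} resolves: since $X$ is very general, $Z$ has Picard rank $1$, so any abstract isomorphism $Z \simeq Z'$ automatically matches the ample generators, identifies the graded rings of sections, and hence extends to the ambient weighted projective space, lifting to an isomorphism of the double covers.
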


Our methods also allow us to reprove categorical Torelli for quartic double solids, i.e.\ prime Fano threefolds of index $2$ and degree $2$ (see Theorem \ref{thm:Y_2}). 

\subsection{Related works}\label{sec:intro_related}
If we compare Theorem \ref{thm:1.1} and \cite[Theorem 1.1]{kuznetsov2017derived} in the case where the base $Y$ is weighted projective space, we observe that the cases $0<M<d$, $M=d$, and $M>d$ correspond to the branch divisor $Z$ being canonically polarized, $K$-trivial, or Fano (see Section \ref{relation to other works}).
This is parallel to the trichotomy evidenced by Orlov \cite[Theorem 3.11]{Orlov09_DerSing} and suggests a relation between $\sA_Z$ and categories of matrix factorizations in the hypersurface setting. 

In the case of hypersurfaces, Hirano and Ouchi proved a result analogous to Theorem \ref{thm:1.1} in the language of matrix factorizations \cite[Theorem 5.6]{hirano2023derived}. 
From a similar perspective, the remarkable paper \cite{BFK} investigates Hodge theoretic applications of the language of matrix factorizations. 
The same philosophy was recently applied to prove categorical Torelli theorems for hypersurfaces in $\bP^n$ \cite[Theorem 1.3]{lin2023serre}.
A modification of this also applies to $X_2$, regarded as a sextic hypersurface in $\bP(1,1,1,1,3)$ \cite[Theorem 6.1]{lin2023serre}. The upcoming work \cite{LPS23} treats this case with independent methods.

Categorical Torelli theorems have been studied for many varieties. We direct the reader to \cite{PSSurvey} for a survey. One possible approach to proving such theorems is to recover the Fano threefold from Bridgeland moduli spaces, as in \cite{bernardara2012categorical, APR19, pertusi2022some, bayer2022desingularization, feyzbakhsh2023new} (for prime Fano threefolds of index $2$ and degree $\geq 2$) and \cite{jacovskis2021categorical, jacovskis2022brill} (for prime Fano theefolds of index $1$ and genus $\geq 6$).
These methods use the fact that there is a unique $\GLcov_2^+(\bR)$-orbit of Serre invariant stability conditions to show that any equivalence preserves  moduli spaces of stable objects. However, for the Veronese double cone and  lower genus index $1$ Fano threefolds, it is not known (or even false, e.g. genus 4 \cite[Corollary 1.9]{kuznetsov2021serre}) that such a unique orbit exists. Methods relating Hochschild cohomology and Hodge theory are developed in \cite{huybrechts2016hochschild} and \cite{pirozhkov2022categorical} to prove categorical Torelli theorems for broad classes of hypersurfaces.

In \cite{jacovskis2021categorical}, a categorical Torelli theorem for special Gushel--Mukai threefolds $X$ is shown by using the fact that the equivariant Kuznetsov component of $X$ is equivalent (by \cite{kuznetsov2017derived}) to the derived category of the K3 surface that $X$ is ramified in. Because derived equivalent K3 surfaces of this type have only one trivial Fourier--Mukai partner, the ramification divisors are isomorphic, hence the special Gushel--Mukai threefolds are isomorphic.

The additional assumption in Theorem \ref{thm:1.5} is equivalent to requiring that the equivalence $\Phi$ commutes with the \emph{rotation functor} $\sfR(-) = \bL_{\sO}(- \otimes \sO_X(1))$, also called the \textit{degree-shift} functor in \cite{huybrechts2016hochschild} (see Remark \ref{rmk:torelliY1}). 
This assumption is natural: for example, it is necessary for a categorical Torelli theorem for cubic fourfolds \cite[Corollary 1.2]{huybrechts2016hochschild}. In fact, some special cubic fourfolds have non-trivial \emph{Fourier--Mukai partners}, i.e. non-isomorphic cubic fourfolds sharing equivalent Kuznetsov components. In \cite[Section 3.2]{Huy17_K3CatCubic}, Huybrechts showed that the number of (isomorphism classes of) Fourier--Mukai partners is finite, and that a very general cubic fourfold has no non-trivial Fourier--Mukai partners. The articles \cite{pertusi2021fourier,FL23} study counts of Fourier--Mukai partners of a cubic fourfold. Inspired by these results, and by a question of Huybrechts \cite[Question 3.25]{MSK3Lectures}, we formulate the following question.
    
    \begin{question} \leavevmode
    \begin{enumerate}
        \item Do Veronese double cones have non-trivial Fourier--Mukai partners?
        \item Do \emph{very general} Veronese double cones have non-trivial Fourier--Mukai partners?
        \item If (1) has a positive answer, then how many Fourier--Mukai partners are there up to isomorphism, and are they birational to one another?
    \end{enumerate}
    \end{question}

\subsection*{Structure of the paper}

After recalling some preliminaries and fixing notations in Section \ref{sec:preliminaries}, we dedicate Section \ref{sec:covers} to the study of semiorthogonal decompositions for cyclic covers and the proof of Theorem \ref{thm:1.1}. Section \ref{ssec:equivar_equivalences} is dedicated to topological K-theory and non-commutative Hodge theory. We apply these tools to the threefold setting in Section \ref{sec:doublesolids}. The last section (Section \ref{sec:torelli}) contains the categorical Torelli theorems.

\subsection*{Notation and conventions}

We work over the field of complex numbers $\bC$. Whenever we work with an arbitrary triangulated category $\sC$, it will be assumed to be proper and $\bC$-linear. For a projective variety (or a proper DM stack) $X$, we write $\mathrm{D}(X)$ (resp. $\Db(X)$, $\Dperf(X)$) for the derived category of (bounded, perfect) complexes of coherent sheaves on $X$.

\subsection*{Acknowledgements}

We thank Arend Bayer, Pieter Belmans, Wahei Hara, Alex Perry, and Sebastian Schlegel Mejia for helpful discussions. We thank Laura Pertusi, Paolo Stellari, and Shizhuo Zhang for informing us about their work on similar topics.

\section{Preliminaries on derived categories}\label{sec:preliminaries}

For background on triangulated categories and derived categories, we recommend \cite{huybrechts2006fourier}. Throughout this section, $\sC$ is a triangulated category. For objects $E,F \in \sC$, define 
\[ \Hom^\bullet_{\sC}(E,F) \coloneqq \bigoplus_{t \in \bZ} \Ext^t_{\sC}(E,F)[-t] . \]
We will omit the Hom subscripts when the category we are working in is clear from context.

\begin{definition}
    An object $E \in \sC$ is called \emph{exceptional} if $\Hom^\bullet(E, E) = \bC$.
\end{definition}

\begin{definition}
    We say $\sC = \langle \sA_1, \dots, \sA_n \rangle$ is a \emph{semiorthogonal decomposition} of $\sC$ if 
    \begin{enumerate}
        \item $\Hom^\bullet(F, G) = 0$ for all $F \in \sA_i, G \in \sA_j$ if $i > j$;
        \item for any $F \in \sC$, there exists a sequence of morphisms 
        \[ 0 = F_n \to F_{n-1} \to \cdots \to F_1 \to F_0 = F \]
        such that $\Cone(F_i \to F_{i-1}) \in \sA_i$.
    \end{enumerate}
\end{definition}

Let $i \colon \sA \hookrightarrow \sC$ be a full  triangulated subcategory. If the inclusion $i$ has a left adjoint $i^*$, then $\sA$ is called \emph{left admissible}. If $i$ has a right adjoint $i^!$, then $\sA$ is called \emph{right admissible}. If both adjoints exist, then $\sA$ is called \emph{admissible}.

Define the \emph{right orthogonal} $\sA^\bot$ to $\sA$ to be the subcategory
\begin{equation*}
    \sA^\bot \coloneqq \{ F \in \sC \mid \Hom^\bullet(G, F)=0 \, \text{for all} \, G \in \sA \}.
\end{equation*}
Similarly, define the \emph{left orthogonal} ${}^\bot \sA$ to $\sA$ to be the subcategory
\begin{equation*}
    {}^\bot \sA \coloneqq \{ F \in \sC \mid \Hom^\bullet(F, G)=0 \, \text{for all} \, G \in \sA \}.
\end{equation*}

Let $i \colon \sA \hookrightarrow \sC$ be an admissible subcategory. Let $i^*$ and $i^!$ be the left and right adjoint, respectively. For any $F \in \sC$, we define the \emph{left mutation functor} $\bL_{\sA} (F)$ by the triangle
\begin{equation} \label{eq:left_mutation_def}
    i i^! (F) \to F \to \bL_{\sA}(F) .
\end{equation}
Similarly, we define the \emph{right mutation functor} $\bR_{\sA}$(F) by the triangle
\begin{equation} \label{eq:right_mutation_def}
    \bR_{\sA}(F) \to F \to ii^*(F) .
\end{equation}
In particular, when $\sA$ is an exceptional object $E$, these triangles become
\begin{equation}
    \Hom^\bullet(E, F) \otimes E \to F \to \bL_{E}(F),
\end{equation}
and 
\begin{equation}
    \bR_E(F) \to F \to \Hom^\bullet(F, E)^\vee \otimes E,
\end{equation}
respectively.

\begin{proposition} \label{prop:serre_sod}
Let $\sC = \langle \sC_1 , \sC_2 \rangle$ be a semiorthogonal decomposition and let $\sfS_{\sC}$ be the Serre functor of $\sC$. Then 
\begin{equation*}
    \sC \simeq \langle \sfS_{\sC}(\sC_2) , \sC_1 \rangle \simeq \langle \sC_2, \sfS_{\sC}^{-1}(\sC_1) \rangle
\end{equation*}
are also semiorthogonal decompositions.
\end{proposition}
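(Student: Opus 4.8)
The plan is to realize each Serre-twisted summand as an orthogonal complement and then reassemble the decomposition using admissibility. Write $\sfS = \sfS_{\sC}$ and recall that the defining property of the Serre functor gives a natural isomorphism $\Hom^\bullet(E, \sfS F) \cong \Hom^\bullet(F, E)^\vee$ for all $E, F \in \sC$. Since $\sC = \langle \sC_1, \sC_2 \rangle$ is a semiorthogonal decomposition, we have $\Hom^\bullet(\sC_2, \sC_1) = 0$ together with the orthogonality identities $\sC_1 = \sC_2^\perp$ and $\sC_2 = {}^\perp \sC_1$. Because $\sC$ admits a Serre functor, both components are admissible (left admissibility and right admissibility coincide in the presence of $\sfS$), and the same persists for $\sfS(\sC_2)$ and $\sfS^{-1}(\sC_1)$, as $\sfS$ is an autoequivalence.

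First I would prove the identity $\sfS(\sC_2) = \sC_1^\perp$. The inclusion $\subseteq$ is immediate from Serre duality: for $G \in \sC_1$ and $F \in \sC_2$ one has $\Hom^\bullet(G, \sfS F) \cong \Hom^\bullet(F, G)^\vee = 0$, using $\Hom^\bullet(\sC_2, \sC_1) = 0$. For the reverse inclusion, take $H \in \sC_1^\perp$; then for every $G \in \sC_1$ Serre duality gives $\Hom^\bullet(\sfS^{-1} H, G) \cong \Hom^\bullet(G, H)^\vee = 0$, so $\sfS^{-1} H \in {}^\perp \sC_1 = \sC_2$, i.e. $H \in \sfS(\sC_2)$. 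An entirely parallel computation, with $\sfS^{-1}$ in place of $\sfS$, yields $\sfS^{-1}(\sC_1) = {}^\perp \sC_2$.

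With these identities in hand, the statement reduces to the standard fact that an admissible subcategory $\sA \subseteq \sC$ produces semiorthogonal decompositions $\sC = \langle \sA^\perp, \sA \rangle$ and $\sC = \langle \sA, {}^\perp \sA \rangle$, obtained from the mutation triangles \eqref{eq:left_mutation_def} and \eqref{eq:right_mutation_def}. Applying this to $\sA = \sC_1$ gives $\sC = \langle \sC_1^\perp, \sC_1 \rangle = \langle \sfS(\sC_2), \sC_1 \rangle$, and applying it to $\sA = \sC_2$ gives $\sC = \langle \sC_2, {}^\perp \sC_2 \rangle = \langle \sC_2, \sfS^{-1}(\sC_1) \rangle$, the two asserted decompositions. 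I expect the only real subtlety to be bookkeeping: keeping the handedness of the orthogonals and adjoints consistent with the convention of the definition, so that the vanishing $\Hom^\bullet(\sA_i, \sA_j) = 0$ for $i > j$ lands on the correct side, and ensuring that the admissibility needed to run the mutation triangles is genuinely available. This last point is exactly where the hypothesis that $\sC$ carries a Serre functor enters, since it guarantees that the one-sided admissibility of each summand coming from the decomposition is in fact two-sided.
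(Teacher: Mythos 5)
Your Serre-duality identifications $\sfS_{\sC}(\sC_2) = \sC_1^\perp$ and $\sfS_{\sC}^{-1}(\sC_1) = {}^\perp\sC_2$ are correct and cleanly proved, as is the reduction to the standard fact that a right (resp.\ left) admissible subcategory $\sA$ produces a decomposition $\langle \sA^\perp, \sA \rangle$ (resp.\ $\langle \sA, {}^\perp\sA \rangle$). The gap is the parenthetical claim that ``left admissibility and right admissibility coincide in the presence of $\sfS$.'' The hypothesis $\sC = \langle \sC_1, \sC_2 \rangle$ hands you only one adjoint on each side: $\sC_1$ is left admissible and $\sC_2$ is right admissible. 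Your argument needs precisely the two missing adjoints ($i_1^!$, to produce for each $F$ a triangle $c \to F \to s$ with $c \in \sC_1$, $s \in \sC_1^\perp$; and dually $i_2^*$), and the existence of $\sfS_{\sC}$ does not yield them by any formal manipulation: applying $\sfS_{\sC}$ to the decomposition triangles of $\langle \sC_1, \sC_2 \rangle$ only produces the translated decomposition $\langle \sfS_{\sC}(\sC_1), \sfS_{\sC}(\sC_2) \rangle$, in which \emph{both} components are twisted, never the mixed decompositions in the statement. Indeed, unwinding Serre duality shows that right admissibility of $\sC_1$ is \emph{equivalent} to $\sC_1$ itself carrying a Serre functor (via $i_1^! = \sfS_{\sC_1} \circ i_1^* \circ \sfS_{\sC}^{-1}$), which is essentially what one is trying to establish. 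So the parenthetical remark is not bookkeeping; it is the entire mathematical content, and in the stated generality (an arbitrary proper category with a Serre functor) it is not a formal consequence of the existence of $\sfS_{\sC}$ --- the known arguments pass through representability/saturatedness (Bondal--Kapranov, Bondal--Van den Bergh), an input you neither invoke nor have available from the hypotheses.

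The statement becomes correct, and your proof closes up, once one assumes --- as the standard references do, and as holds in every application in this paper, where $\sC$ sits inside $\Db$ of a smooth proper variety or stack and is therefore saturated --- that the components $\sC_1, \sC_2$ are admissible. Under that hypothesis the argument can even be streamlined so as to avoid mutation triangles altogether: if $\sC_2$ is admissible then so is $\sfS_{\sC}(\sC_2)$ (conjugate its adjoints by $\sfS_{\sC}$), hence $\sC = \langle \sfS_{\sC}(\sC_2), {}^\perp(\sfS_{\sC}(\sC_2)) \rangle$, and Serre duality gives
\begin{equation*}
{}^\perp\bigl(\sfS_{\sC}(\sC_2)\bigr) = \{ F \mid \Hom^\bullet(F, \sfS_{\sC} G) = 0 \ \text{for all } G \in \sC_2 \} = \{ F \mid \Hom^\bullet(G, F) = 0 \ \text{for all } G \in \sC_2 \} = \sC_2^\perp = \sC_1,
\end{equation*}
which is the first decomposition; the second follows symmetrically from $(\sfS_{\sC}^{-1}(\sC_1))^\perp = {}^\perp\sC_1 = \sC_2$. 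I recommend either adding the admissibility hypothesis (matching the convention of the literature this paper draws on) or explicitly citing the saturatedness results that make one-sided admissibility two-sided in the geometric setting.
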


\begin{proposition} \label{prop:serre_subcategory}
Let $\sC = \langle \sC_1, \sC_2 \rangle$. Then 
\[ \sfS^{-1}_{\sC_1} = \bL_{\sC_2} \circ S^{-1}_{\sC}  \quad \text{and} \quad \sfS_{\sC_2} = \bR_{\sC_1} \circ \sfS_{\sC}. \]
\end{proposition}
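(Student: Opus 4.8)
The plan is to rewrite both mutation functors as the two projection functors attached to the decomposition $\sC=\langle\sC_1,\sC_2\rangle$, and then verify each identity by exhibiting the defining bifunctorial duality isomorphism of a Serre functor on the relevant component, reducing everything to Serre duality in the ambient category together with the semiorthogonality $\Hom^\bullet(\sC_2,\sC_1)=0$. Note first that the adjoints needed to even define the functors in the statement exist for free: the semiorthogonal decomposition $\langle\sC_1,\sC_2\rangle=\langle\sC_2^\perp,\sC_2\rangle$ witnesses that $\sC_2$ is right admissible (so $j_2^{!}$ exists), and $\langle\sC_1,{}^\perp\sC_1\rangle=\langle\sC_1,\sC_2\rangle$ witnesses that $\sC_1$ is left admissible (so $j_1^{*}$ exists), where $j_1\colon\sC_1\hookrightarrow\sC$ and $j_2\colon\sC_2\hookrightarrow\sC$ are the inclusions.

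First I would record the canonical (functorial) decomposition triangle: for $F\in\sC$ the semiorthogonal decomposition produces a triangle $j_2 j_2^{!}F\to F\to j_1 j_1^{*}F$ with $j_2 j_2^{!}F\in\sC_2$ and $j_1 j_1^{*}F\in\sC_1$. Comparing this with the defining triangle \eqref{eq:left_mutation_def} of the left mutation, namely $j_2 j_2^{!}F\to F\to\bL_{\sC_2}F$, identifies $\bL_{\sC_2}\simeq j_1 j_1^{*}$; comparing with the defining triangle \eqref{eq:right_mutation_def} of the right mutation, namely $\bR_{\sC_1}F\to F\to j_1 j_1^{*}F$, identifies $\bR_{\sC_1}\simeq j_2 j_2^{!}$. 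In particular $\bL_{\sC_2}$ maps $\sC$ into $\sC_1$ and $\bR_{\sC_1}$ maps $\sC$ into $\sC_2$, so the composites in the statement do land in the asserted components, and their Hom-spaces may be computed in $\sC$.

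For the first identity I set $T\coloneqq\bL_{\sC_2}\circ\sfS_\sC^{-1}$ and show $T\simeq\sfS_{\sC_1}^{-1}$ by checking the characterizing isomorphism $\Hom^\bullet_{\sC_1}(TB,A)\simeq\Hom^\bullet_{\sC_1}(A,B)^\vee$, bifunctorially in $A,B\in\sC_1$. Applying $\Hom^\bullet_\sC(-,A)$ to the mutation triangle $j_2 j_2^{!}\sfS_\sC^{-1}B\to\sfS_\sC^{-1}B\to\bL_{\sC_2}\sfS_\sC^{-1}B$ and using that $A\in\sC_1=\sC_2^\perp$ kills the term $\Hom^\bullet_\sC(j_2 j_2^{!}\sfS_\sC^{-1}B,A)$, one gets $\Hom^\bullet_\sC(\bL_{\sC_2}\sfS_\sC^{-1}B,A)\simeq\Hom^\bullet_\sC(\sfS_\sC^{-1}B,A)$; Serre duality in $\sC$ then gives $\Hom^\bullet_\sC(\sfS_\sC^{-1}B,A)\simeq\Hom^\bullet_\sC(A,B)^\vee$, and since $A,B\in\sC_1$ these agree with the Hom-spaces computed in $\sC_1$. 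Uniqueness of the Serre functor yields $T\simeq\sfS_{\sC_1}^{-1}$.

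The second identity is the mirror argument: with $T'\coloneqq\bR_{\sC_1}\circ\sfS_\sC$ I verify $\Hom^\bullet_{\sC_2}(B',T'B)\simeq\Hom^\bullet_{\sC_2}(B,B')^\vee$ for $B,B'\in\sC_2$, now applying $\Hom^\bullet_\sC(B',-)$ to the triangle $\bR_{\sC_1}\sfS_\sC B\to\sfS_\sC B\to j_1 j_1^{*}\sfS_\sC B$ and using $B'\in\sC_2={}^\perp\sC_1$ to annihilate the $\sC_1$-term, followed by Serre duality in $\sC$. The only points requiring care, and where an error is easiest, are the bookkeeping of variances (applying $\Hom^\bullet(-,A)$ versus $\Hom^\bullet(B',-)$ so that the correct orthogonality $\sC_2^\perp$ or ${}^\perp\sC_1$ is the one that kills the unwanted term) and checking that the duality isomorphisms are natural in both variables, which is what licenses the appeal to uniqueness of the Serre functor. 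There is no substantive obstacle beyond this routine but variance-sensitive bookkeeping.
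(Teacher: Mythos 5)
Your proof is correct. Note, however, that the paper offers no proof of this proposition at all: it is stated in the preliminaries as standard background (the result goes back to Bondal--Kapranov and is used routinely in Kuznetsov's papers), so there is no argument in the paper to compare against; your write-up supplies exactly the standard argument. Specifically: the identifications $\bL_{\sC_2}\simeq j_1 j_1^{*}$ and $\bR_{\sC_1}\simeq j_2 j_2^{!}$ via the decomposition triangle are right, the variance bookkeeping is right (in both computations the term that dies is a $\Hom$ from an object of $\sC_2$ to an object of $\sC_1$, which is the one vanishing an SOD provides), and the reduction to Serre duality in $\sC$ is clean. One small point worth making explicit: your closing appeal to ``uniqueness of the Serre functor'' is really an appeal to Yoneda, and it tacitly presupposes that $\sfS_{\sC_1}$ and $\sfS_{\sC_2}$ exist. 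In the paper's conventions (proper $\bC$-linear categories, so admissible components of a category with a Serre functor again have Serre functors) this is harmless; if one wanted the proposition to also \emph{prove} existence, one would additionally have to check that the functors $\bL_{\sC_2}\circ\sfS_{\sC}^{-1}$ and $\bR_{\sC_1}\circ\sfS_{\sC}$ you construct are equivalences, which your bifunctorial isomorphisms almost, but not quite, give for free.
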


\subsection{Equivariant triangulated categories} \label{sec:equivariant_triangulated_categories}

The definitions of a group action on a category and the corresponding equivariant category are due to Deligne \cite{deligne1997action}. The following background can be found in \cite[Section 3]{kuznetsov2017derived}, which we follow. Let $G$ be a finite group and $\sC$ a triangulated category.

\begin{definition} \label{def:categorical_action}
A \emph{(right) action} of $G$ on $\sC$ is given by the following data:
\begin{enumerate}
    \item for each $g \in G$, an autoequivalence $g^* \colon \sC \to \sC$;
    \item for each pair $g, h \in G$, a natural isomorphism $c_{g,h} \colon (gh)^* \xrightarrow{\sim} h^* \circ g^*$ such that
    \[\begin{tikzcd}
	{(fgh)^*} & {h^* \circ (fg)^*} \\
	{(gh)^*\circ f^*} & {h^* \circ g^* \circ f^*}
	\arrow["{c_{fg,h}}", from=1-1, to=1-2]
	\arrow["{c_{f,gh}}"', from=1-1, to=2-1]
	\arrow["{c_{g,h}f^*}", from=2-1, to=2-2]
	\arrow["{h^*c_{f,g}}", from=1-2, to=2-2]
\end{tikzcd}\]
commutes for all $f,g,h \in G$.
\end{enumerate}
\end{definition}

\begin{definition}
    A \emph{$G$-equivariant object} of $\sC$ is a pair $(F, \phi)$ consisting of an object $F \in \sC$ and a collection of isomorphisms $\phi_g \colon F \xrightarrow{\sim} g^* (F)$ for all $g \in G$ such that the diagram
    \[\begin{tikzcd}
	F & {h^*(F)} & {h^*(g^*(F))} \\
	&& {(gh)^*(F)}
	\arrow["{h^*(\phi_g)}", from=1-2, to=1-3]
	\arrow["{c_{g,h}(F)}"', from=2-3, to=1-3]
	\arrow["{\phi_{gh}}"', from=1-1, to=2-3]
	\arrow["{\phi_h}", from=1-1, to=1-2]
\end{tikzcd}\]
commutes for all $g,h \in G$. The isomorphisms $\phi=\{\phi_g\}_{g\in G}$ are called the \emph{$G$-linearisation}. The \emph{$G$-equivariant category} $\sC^G$ of $\sC$ is the category whose objects are the $G$-equivariant objects of $\sC$, and morphisms are those between $G$-invariant objects of $\sC$ that commute with the $G$-linearisations. 
\end{definition}

\begin{example}[{\cite[p. 12]{elagin2014equivariant}}]
    Suppose $G$ is a finite abelian group. Let $\widehat{G}=\Hom(G,\bC^\times)$ be the group of irreducible representations of $G$. Then there is an action of $\widehat{G}$ on $\sC^G$ given as follows. For every $\rho\in\widehat{G}$, we have an autoequivalence
	\begin{equation*}
		\rho^\ast(F,(\phi_h)):= (F,(\phi_h))\otimes \rho := (F,(\phi_h\cdot \rho(h))).
	\end{equation*}
	For $\rho_1$,$\rho_2\in \widehat{G}$, the equivariant objects $\rho_1^\ast\rho_2^\ast(F,(\phi_h))$ and $(\rho_2\circ \rho_1)^\ast(F,(\phi_h))$ are the same, hence we set the isomorphisms $c_{\rho_2,\rho_1}$ to be the identities.
\end{example}

\begin{theorem}[{\cite[Theorem 6.3]{elagin2012descent}, \cite[Proposition 3.11]{elagin2014equivariant}, \cite[Theorem 3.2]{kuznetsov2017derived}}]\label{thm:equivariant_sod}
    Let $X$ be a quasi-projective variety with an action of a finite group $G$. Suppose we have a semiorthogonal decomposition $\Db(X) = \langle \sA_1, \dots, \sA_n \rangle$ where $g^\ast\sA_i\subset\sA_i$ for all $g\in G$. Then the $G$-equivariant category has the following semiorthogonal decomposition
    \begin{equation*}
        \Db(X)^G = \langle \sA_1^G , \dots, \sA_n^G \rangle.
    \end{equation*}
\end{theorem}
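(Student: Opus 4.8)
The plan is to descend the given semiorthogonal decomposition of $\Db(X)$ along the forgetful functor $\Forg\colon \Db(X)^G \to \Db(X)$, exploiting two facts: equivariant $\Hom$-groups are the $G$-invariants of the ordinary ones, and the projection functors of the decomposition are $G$-equivariant because each component is preserved by the action. First I would record the computation of morphisms in the equivariant category: for equivariant objects $(E,\phi)$ and $(F,\psi)$ there is a natural identification
\[ \Hom^\bullet_{\Db(X)^G}\bigl((E,\phi),(F,\psi)\bigr) \simeq \Hom^\bullet_{\Db(X)}(E,F)^G, \]
where $G$ acts on the right-hand side through the autoequivalences $g^\ast$ and the linearisations $\phi,\psi$ (a morphism is $G$-invariant precisely when it commutes with the linearisations). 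This identification is the engine of the whole argument.

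Semiorthogonality is then immediate. If $(E,\phi)\in\sA_i^G$ and $(F,\psi)\in\sA_j^G$ with $i>j$, then $E\in\sA_i$ and $F\in\sA_j$, so $\Hom^\bullet_{\Db(X)}(E,F)=0$ by semiorthogonality of $\langle\sA_1,\dots,\sA_n\rangle$; taking $G$-invariants of the zero group gives the vanishing in $\Db(X)^G$. Here the hypothesis $g^\ast\sA_i\subset\sA_i$ is exactly what guarantees that the action restricts to each $\sA_i$, so that $\sA_i^G$ is defined and embeds in $\Db(X)^G$.

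The substance is in the generation statement. Given an equivariant object $(F,\phi)$, I would apply the projection functors $\pi_i\colon\Db(X)\to\sA_i$ of the decomposition, built from the mutation functors $\bL$, $\bR$ of Section~\ref{sec:preliminaries}, to the underlying object $F$, producing the canonical filtration with $\Cone(F_i\to F_{i-1})=\pi_i(F)\in\sA_i$. Because each $g^\ast$ preserves the entire decomposition, there are natural isomorphisms $g^\ast\circ\pi_i\simeq\pi_i\circ g^\ast$; these let me transport the linearisation $\phi$ of $F$ to a linearisation of each truncation $F_i$ and of each graded piece $\pi_i(F)$, thereby upgrading the filtration to one inside $\Db(X)^G$ with graded pieces in $\sA_i^G$.

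The main obstacle will be the bookkeeping in this last step: one must verify that the linearisations induced on the filtration pieces genuinely satisfy the cocycle compatibility in the definition of an equivariant object, i.e.\ that the isomorphisms $g^\ast\circ\pi_i\simeq\pi_i\circ g^\ast$ are compatible with the associativity constraints $c_{g,h}$. This is a diagram-chase that reduces to the functoriality of $\pi_i$ and the naturality of the mutation functors. The key conceptual point is that, since mutation and projection are honest functors rather than operations defined object-by-object, the usual failure of functoriality of cones in a triangulated category never intervenes, and the linearisation of $F$ propagates canonically through the filtration.
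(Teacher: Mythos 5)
This statement is quoted background in the paper: it is cited from Elagin and from Kuznetsov--Perry, and the paper gives no proof of it, so there is no internal argument to compare yours against. Judged on its own merits, your proposal is essentially the standard direct proof of this result for finite groups in characteristic zero, and its outline is correct. The identification $\Hom^\bullet_{\Db(X)^G}\bigl((E,\phi),(F,\psi)\bigr) \simeq \Hom^\bullet_{\Db(X)}(E,F)^G$ is definitional in the equivariant category and does give semiorthogonality at once; and the generation argument via the functorial truncations of the filtration, with the linearisation transported through the canonical isomorphisms $g^*\circ\pi_i \simeq \pi_i\circ g^*$ and the cocycle conditions checked by uniqueness (the isomorphisms are unique because semiorthogonality forces the relevant $\Hom$-spaces to vanish), is the right mechanism. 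For contrast, Elagin's published proof runs through comonadic descent, which is heavier but more general; your route is the elementary one available in exactly this setting.

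Two points need more care than you give them. First, you implicitly use that $\Db(X)^G$ is triangulated and that the forgetful functor $\Forg\colon \Db(X)^G \to \Db(X)$ is exact. For a finite group acting on a quasi-projective variety over $\bC$ this holds because $\Db(X)^G \simeq \Db(\Coh^G(X))$, but this is an input (a theorem of Elagin, using that $|G|$ is invertible), not a formality: for a general triangulated category with a $G$-action the equivariant category need not be triangulated, which is precisely why the cited references exist. Second, your final step --- putting linearisations on the graded pieces $\pi_i(F)$ and declaring the resulting triangles to be a filtration in $\Db(X)^G$ --- silently assumes that a triangle of equivariant objects whose underlying triangle is distinguished is itself distinguished in $\Db(X)^G$; this ``detection'' property is not formal. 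The clean fix avoids linearising the graded pieces altogether: once you have equivariant maps $(F_i,\phi^{(i)}) \to (F_{i-1},\phi^{(i-1)})$ lifting the truncation maps, take the cone $C_i$ in the triangulated category $\Db(X)^G$. Since $\Forg$ is exact, $\Forg(C_i) \simeq \Cone(F_i \to F_{i-1}) = \pi_i(F) \in \sA_i$, and because $\sA_i$ is full, $\sA_i^G$ is exactly the full subcategory of $\Db(X)^G$ of objects whose underlying object lies in $\sA_i$; hence $C_i \in \sA_i^G$ and the equivariant filtration witnesses the decomposition. With these two adjustments your proof is complete.
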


\begin{definition}
    An action of $G$ on $\sC$ is called \emph{trivial} if for each $g \in G$ there is an isomorphism of functors, $\tau_g \colon \id \xrightarrow{\sim} g^*$, such that $c_{g,h} \circ \tau_{gh} = h^* \tau_g \circ \tau_h$ for all $g,h \in G$.
\end{definition}

\begin{proposition}[{\cite[Proposition 3.3]{kuznetsov2017derived}}] \label{prop:orthogonal_equivariant_sod}
    Let $G$ be a finite group acting trivially on a triangulated category $\sC$. Then $\sC^G$ is also triangulated. Let $\rho_0, \dots, \rho_n$ be the irreducible representations of $G$. Then there is a completely orthogonal\footnote{Meaning the Homs vanish in both directions.} decomposition 
    \begin{equation*}
        \sC^G = \langle \sC \otimes \rho_0, \dots , \sC \otimes \rho_n \rangle.
    \end{equation*}
\end{proposition}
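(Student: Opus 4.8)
The plan is to reduce the statement to the ordinary representation theory of $G$ over $\bC$, using that $\characteristic\bC = 0$ makes the group algebra $\bC[G]$ semisimple. First I would unwind the definition of a trivial action to identify $\sC^G$ with the category of representations of $G$ internal to $\sC$. Given the isomorphisms $\tau_g\colon \id \xrightarrow{\sim} g^*$ witnessing triviality, any equivariant object $(F,\phi)$ can be rewritten by setting $\psi_g := \tau_g(F)^{-1}\circ \phi_g \in \Aut(F)$. Substituting $\phi_g = \tau_g(F)\circ\psi_g$ into the equivariance relation $c_{g,h}(F)\circ\phi_{gh} = h^*(\phi_g)\circ\phi_h$, and using both the triviality identity $c_{g,h}\circ\tau_{gh} = h^*\tau_g\circ\tau_h$ and the naturality of $\tau_h$ applied to $\psi_g$, one finds the cocycle collapses to $\psi_{gh} = \psi_g\circ\psi_h$; that is, $\psi\colon G\to\Aut(F)$ is a genuine group homomorphism, which extends $\bC$-linearly to an algebra map $\bC[G]\to\End(F)$. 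The same substitution identifies a morphism of equivariant objects with a $G$-intertwiner, so that $\Hom_{\sC^G}((A,\psi^A),(B,\psi^B)) = \Hom_\sC(A,B)^G$ for the conjugation action $g\cdot f = \psi^B_g\circ f\circ(\psi^A_g)^{-1}$.

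Next I would compute the Homs between the proposed components. The object $E\otimes\rho_i$ has underlying object $E\otimes\rho_i$ and linearisation $\id_E\otimes\rho_i(g)$, so $\Hom_\sC(A\otimes\rho_i, B\otimes\rho_j) = \Hom_\sC(A,B)\otimes\Hom_\bC(\rho_i,\rho_j)$ as $G$-representations, with $G$ acting only on the second factor by conjugation. Taking invariants and applying Schur's lemma over the algebraically closed field $\bC$ gives
\[
\Hom_{\sC^G}(A\otimes\rho_i, B\otimes\rho_j) = \Hom_\sC(A,B)\otimes\Hom_G(\rho_i,\rho_j),
\]
which vanishes for $i\neq j$ and equals $\Hom_\sC(A,B)$ for $i=j$. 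Since this is symmetric in $i$ and $j$, it yields complete orthogonality (Homs vanish in both directions) of the subcategories $\sC\otimes\rho_i$, and full faithfulness of each functor $E\mapsto E\otimes\rho_i$. Running the identical computation with $\Ext^t_\sC$ in place of $\Hom_\sC$ shows the vanishing and the isomorphism persist for the graded $\Hom^\bullet$, as required for a decomposition into triangulated subcategories.

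The crux is generation. Let $e_i = \tfrac{\dim\rho_i}{|G|}\sum_{g\in G}\overline{\chi_i(g)}\,g \in \bC[G]$ be the central idempotents, and for $(F,\psi)$ set $\pi_i := \psi(e_i)\in\End(F)$. Centrality of $e_i$ forces $\pi_i$ to commute with every $\psi_h$, so each $\pi_i$ is an endomorphism of $(F,\psi)$ in $\sC^G$; semisimplicity of $\bC[G]$ makes $\{\pi_i\}$ a complete system of orthogonal idempotents with $\sum_i\pi_i = \id_F$. Splitting these idempotents—using that $\sC$, and hence $\sC^G$, is idempotent complete—decomposes $(F,\psi) = \bigoplus_i (F_i,\psi_i)$ into its isotypic components, and the $\rho_i$-isotypic summand lies in $\sC\otimes\rho_i$. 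Combined with Step 2, this gives the completely orthogonal decomposition. Finally, since $\sC^G$ is thereby equivalent to the finite product of the triangulated categories $\sC\otimes\rho_i\simeq\sC$, with shifts and triangles computed componentwise, it is itself triangulated.

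The hard part will be the generation step rather than the Hom computation: everything there rests on the semisimplicity of $\bC[G]$ in characteristic zero and on the splitting of the averaging idempotents $\pi_i$, which is where idempotent completeness is genuinely used. The supporting bookkeeping—tracking the cocycle through the substitution $\phi_g=\tau_g\circ\psi_g$, and verifying via centrality of $e_i$ that $\pi_i$ descends to a morphism in $\sC^G$—is routine but must be carried out with care, whereas the Schur's-lemma orthogonality is the easy ingredient.
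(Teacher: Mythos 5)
The paper offers no proof of its own here: the proposition is quoted verbatim from \cite[Proposition 3.3]{kuznetsov2017derived}, and your argument follows essentially the same route as the cited proof — trivialise the cocycle so that an equivariant object becomes a $\bC[G]$-module internal to $\sC$ (your computation that $\psi_{gh}=\psi_g\circ\psi_h$ and that morphisms become intertwiners is correct), use Schur's lemma for complete orthogonality and full faithfulness of $E\mapsto E\otimes\rho_i$, and use the central idempotents of the semisimple algebra $\bC[G]$ to decompose an arbitrary equivariant object. Deriving the triangulated structure on $\sC^G$ at the end from the componentwise identification with copies of $\sC$ is also how the cited source handles that part of the statement.

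Two caveats. First, idempotent completeness is not among the hypotheses of the proposition as stated (a proper $\bC$-linear triangulated category need not be Karoubian); it is genuinely needed exactly where you use it, it is part of the setting of \cite{kuznetsov2017derived}, and it holds for every category to which this paper applies the result (admissible subcategories of $\Db$ of smooth proper varieties and stacks), but it should be recorded as an assumption. Second — the only real gap — your generation step ends with the bare assertion that ``the $\rho_i$-isotypic summand lies in $\sC\otimes\rho_i$''. This is immediate only when $\dim\rho_i=1$: then $g\cdot e_i=\rho_i(g)e_i$ in $\bC[G]$, so $\psi_g$ restricts to the scalar $\rho_i(g)$ on $F_i$ and $(F_i,\psi|_{F_i})=F_i\otimes\rho_i$ on the nose. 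For a nonabelian $G$ and $\dim\rho_i>1$, the central idempotent $e_i$ alone does not exhibit $F_i$ as an object of the form $A\otimes\rho_i$; one needs the Morita (matrix-unit) argument: $\bC[G]e_i\simeq\End(V_i)$, so split the non-central idempotent $\psi(E_{11})$ in $\sC$, set $A=\operatorname{im}\psi(E_{11})$, and use the maps $\psi(E_{j1})\colon A\to F_i$ and $\psi(E_{1j})\colon F_i\to A$ to build an equivariant isomorphism $A\otimes\rho_i\simeq(F_i,\psi_i)$. Since the proposition is stated for an arbitrary finite group, this step must be written out; note, however, that for all uses in this paper ($G=\mu_n$ or $\widehat{\mu}_n$, abelian, so every $\rho_i$ is a character) your argument as written is already complete.
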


In the situation of Proposition \ref{prop:orthogonal_equivariant_sod}, we define the functors 
\begin{align}
    \iota_k &\colon \sC \to \sC^G, \quad F \mapsto F \otimes \rho_k; \\
    \pi_k &\colon \sC^G \to \sC, \quad F \mapsto F \otimes_{\bC[G]} \rho_k^\vee .
\end{align}
The functors $\pi_k$ are both left and right adjoint to $\iota_k$.

\section{Cyclic covers and semiorthogonal decompositions} \label{sec:covers}

\subsection{Cyclic covers and rectangular Lefschetz decompositions}\label{ssec:setup_covers}
In this section we follow the notation of \cite{kuznetsov2017derived}. Let $Y$ be an algebraic variety (or a proper Deligne--Mumford (DM) stack). Consider $f\colon X \rightarrow Y$, the degree $n$ cyclic cover of $Y$ ramified over a divisor $Z$.
Denote by $j \colon Z \hookrightarrow X$ the embedding of $Z$ as the ramification divisor.

Let $\mu_n$ denote the group of $n$-th roots of unity. This has dual group $\widehat{\mu_n}\coloneqq\Hom(\mu_n,\bC^\times)\simeq\bZ/n$.
This isomorphism is given by the primitive character $\chi \colon \mu_n \to \bC^\times$.
Each irreducible representation $\rho_i$ of $\mu_n$ corresponds to $\chi^i$.

We consider the trivial action of $\mu_n$ on $Y$. Then $f$ is $\mu_n$-equivariant, and induces the following functors between equivariant derived categories:
\begin{align*}
    f_k^* &\colon \Db(Y) \xrightarrow{\iota_k} \Db(Y)^{\mu_n} \xrightarrow{f^*} \Db(X)^{\mu_n}, \\
    f_{k *} &\colon \Db(X)^{\mu_n} \xrightarrow{f_*} \Db(Y)^{\mu_n} \xrightarrow{\pi_k} \Db(Y), \\
    f_k^! &\colon \Db(Y) \xrightarrow{\iota_k} \Db(Y)^{\mu_n} \xrightarrow{f^!} \Db(X)^{\mu_n} .
\end{align*}
Note that $f_k^*$ is left adjoint to $f_{k *}$, and $f_k^!$ is right adjoint to $f_{k *}$. Similarly, define the functors $j_k^*, j_{k *}, j_k^!$ as the compositions
\begin{align*}
    j_k^* &\colon \Db(X)^{\mu_n} \xrightarrow{j^*} \Db(Z)^{\mu_n} \xrightarrow{\pi_k} \Db(Z), \\
    j_{k *} &\colon \Db(Z) \xrightarrow{\iota_k} \Db(Z)^{\mu_n} \xrightarrow{j_*} \Db(X)^{\mu_n}, \\
    j_k^* &\colon \Db(X)^{\mu_n} \xrightarrow{j^!} \Db(Z)^{\mu_n} \xrightarrow{\pi_k} \Db(Z) .
\end{align*}
Also note that $j_k^*$ is left adjoint to $j_{k *}$, and $j_k^!$ is right adjoint to $j_{k *}$.

\begin{theorem}[{\cite[Theorem 4.1]{kuznetsov2017derived}}]\label{prop:sod_2}
    For each $k\in \bZ/n$, the functors $f^\ast_k$ and $j_{k\ast}$ are fully faithful. Moreover, there is a semiorthogonal decomposition
    \begin{equation*}
        \Db(X)^{\mu_n}=\langle  f_0^* \Db(Y), j_{0*} \Db(Z) , \dots,  j_{n-2*} \Db(Z) \rangle.
    \end{equation*}
\end{theorem}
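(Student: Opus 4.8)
The plan is to establish the statement in three steps — full faithfulness, semiorthogonality, and generation — all built on two structural inputs. First, since $\mu_n$ acts trivially on $Y$ and, being the covering involution, also acts trivially on the ramification divisor $Z$, Proposition \ref{prop:orthogonal_equivariant_sod} shows that $\Db(Y)^{\mu_n}$ and $\Db(Z)^{\mu_n}$ split completely orthogonally into their character components $\Db(Y)\otimes\rho_k$ and $\Db(Z)\otimes\rho_k$. Second, I record the two geometric facts I need: the decomposition $f_*\sO_X=\bigoplus_{i=0}^{n-1}\sO_Y(-id)$, in which the summand $\sO_Y(-id)$ lies in a distinct character for each $i$ (the $i=0$ summand being the invariant one), and the $\mu_n$-character of the normal bundle $N=N_{Z/X}$. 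The latter comes from the local model $w^n=s$ with $w\mapsto\zeta w$: the conormal $N^\vee$ is generated by $w$ and so carries the primitive character, whence $N\simeq\rho_{-1}$.

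For full faithfulness I compute the relevant unit and counit by adjunction. Using $f_k^*\dashv f_{k*}$ and the projection formula, $f_{k*}f_k^*B=\pi_k\bigl(\iota_kB\otimes f_*\sO_X\bigr)$; since only the $i=0$ summand of $f_*\sO_X$ keeps the character equal to $k$, the projection returns $B$, so $f_{k*}f_k^*\simeq\id$ and $f_k^*$ is fully faithful. For $j_{k*}\dashv j_k^!$, the regular-embedding identity $j^!\simeq j^*(\blank)\otimes N[-1]$ applied to the standard triangle $F\otimes N^\vee[1]\to j^*j_*F\to F$ yields a triangle $F\to j^!j_*F\to F\otimes N[-1]$; with $F=\iota_kD$ its two outer terms sit in characters $k$ and $k-1$, so the triangle splits in $\Db(Z)^{\mu_n}$ and $j_k^!j_{k*}D=\pi_k\bigl(\iota_kD\oplus\iota_{k-1}(D\otimes N)[-1]\bigr)=D$. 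Hence $j_{k*}$ is fully faithful.

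Semiorthogonality is then pure character bookkeeping from the same two splittings. For $\Hom^\bullet(j_{l*}C,f_0^*B)=\Hom^\bullet\bigl(C,\pi_l(j^!f^*B)\bigr)$ I use $j^!f^*B=(fj)^*B\otimes N[-1]$, which lies entirely in character $-1\equiv n-1$; as $l$ ranges only over $0,\dots,n-2$ this Hom always vanishes. For $\Hom^\bullet(j_{l*}C,j_{m*}D)=\Hom^\bullet\bigl(C,\pi_l(j^!j_*\iota_mD)\bigr)$ with $l>m$, the object $j^!j_*\iota_mD$ splits into its character-$m$ part $\iota_mD$ and its character-$(m-1)$ part $\iota_{m-1}(D\otimes N)[-1]$; since $l>m$ forces $l\neq m$ and $l\neq m-1$, the projection $\pi_l$ annihilates both. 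This gives exactly the required vanishing for the listed ordering.

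The crux — and the step I expect to be the main obstacle — is generation. Because $f$ is finite and flat, $\Db(X)$ is classically generated by $f^*\Db(Y)$ (identifying $\Db(X)$ with modules over the $\sO_Y$-algebra $f_*\sO_X$); spreading this over characters, and using that in characteristic zero every equivariant object is a retract of the inflation of its underlying object, one gets that $\Db(X)^{\mu_n}$ is generated by the $n$ subcategories $f_k^*\Db(Y)$, $k\in\bZ/n$. It therefore suffices to place each of these inside the subcategory $\sD$ generated by the listed components. Twisting the equivariant Koszul sequence $0\to\sO_X(-Z)\to\sO_X\to j_*\sO_Z\to0$ by $f_k^*G$, and using $\sO_X(-Z)\simeq f_1^*\sO_Y(-d)$ together with $f^*G\otimes j_*\sO_Z\simeq j_*(fj)^*G$, produces the triangle
\[ f_{k+1}^*\bigl(G(-d)\bigr)\to f_k^*G\to j_{k*}\bigl((fj)^*G\bigr). \]
Thus $f_{k+1}^*\Db(Y)\subseteq\langle f_k^*\Db(Y),\,j_{k*}\Db(Z)\rangle$. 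Starting from $f_0^*\Db(Y)\subset\sD$ and inducting over $k=0,\dots,n-2$, each step absorbs $f_{k+1}^*\Db(Y)$ using precisely one new component $j_{k*}\Db(Z)$, so that all of $f_0^*,\dots,f_{n-1}^*$ land in $\sD$ while only $j_{0*},\dots,j_{n-2*}$ are ever needed. This forces $\sD=\Db(X)^{\mu_n}$. The genuinely delicate point is the character bookkeeping in this last triangle — in particular that the shift $\sO_X(-Z)\simeq f_1^*\sO_Y(-d)$ makes the induction close up on exactly the components $j_{0*},\dots,j_{n-2*}$ of the statement and never calls on $j_{n-1,*}$.
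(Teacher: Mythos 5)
The paper itself gives no proof of this statement --- it is quoted directly from \cite[Theorem 4.1]{kuznetsov2017derived} --- so your argument has to be measured against Kuznetsov--Perry's. Your treatment of full faithfulness and semiorthogonality is correct and is essentially their argument: the equivariant projection formula plus the character decomposition of $f_*\sO_X$ gives $f_{k*}f_k^*\simeq\id$; the triangle $F\to j^!j_*F\to F\otimes N[-1]$ together with the fact that $N$ carries a primitive character makes every relevant triangle split character by character, which yields full faithfulness of $j_{k*}$ and all the orthogonality the stated ordering requires (your sign conventions are internally consistent and produce exactly the weights $0,\dots,n-2$). Your equivariant Koszul triangle is likewise the engine behind \cite[Lemma 6.1]{kuznetsov2017derived}, i.e.\ Lemma \ref{lem:right_mutation_lemma} above.

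The genuine gap is the opening claim of your generation step: that $\Db(X)$ is classically generated by $f^*\Db(Y)$ ``by identifying $\Db(X)$ with modules over the $\sO_Y$-algebra $f_*\sO_X$''. That identification is true but proves nothing, and the assertion it is meant to justify is \emph{false} at the stated level of generality. For the finite flat morphism $\Spec\bC[x]/(x^2)\to\Spec\bC$, the induced modules are exactly the free ones, and their thick hull is $\Dperf(\bC[x]/(x^2))$, which is strictly smaller than $\Db(\bC[x]/(x^2))$. So any correct proof must use smoothness of $X$, which your argument never invokes. The claim is true in the present setting, but it needs real input: since $f$ is finite, $f^*\sO_Y(1)$ is ample on $X$, and on a smooth quasi-projective variety the powers of an ample line bundle classically generate the bounded derived category (Bondal--Van den Bergh/Neeman/Orlov; equivalently, $f_*$ is conservative on $\Dqc(X)$ because $f$ is affine, so the perfect complexes $f^*\sO_Y(-m)$, $m\gg0$, compactly generate $\Dqc(X)$, and by Neeman--Thomason their thick hull is $\Dperf(X)=\Db(X)$). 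A smaller omission in the same step: your steps (1)--(2) produce \emph{thick} generation (retracts enter through the inflation argument), so concluding $\sD=\Db(X)^{\mu_n}$ requires $\sD$ to be closed under direct summands; this holds because each listed component is admissible (a fully faithful image of a functor with adjoints on both sides), hence so is $\sD$, but it needs saying.

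You could also bypass the problematic claim entirely, which is the cleaner way to close the argument. Since $\sD$ is right admissible, it suffices to prove $\sD^\perp=0$. If $C\in\sD^\perp$, then $f_{0*}C=0$ and $j_k^!C=0$ for $k=0,\dots,n-2$. Applying $\Hom(-,C)$ to your triangle $f_{k+1}^*(G(-d))\to f_k^*G\to j_{k*}\bigl((fj)^*G\bigr)$ and using $j_k^!C=0$ gives, naturally in $G$,
\begin{equation*}
\Hom\bigl(G, f_{k*}C\bigr)\;\simeq\;\Hom\bigl(G,(f_{(k+1)*}C)(d)\bigr),
\end{equation*}
hence $f_{k*}C\simeq (f_{(k+1)*}C)(d)$ for each such $k$. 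Starting from $f_{0*}C=0$ this kills all $f_{k*}C$, so $f_*C=0$, and conservativity of $f_*$ (for the finite morphism $f$) forces $C=0$. This route requires no generation statement for $\Db(X)$ at all.
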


Now additionally assume that $Y$ has a rectangular Lefschetz decomposition, i.e. assume that there is a line bundle $\sO_Y(1)$ and an admissible subcategory $\sB \subset \Db(Y)$ such that
\begin{equation*}
    \Db(Y)=\langle \sB, \sB(1),\ldots,\sB(m-1)\rangle
\end{equation*}
is a semiorthogonal decomposition, where $\sB(t)\coloneqq \sB\otimes \sO_Y(t)$. Assume moreover that $Z\in |\sO_Y(nd)|$, where $n,d$ are positive integers satisfying $0<m-(n-1)d\eqqcolon M$. Then \cite[Lemma 5.1]{kuznetsov2017derived} applies, hence $f^*$ is fully faithful and $\Db(X)$ has the semiorthogonal decomposition 
\begin{equation} \label{eq:ordinary_kuznetsov_component_sod}
    \Db(X) = \langle \sA_X, \sB_X, \dots, \sB_X(M-1) \rangle,
\end{equation}
where $\sB_X \coloneqq f^* \sB$. We call $\sA_X = \langle \sB_X, \dots, \sB_X(M-1) \rangle^\perp$ the \emph{Kuznetsov component}.

For subsets $T \subset \bZ$ and $S \subset \bZ/n$, define
\begin{equation*}
    \sB_{X}^{S}(T) \coloneqq \langle \sB_X(t) \otimes \rho_k \rangle_{t \in T, k \in S} \subset \Db(Y)^{\mu_n}.
\end{equation*}

\begin{proposition} \label{prop:sod_1}
    We have the following semiorthogonal decomposition,
    \begin{equation*}
        \Db(X)^{\mu_n}=\langle \sA_X^{\mu_n}, \sB_X^{[0,n-1]}([0,M-1])\rangle.
    \end{equation*}
\end{proposition}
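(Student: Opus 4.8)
The plan is to descend the semiorthogonal decomposition \eqref{eq:ordinary_kuznetsov_component_sod} of $\Db(X)$ to the $\mu_n$-equivariant category by combining the two equivariant decomposition results of Section \ref{sec:equivariant_triangulated_categories}: Theorem \ref{thm:equivariant_sod}, which lifts an invariant semiorthogonal decomposition to the equivariant category, and Proposition \ref{prop:orthogonal_equivariant_sod}, which splits a trivial action into its isotypic components. The entire argument rests on the fact that each block $\sB_X(t)$ is pulled back from $Y$, where $\mu_n$ acts trivially.

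First I would record the geometric input. Let $g\colon X\to X$ denote a generator of the covering group $\mu_n$; then $f\circ g=f$, so for every $E\in\Db(Y)$ there are canonical base-change isomorphisms $g^*(f^*E)\xrightarrow{\sim}f^*E$, compatible with composition in $\mu_n$. Since $\sO_X(1)=f^*\sO_Y(1)$ and $f^*$ is fully faithful, each block is a category of pullbacks, $\sB_X(t)=f^*\sB\otimes\sO_X(t)=f^*\big(\sB(t)\big)$. Hence $g^*\sB_X(t)\subseteq\sB_X(t)$ for all $g$, and moreover the induced action of $\mu_n$ on $\sB_X(t)$ is \emph{trivial} in the sense of the trivial-action definition: the canonical isomorphisms furnish natural transformations $\tau_g\colon\id\xrightarrow{\sim}g^*$, and the coherence identity $c_{g,h}\circ\tau_{gh}=h^*\tau_g\circ\tau_h$ holds because every map in sight is a canonical base-change isomorphism. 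Applying the same observation to the whole of \eqref{eq:ordinary_kuznetsov_component_sod}, the autoequivalence $g^*$ fixes each $\sB_X(t)$ and therefore preserves their common right orthogonal, giving $g^*\sA_X\subseteq\sA_X$.

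With invariance established, Theorem \ref{thm:equivariant_sod} applied to \eqref{eq:ordinary_kuznetsov_component_sod} yields
\[
\Db(X)^{\mu_n}=\langle \sA_X^{\mu_n},\ (\sB_X)^{\mu_n},\ \dots,\ (\sB_X(M-1))^{\mu_n}\rangle.
\]
Since the action on each $\sB_X(t)$ is trivial, Proposition \ref{prop:orthogonal_equivariant_sod} refines the $t$-th block into a completely orthogonal decomposition
\[
(\sB_X(t))^{\mu_n}=\langle \sB_X(t)\otimes\rho_0,\ \dots,\ \sB_X(t)\otimes\rho_{n-1}\rangle,
\]
where $\sB_X(t)\otimes\rho_k=f_k^*\sB(t)$ in the notation of Section \ref{ssec:setup_covers}. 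Substituting these refinements and recalling that, by definition, $\sB_X^{[0,n-1]}([0,M-1])$ is the subcategory generated by all the $\sB_X(t)\otimes\rho_k$ with $t\in[0,M-1]$ and $k\in[0,n-1]$, I obtain exactly
\[
\Db(X)^{\mu_n}=\langle \sA_X^{\mu_n},\ \sB_X^{[0,n-1]}([0,M-1])\rangle.
\]

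I expect the only genuine obstacle to be the second step: checking that $\mu_n$ acts \emph{trivially} on $\sB_X(t)$ as a categorical action, rather than merely preserving it. This means producing the natural isomorphisms $\tau_g$ and verifying their cocycle compatibility with the structure isomorphisms $c_{g,h}$, which is precisely where one uses that $\sO_X(1)$ and all objects of $\sB_X(t)$ are pulled back along $f$ with $f\circ g=f$. Once this triviality is in hand, the two cited decomposition results apply formally, and identifying the resulting generators with $\sB_X^{[0,n-1]}([0,M-1])$ is immediate from the definition of that notation.
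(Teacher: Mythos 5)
Your proof is correct and follows essentially the same route as the paper's: apply Theorem \ref{thm:equivariant_sod} to the decomposition \eqref{eq:ordinary_kuznetsov_component_sod}, then use Proposition \ref{prop:orthogonal_equivariant_sod} to split each equivariant block $(\sB_X(t))^{\mu_n}$ into the isotypic pieces $\sB_X(t)\otimes\rho_k$. The paper states the $\mu_n$-invariance and triviality of the action on each $\sB_X(t)$ without proof, so your verification of these facts via $f\circ g=f$ is a welcome (and correct) elaboration rather than a departure.
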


\begin{proof}
    Each piece of the semiorthogonal decomposition (\ref{eq:ordinary_kuznetsov_component_sod}) is preserved by the action of $\mu_n$. Thus, by Theorem \ref{thm:equivariant_sod} the action of $\mu_n$ distributes through the semiorthogonal decomposition. Furthermore, since the action of $\mu_n$ is trivial on each piece $\sB_X(i)$, we get $\sB_X(i)^{\mu_n} = \sB_X^{[0,n-1]}(i)$ by Proposition \ref{prop:orthogonal_equivariant_sod}. The result follows. 
\end{proof}

We also state a useful lemma which we will use in the next section.

\begin{lemma}[{\cite[Lemma 6.1]{kuznetsov2017derived}}] \label{lem:right_mutation_lemma}
    For any twist $ t \in \bZ$ and weight $k \in \bZ/n$ we have
    \begin{equation*}
        \bR_{j_{k *} \Db(Z)} (\sB_X^k(t)) = \sB_X^{k+1}(t-d) .
    \end{equation*}
\end{lemma}

\subsection{A semiorthogonal decomposition of \texorpdfstring{$\sA_X^{\mu_n}$}{the equivariant Kuznetsov component}}
\label{sec:sodAX}

For this section, assume moreover that $0<M<d$. Before stating the main theorem, let us set up some more notation. Define the functor $\Phi_k \colon \Db(X)^{\mu_n} \to \Db(X)^{\mu_n}$ for $0 \leq k \leq n-2$ as 
\begin{equation*}
    \Phi_k(-) \coloneqq \bL_{\sB_X^{[0,k]}([0,M-1])}(- \otimes \rho_k).
\end{equation*}
Furthermore, define the subcategory $\sA_Z$ of $\Db(X)^{\mu_n}$ as follows:
\begin{equation} \label{eq:A_Z_definition}
    \sA_Z \coloneqq \langle j_{0 *} \Db(Z), \sB_X^1([M-d,-1]) \rangle .
\end{equation}

\begin{theorem} \label{thm:main}
    Let $X$ and $Z$ be as in Section \ref{ssec:setup_covers}, with $0<M<d$. We have a semiorthogonal decomposition,
    \begin{equation*}
        \sA_X^{\mu_n} = \langle \Phi_0(\sA_Z) , \Phi_1(\sA_Z) , \dots, \Phi_{n-2}(\sA_Z) \rangle .
    \end{equation*}
\end{theorem}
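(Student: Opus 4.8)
The plan is to derive the decomposition by transforming the equivariant semiorthogonal decomposition of Theorem \ref{prop:sod_2} through an explicit chain of mutations, using Lemma \ref{lem:right_mutation_lemma} as the engine. Since a mutation of a semiorthogonal decomposition is again one generating the same category, a single such chain will simultaneously produce the functors $\Phi_k$, establish their full faithfulness, and yield semiorthogonality and generation. I would begin by expanding $f_0^*\Db(Y)$ in Theorem \ref{prop:sod_2} via the rectangular Lefschetz decomposition to obtain
\[\Db(X)^{\mu_n}=\langle \sB_X^0([0,m-1]),\,j_{0*}\Db(Z),\,\dots,\,j_{n-2*}\Db(Z)\rangle,\]
and splitting the weight-$0$ twists as $[0,m-1]=[0,M-1]\sqcup[M,m-1]$. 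Since $m-1=M-1+(n-1)d$, the upper range tiles into $n-1$ consecutive length-$d$ blocks $I_w=[M+(w-1)d,\,M-1+wd]$ for $w=1,\dots,n-1$.

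The core is a cascade. For each $w$ I would move $\sB_X^0(I_w)$ rightward, raising its weight, by successively right-mutating through $j_{0*}\Db(Z),\dots,j_{w-1*}\Db(Z)$; by Lemma \ref{lem:right_mutation_lemma} each step sends $\sB_X^a(t)\mapsto\sB_X^{a+1}(t-d)$, so the block becomes $\sB_X^w([M-d,M-1])$ sitting just to the right of $j_{w-1*}\Db(Z)$. Performed in a suitable order, this yields
\[\langle \sB_X^0([0,M-1]),\,j_{0*}\Db(Z),\,\sB_X^1([M-d,M-1]),\,j_{1*}\Db(Z),\,\sB_X^2([M-d,M-1]),\,\dots\rangle.\]
Because $0<M<d$, the range $[M-d,-1]$ is nonempty and disjoint from $[0,M-1]$, so I can split $\sB_X^{k+1}([M-d,M-1])=\langle\sB_X^{k+1}([M-d,-1]),\,\sB_X^{k+1}([0,M-1])\rangle$ and regroup $\langle j_{k*}\Db(Z),\,\sB_X^{k+1}([M-d,-1])\rangle=\sA_Z\otimes\rho_k$ by \eqref{eq:A_Z_definition}, arriving at
\[\langle \sB_X^0([0,M-1]),\,\sA_Z\otimes\rho_0,\,\sB_X^1([0,M-1]),\,\sA_Z\otimes\rho_1,\,\dots,\,\sA_Z\otimes\rho_{n-2},\,\sB_X^{n-1}([0,M-1])\rangle.\]

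Finally I would form the $\Phi_k$ by left mutations, in increasing order of $k$. Left-mutating $\sA_Z\otimes\rho_0$ past the block $\sB_X^0([0,M-1])$ on its left produces $\Phi_0(\sA_Z)$ and leaves $\sB_X^0([0,M-1])$ in place, which merges with the adjacent $\sB_X^1([0,M-1])$ into $\sB_X^{[0,1]}([0,M-1])$; this block now lies just left of $\sA_Z\otimes\rho_1$, and mutating yields $\Phi_1(\sA_Z)=\bL_{\sB_X^{[0,1]}([0,M-1])}(\sA_Z\otimes\rho_1)$, and so on. After $n-1$ such mutations the low-twist pieces accumulate into $\sB_X^{[0,n-1]}([0,M-1])$ and
\[\Db(X)^{\mu_n}=\langle \Phi_0(\sA_Z),\,\dots,\,\Phi_{n-2}(\sA_Z),\,\sB_X^{[0,n-1]}([0,M-1])\rangle.\]
As this decomposition and that of Proposition \ref{prop:sod_1} share $\sB_X^{[0,n-1]}([0,M-1])$ as their final component, their complementary parts agree, giving the claimed description of $\sA_X^{\mu_n}$. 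Each $\Phi_k$ is fully faithful on $\sA_Z$ because $\sA_Z\otimes\rho_k\subseteq{}^\perp\big(\sB_X^{[0,k]}([0,M-1])\big)$, which makes the left mutation restrict to an equivalence onto its image; one checks this containment by twisting the relevant $\Hom$ by $\rho_{-a}$, reducing it to $\Hom(j_{(k-a)*}\Db(Z),\sB_X^0(t))=0$ (immediate from Theorem \ref{prop:sod_2}) and to $\Hom_Y(\sB(s),\sB(t-wd))=0$, which follows from $f_*\sO_X=\bigoplus_i\sO_Y(-id)\otimes\rho_i$ and the rectangular vanishing once $0<M<d$ forces the twist gap into $[1,m-1]$.

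The step I expect to be the main obstacle is the bookkeeping of the cascade and of the intermediate regroupings: one must verify that at each right mutation the $\sB$-piece is genuinely adjacent to the correct $j_{k*}\Db(Z)$, that the cascades of different blocks never collide (they respect the initial left-to-right order of the $I_w$), and that each split, regrouping, and left mutation is a legitimate operation on a semiorthogonal decomposition. The clean way to control this is to track the full ordered list of components throughout and justify every adjacent swap by the Hom-vanishings above, all of which reduce to the rectangular Lefschetz decomposition on $Y$ together with the $\mu_n$-weight decomposition of $f_*\sO_X$.
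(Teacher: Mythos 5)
Your proposal is correct and follows essentially the same route as the paper's own proof: expand $\sB_X^0([0,m-1])$ into length-$d$ blocks, cascade them rightward via Lemma \ref{lem:right_mutation_lemma}, split $\sB_X^k([M-d,M-1])$ using $0<M<d$, regroup with $j_{k*}\Db(Z)$ into $\sA_Z\otimes\rho_k$, left-mutate to form the $\Phi_k$, and compare against Proposition \ref{prop:sod_1}. Your added justification of full faithfulness of the $\Phi_k$ (via $\sA_Z\otimes\rho_k\subseteq{}^\perp\bigl(\sB_X^{[0,k]}([0,M-1])\bigr)$, which is in fact automatic from the position of $\sA_Z\otimes\rho_k$ in the intermediate decomposition) is a harmless elaboration of what the paper leaves implicit in the mutation formalism.
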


\begin{remark}
The results of \cite{kuznetsov2017derived} apply to the numerical range $M\geq d$, while we work with $0<M<d$. See Section \ref{relation to other works} for a comparison between the results.
\end{remark}

\begin{proof}
    We follow the strategy of the proof in Section 6 of \cite{kuznetsov2017derived}. From Theorem \ref{prop:sod_2}, we have the semiorthogonal decomposition
    \begin{equation} \label{eq:starting_sod}
        \Db(X)^{\mu_n} = \langle \sB_X^0([0,m-1]), j_{0*} \Db(Z) , \dots,  j_{n-2*} \Db(Z) \rangle.
    \end{equation}
    Now rewrite the first component of the semiorthogonal decomposition above as 
    \begin{equation}
    \begin{split}\label{eq:B_split_up}
        \sB_X^0([0,m-1]) = \langle &\sB_X^0([0,M-1]) , \sB_X^0([M, M+d-1]), \\ &\sB_X^0([M+d, M+2d-1]), 
        \dots , \sB_X^0([m-d, m-1]) \rangle .
        \end{split}
    \end{equation}
    We next substitute the decomposition (\ref{eq:B_split_up}) into the semiorthogonal decomposition (\ref{eq:starting_sod}), and iteratively apply right mutations together with Lemma \ref{lem:right_mutation_lemma}. Then, as in \cite{kuznetsov2017derived}, we get
    \begin{equation}
        \begin{split}
            \label{eq:DXmu_split_up}
        \Db(X)^{\mu_n} = \langle &\sB_X^0([0,M-1]), j_{0*}\Db(Z), \sB_X^1([M-d,M-1]), j_{1*} \Db(Z), \\
        &\sB_X^2([M-d, M-1]), \dots, j_{n-2 *} \Db(Z), \sB_X^{n-1}([M-d, M-1]) \rangle .
        \end{split}
    \end{equation}
    Now we deviate from the proof in \emph{loc. cit.} and use $\sA_Z$ as defined above. Since $M<d$, \[\sB_X^k([M-d,M-1]) = \langle \sB_X^k([M-d,-1]), \sB_X^k([0,M-1]) \rangle \] for any  weight $k$. Also, since $j\colon Z \to X$ is equivariant, $j_{l*}(-)\otimes \rho_k = j_{l+k *}$. Therefore, \eqref{eq:DXmu_split_up} reads:
\begin{equation}
    \begin{split}
         \Db(X)^{\mu_n} = \langle &\sB_X^0([0,M-1]), \\
         & \sA_Z, \sB_X^1([0,M-1]), \\
         & \sA_Z \otimes \rho_1, \sB_X^2([0, M-1]),\\
         &\dots, \\
         &\sA_Z \otimes \rho_{n-2}, \sB_X^{n-1}([0, M-1]) \rangle.
    \end{split}
\end{equation}
We next apply left mutations and regroup as follows.
    \begin{equation}
        \begin{split}
        \Db(X)^{\mu_n} = \langle &\bL_{\sB_X^0([0,M-1])} ( \sA_Z) ) , \\
        &\bL_{\sB_X^0([0,M-1])}\bL_{\sB_X^1([0,M-1])}(\sA_Z \otimes \rho_1 ), \\
        &\dots,  \\
            &\bL_{\sB_X^0([0,M-1])}\bL_{\sB_X^1([0,M-1])}  \cdots  \bL_{\sB_X^{n-2}([0,M-1])} ( \sA_Z\otimes \rho_{n-2} ) ,  \\
            &\sB_X^0([0,M-1]), \sB_X^1([0,M-1]), \dots, \sB_X^{n-1}([0,M-1]) \rangle \\
            =  \langle &\bL_{\sB_X^0([0,M-1])} (\sA_Z ) , \\
            & \bL_{\sB_X^{[0,1]}([0,M-1])} ( \sA_Z \otimes \rho_1 ) , \dots, \\
            &\bL_{\sB_X^{[0,n-2]}([0,M-1])} (\sA_Z \otimes \rho_{n-2} ) , \sB_X^{[0,n-1]}([0,M-1]) \rangle \\
        =\langle & \Phi_0(\sA_Z) , \Phi_1(\sA_Z) , \dots, \Phi_{n-2}(\sA_Z), \sB_X^{[0,n-1]}([0,M-1])\rangle.
        \end{split}
    \end{equation}
    Since the right-hand sides of  the semiorthogonal decomposition above and in Proposition \ref{prop:sod_1} match up, the components to the left are equivalent. The statement of the theorem follows.
\end{proof}

\begin{corollary}
    Under the action of $\widehat{\mu}_n\simeq\bZ/n$ on $\Db(X)^{\mu_n}$, we have
        \begin{equation*}
        \sA_X = \langle \Phi_0(\sA_Z) , \Phi_1(\sA_Z) , \dots, \Phi_{n-2}(\sA_Z) \rangle^{\bZ / n} .
    \end{equation*}
\end{corollary}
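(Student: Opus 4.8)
The plan is to recognise the statement as an instance of \emph{double equivariantization}. The finite abelian group $\mu_n$ acts on $\Db(X)$, and the equivariant category $\Db(X)^{\mu_n}$ carries the dual action of $\widehat{\mu_n}\simeq\bZ/n$ described earlier (following \cite{elagin2014equivariant}); re-equivariantizing recovers the original category, i.e.\ $(\Db(X)^{\mu_n})^{\widehat{\mu_n}}\simeq\Db(X)$. By Theorem \ref{thm:main} the category inside the brackets of the corollary is exactly $\sA_X^{\mu_n}$, so it suffices to prove $(\sA_X^{\mu_n})^{\bZ/n}=\sA_X$ under this identification.

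First I would record the two stability facts that make everything well posed. On the one hand $\sA_X\subset\Db(X)$ is $\mu_n$-stable: in \eqref{eq:ordinary_kuznetsov_component_sod} each $\sB_X(i)=f^*\sB(i)$ is pulled back from $Y$, hence $\mu_n$-stable, and $\sA_X$ is their orthogonal, so $g^*\sA_X\subset\sA_X$ for all $g\in\mu_n$; thus $\sA_X^{\mu_n}$ is defined. On the other hand the dual action preserves $\sA_X^{\mu_n}$, since tensoring by a character $\rho_k$ only twists the linearisation and leaves the underlying object in $\sA_X$; it likewise preserves $\sB_X^{[0,n-1]}([0,M-1])$, merely permuting the character factors. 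Hence the semiorthogonal decomposition of Proposition \ref{prop:sod_1} is $\widehat{\mu_n}$-stable.

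With this in place the argument mirrors the proof of Proposition \ref{prop:sod_1}. Applying the equivariant semiorthogonal decomposition (Theorem \ref{thm:equivariant_sod}, in the form valid for a triangulated category equipped with a stable decomposition, \cite{elagin2012descent,elagin2014equivariant}) to the $\widehat{\mu_n}$-action on Proposition \ref{prop:sod_1} yields
\begin{equation*}
(\Db(X)^{\mu_n})^{\widehat{\mu_n}} = \big\langle (\sA_X^{\mu_n})^{\widehat{\mu_n}},\ (\sB_X^{[0,n-1]}([0,M-1]))^{\widehat{\mu_n}}\big\rangle .
\end{equation*}
The left-hand side is $\Db(X)$ with its decomposition \eqref{eq:ordinary_kuznetsov_component_sod}, and the second right-hand component re-equivariantizes to $\sB_X([0,M-1])$, since the $\widehat{\mu_n}$-equivariantization of the trivial-action completely orthogonal decomposition of Proposition \ref{prop:orthogonal_equivariant_sod} returns the original $\sB_X(i)$. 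Matching the two decompositions of $\Db(X)$ then forces $(\sA_X^{\mu_n})^{\widehat{\mu_n}}=\sA_X$, which is the claim.

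The main obstacle I anticipate is the careful bookkeeping of the equivalence $(\Db(X)^{\mu_n})^{\widehat{\mu_n}}\simeq\Db(X)$ and its compatibility with admissible subcategories: one must check that the equivalence sends a $\mu_n$-stable subcategory $\sD\subset\Db(X)$ to $(\sD^{\mu_n})^{\widehat{\mu_n}}$, so that the decomposition-matching step is legitimate, and that the dual action invoked here agrees on the nose with the one of \cite{elagin2014equivariant} (in particular that its structure cocycles $c_{\rho,\rho'}$ are trivial, so that Theorem \ref{thm:equivariant_sod} indeed applies). Once this compatibility is pinned down, the remainder is the formal comparison of decompositions above.
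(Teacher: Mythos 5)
Your proposal is correct, but it takes a noticeably longer route than the paper, whose entire proof is two lines: Elagin's double-equivariantization theorem \cite[Theorem 4.2]{elagin2014equivariant} is applied \emph{directly} to the $\mu_n$-action on the admissible subcategory $\sA_X$ (which is idempotent complete and enhanced, so the theorem applies to it as an abstract category), giving $(\sA_X^{\mu_n})^{\bZ/n}\simeq\sA_X$ in one stroke; Theorem \ref{thm:main} then identifies the bracketed category with $\sA_X^{\mu_n}$ and the corollary follows. You instead apply the double-equivariantization theorem only to the ambient category $\Db(X)$, and recover the statement for the subcategory by equivariantizing the decomposition of Proposition \ref{prop:sod_1} under the dual action and matching the result against \eqref{eq:ordinary_kuznetsov_component_sod}. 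This works: your stability checks are right, and the matching step is sound since in a two-term decomposition $\langle \sA,\sB\rangle$ the first component is determined as $\sB^\perp$. What it costs you is two extra inputs: Theorem \ref{thm:equivariant_sod} must be invoked for an action on $\Db(X)^{\mu_n}$, which is outside the quasi-projective setting in which the paper states it (legitimate, since $\Db(X)^{\mu_n}=\Db([X/\mu_n])$ is enhanced and Elagin's results extend, but it needs saying), and the compatibility of Elagin's equivalence with $\mu_n$-stable subcategories, which you rightly flag as the crux but leave unproven; that compatibility does hold, essentially because the unit of the equivalence sends $F$ to $\bigoplus_{g}g^*F$ with its canonical linearisations, which stays inside $(\sD^{\mu_n})^{\widehat{\mu_n}}$ whenever $F\in\sD$. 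What your route buys is that the double-equivariantization theorem is only ever used for honest derived categories of varieties rather than abstract admissible subcategories; what the paper's route buys is that all of your scaffolding becomes unnecessary. One caveat in the paper's favour is only partial: if the corollary is read as an equality of subcategories of $\Db(X)$ rather than an abstract equivalence, the compatibility issue you identify is implicitly present in the paper's proof as well.
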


\begin{proof}
    By \cite[Theorem 4.2]{elagin2014equivariant}, $(\sA_X^{\mu_n})^{\bZ/n}\simeq \sA_X$. The result then follows by Theorem \ref{thm:main}.
\end{proof}

\begin{example}\label{ex:sods}
Theorem \ref{thm:main} applies to the following classes of examples.
\begin{enumerate}
    \item The Veronese double cone $Y_1$, defined as the double cover of $\bP(1,1,1,2)$ branched over $Z\in |\sO_{\bP(1,1,1,2)}(6)|$. This is a smooth prime Fano threefold of index $2$ and degree $1$. In this case, $\sA_Z = \langle \Db(Z), \Db(\pt) \rangle.$ l\label{itm:veronese}
    \item The prime Fano threefold $X_2$ of index $1$ and genus $2$, which is realized as a double cover of $\bP^3$ branched in a sextic. Then $\sA_Z=\langle \Db(Z), \Db(\pt),\Db(\pt) \rangle$. \label{itm:X2}
\end{enumerate}
\end{example}

\begin{example}[The family $X_4$ and smoothings of $\frac{1}{4}(1,1)$]\label{ex:X4andStack}
Consider a Fano threefold $X$ obtained as the double cover $f\colon X \to Q$ of a quadric hypersurface $Q$ in $\bP^4$ branched over a section of $\sO_Q(2)$. The derived category of $Q$ admits a strong full exceptional collection $\Db(Q)=\langle S', \sO_Q,\sO_Q(1),\sO_Q(2) \rangle$, where $S'$ is a spinor bundle on $Q$ \cite{Kapranov}. This decomposition is not rectangular. Hence, the present setting is not covered by the results of this section. Nevertheless, we can apply the same methods and compare $\sA_X^{\mu_2}$ and $\Db(Z)$. 
Denote by $\sC = \langle S',\sO_Q,\sO_Q(1) \rangle$. By Theorem \ref{prop:sod_2}, and then arguing as in Lemma \ref{lem:right_mutation_lemma}, we obtain a decomposition
\begin{equation}
    \begin{split}
        \Db(X)^{\mu_2} &=  \langle f^*_0\sC, \sO_X(2) \otimes \rho_0, j_{0*}\Db(Z) \rangle \\
        &= \langle f^*_0\sC, j_{0*}\Db(Z), \sO_X \otimes \rho_1 \rangle.
    \end{split}
\end{equation}

On the other hand, $\sO_X=f^\ast \sO_Q$ is exceptional by Kodaira vanishing. Let $\sA_X\coloneqq \langle \sO_X \rangle^\perp$, so $\Db(X)=\langle \sA_X, \sO_X\rangle$.
Proposition \ref{prop:sod_1} then gives
\[ \Db(X)^{\mu_2} =  \langle \sA_X^{\mu_2}, \sO_X \otimes \rho_0, \sO_X \otimes \rho_1 \rangle. \]
Comparing the two decompositions we obtain
\begin{equation}\label{eq:sod_X4}
    \langle \sA_X^{\mu_2}, \sO_X \otimes \rho_0 \rangle = \langle f^*_0\sC, j_{0*}\Db(Z) \rangle.
\end{equation}
The above equality can be used to relate the K-theories of $\sA_X^{\mu_2}$ and $\Db(Z)$ (see Remark \ref{rmk:X4Hodge}).

There exists another family of double covers, i.e. double covers of $\bP(1,1,1,2)$ branched over a section $Z$ of $\sO_{\bP(1,1,1,2)}(8)$, which can be equivalently regarded as quartic hypersurfaces of $\bP\coloneqq \bP(1,1,1,2,4)$. Let $X$ be one such hypersurface. If $X$ is chosen to miss the points $[0:0:0:1:0]$ and $[0:0:0:0:1]$, then $X$ has two singularities $\frac 12(1,1,1)$ in its intersection with the line $x_0=x_1=x_2=0$, and is smooth otherwise.
Now consider the canonical stack $\sX$ associated to $X$: it has index $M=1$ and by Theorem  \ref{thm:main} there is a semiorthogonal decomposition
\[ \sA_\sX^{\mu_2} = \langle \Db(Z), \Db(\pt),\Db(\pt),\Db(\pt) \rangle. \]
We point out that the family of threefolds $X_4$ and that of hypersurfaces just constructed admit a common degeneration as follows. It is well known that the cone over a rational normal quartic curve, i.e. the plane $\bP(1,1,4)$, has smoothings both to a Veronese surface and to a quadric surface \cite[14,~Example~4]{Ste03}. Then, a cone over $\bP(1,1,4)$ may be deformed to a singular quadric threefold and to $\bP(1,1,1,2)$, which is a cone over a Veronese surface. Thus, $X_4$ may be degenerated to the double cover of a singular quadric, which in turn degenerates to a cone over $\bP(1,1,4)$.
\end{example}

\subsection{Comparison to \cite{kuznetsov2017derived}}  \label{relation to other works}

The article \cite{kuznetsov2017derived} works under the assumption $M\geq d$.
If the inequality is strict, $\Db(Z)$ admits a Kuznetsov component and a decomposition as follows. Let $\sB_Z$ be the essential image of the restriction along the inclusion $Z\subset Y$. Then
    \begin{equation}
    \label{eq:KP_Z_sod}
    \Db(Z) = \langle \sA_Z^{\mathsf{KP}}, \sB_Z, \dots, \sB_Z(M-d-1) \rangle,
\end{equation}
where $\sA_Z^{\mathsf{KP}}$ denotes here the right orthogonal, i.e. the Kuznetsov component of $\Db(Z)$, in contrast with \eqref{eq:A_Z_definition}. If $M=d$, then $\Db(Z) = \sA_Z^{\mathsf{KP}}$ (see \cite[Lemma 5.5]{kuznetsov2017derived}).

In \cite[Theorem 1.1]{kuznetsov2017derived} the authors show that for $M \geq d$ the $\mu_n$-equivariant Kuznetsov component, $\sA_X^{\mu_n}$, has a semiorthogonal decomposition into $n-1$ copies of $\sA_Z^{\mathsf{KP}}$.

Theorem \ref{thm:main} shows that, when $0<M<d$, there exists a category $\sA_Z$, defined as a subcategory of $\Db(X)^{\mu_n}$ (see (\ref{eq:A_Z_definition})), and a similar decomposition of $\sA_X^{\mu_n}$. 
Observe that $\sA_Z$ now \textit{contains} $\Db(Z)$ as a semiorthogonal component. 

    Suppose $Y$ is weighted projective space. Then $Y$ has a rectangular Lefschetz decomposition with $\sB=\langle \sO_Y\rangle$, and $\omega_Y\simeq \sO_Y(-m)$. By adjunction, $\omega_Z \simeq \sO_Z(nd-m)$,. Hence the sign of $nd-m = d-M$ determines whether $Z$ is Fano, K-trivial, or canonically polarized. The first two cases correspond to $M\geq d$ and are those considered in \cite{kuznetsov2017derived}. In summary,

\begin{center}
\begin{tikzcd}[row sep=0.1em, column sep =0.5em]
d\leq M & {\substack{\text{\cite[Lemma 5.5]{kuznetsov2017derived}}\\ \implies}} & \sA_Z^{\mathsf{KP}} \hookrightarrow \Db(Z), \\
0<M<d   & \substack{\text{Section \ref{sec:sodAX}} \\ \implies}                                 & \Db(Z) \hookrightarrow \sA_Z.             
\end{tikzcd}
\end{center}

\section{Equivariant equivalences and Hodge isometries}\label{ssec:equivar_equivalences}

\subsection{Cohomology of varieties and Hodge theory}

Let $X$ be a smooth projective variety over $\bC$ of dimension $n$. For all $k$, the singular cohomology group $H^k(X, \bZ)$ carries a Hodge structure. The complexification $H^k(X, \bC)$ decomposes as
\begin{equation*}
    H^k(X, \bC) = \bigoplus_{p+q=k} H^{p,q}(X),
\end{equation*}
where $H^{p,q}(X) \simeq H^q(X, \Omega_X^p)$, for all $p+q=k\geq 0$. We have $H^{p,q}(X) = \overline{H^{q,p}(X)}$.

Singular cohomology has the structure of a commutative ring with cup product. The class of a hyperplane section $H$ of $X$ induces the operation $- \cup H \colon H^k(X, \bZ) \to H^{k+2}(X, \bZ)$. Then, \textit{primitive cohomology} is defined as 
\begin{equation*}
    H^{n-k}_\prim(X, \bZ) \coloneqq \ker(-\cup H^{k+1} \colon H^{n-k}(X, \bZ) \to H^{n+k+2}(X, \bZ) )
\end{equation*}
for all $k$. The Hodge structure on $H^{n-k}(X,\bZ)$ restricts to one on $H^{n-k}_\prim(X, \bZ)$ \cite[II.6-7]{Voisin}.

\subsection{Topological K-theory}\label{ssec:TopKthy}

Our main references for topological K-theory are \cite{weibel} and the papers \cite{blanc2016topological}, \cite[Section 5.1]{perry2022integral}, and \cite[Section 2]{DHLncHDR}. We collect the results needed for this work below.

There is a lax monoidal functor 
\[\rK^\top  \colon \mathrm{Cat}_\bC \to \mathrm{Sp}\]
from $\bC$-linear categories to the $\infty$-category of spectra, which satisfies the following properties.

\begin{theorem}[{\cite{blanc2016topological}}] \label{thm:blanc} \leavevmode
    \begin{enumerate}
        \item If $\sC=\langle \sC_1,\ldots,\sC_m \rangle$ is a $\bC$-linear semiorthogonal decomposition, then there is an equivalence 
\begin{equation}\label{eq:KtopSOD}
        \rK^\top (\sC) \simeq \rK^\top (\sC_1) \oplus \ldots \oplus \rK^\top (\sC_m),
        \end{equation}
        where the map $\rK ^\top(\sC)\to \rK^\top (\sC_i)$ is induced by the projection functor to $\sC_i$. \label{itm:Blanc1}
        \item Let $\rK(\sC)$, $\mathrm{HN}(\sC)$, and $\HP(\sC)$ denote algebraic $\rK$-theory, negative cyclic homology, and  periodic cyclic homology of $\sC$ respectively. There is a functorial commutative square
        \begin{equation}\label{eq:KtopKalgCD}
        \begin{tikzcd}
            \rK(\sC) \rar{} \dar & \mathrm{HN}(\sC) \dar \\
            \rK^\top (\sC) \rar{} & \mathrm{HP}(\sC)
        \end{tikzcd}.    
        \end{equation} 
        \item If $X$ is a proper complex variety, then there exists a functorial equivalence $\rK^\top (\Dperf(X)) \simeq \rK^\top (X)$, the complex $\rK$-theory spectrum of $X$. Under this equivalence, the left vertical arrow in \eqref{eq:KtopKalgCD} recovers the usual inclusion of algebraic $\rK$-theory into topological $\rK$-theory, and the bottom arrow coincides with the usual Chern character under the identification of $\HP(\Dperf(X))$ with 2-periodic de Rham cohomology.
        \end{enumerate}
\end{theorem}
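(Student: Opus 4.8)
The plan is to reconstruct the proof of the three listed properties, which constitute the main results of Blanc's work. I would build $\rK^\top$ as a Bott-inverted topological refinement of algebraic K-theory. Starting from algebraic K-theory $\rK\colon\Cat_\bC\to\mathrm{Sp}$ on small idempotent-complete stable $\bC$-linear categories, one forms the \emph{semitopological} K-theory $\rK^{\mathrm{st}}(\sC)$ by geometrically realizing the simplicial spectrum obtained from tensoring $\sC$ with (singular chains on) the topological simplices, and then sets $\rK^\top(\sC)\coloneqq\rK^{\mathrm{st}}(\sC)[\beta^{-1}]$, inverting a Bott class $\beta\in\pi_2\rK^{\mathrm{st}}(\bC)$. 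Lax monoidality is inherited from that of $\rK$ and of $-\otimes_\bC-$.

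For part (1), the essential input is additivity of algebraic K-theory. A semiorthogonal decomposition $\sC=\langle\sC_1,\ldots,\sC_m\rangle$ exhibits $\sC$ as an iterated extension of the $\sC_i$, so by the additivity/localization theorem for stable $\infty$-categories one gets $\rK(\sC)\simeq\bigoplus_i\rK(\sC_i)$, with the projections induced by the projection functors. Geometric realization and the filtered colimit defining Bott inversion both commute with finite direct sums, so the same decomposition persists for $\rK^\top$, yielding \eqref{eq:KtopSOD}.

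For part (2), the square \eqref{eq:KtopKalgCD} arises by comparing all four theories at the level of the construction. The top map $\rK\to\mathrm{HN}$ is the Goodwillie--Jones Chern character to negative cyclic homology, and $\HP$ is the Tate $2$-periodicization of $\mathrm{HN}$. One checks that the Chern character is compatible with the semitopological refinement, so that after Bott inversion it induces the bottom map $\rK^\top\to\HP$; commutativity holds because periodicization on the cyclic side matches Bott inversion on the K-theoretic side.

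The depth of the theorem, and the main obstacle, lies in part (3). The first step is the point: one shows $\rK^\top(\Dperf(\bC))\simeq\mathrm{KU}$. This rests on Suslin rigidity --- that $\rK(\bC;\bZ/\ell)$ agrees with topological K-theory of a point with $\bZ/\ell$-coefficients --- together with identification of the Bott element. Globalizing to an arbitrary proper complex variety $X$ requires a descent argument: both $\rK^\top(\Dperf(-))$ and $\rK^\top((-)^{\an})$ satisfy cdh-type descent and the projective bundle formula, so a resolution/stratification induction reduces the comparison to the point case and shows the natural map $\rK^\top(\Dperf(X))\to\rK^\top(X)$ is an equivalence. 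Finally, identifying the bottom arrow with the classical Chern character uses $\HP(\Dperf(X))\simeq\prod_n H^{*+2n}_{\mathrm{dR}}(X)$ (the HKR / Feigin--Tsygan computation of periodic cyclic homology as $2$-periodic de Rham cohomology) together with naturality. Making the rigidity and descent inputs precise is where essentially all the difficulty resides.
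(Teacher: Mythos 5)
The paper does not actually prove this statement: Theorem \ref{thm:blanc} is imported as a black box from Blanc's work \cite{blanc2016topological} (with the equivariant/stacky refinements used later coming from \cite{perry2022integral} and \cite{DHLncHDR}), so there is no in-paper argument to compare yours against; what can be assessed is whether your outline faithfully reconstructs Blanc's proof. At the architectural level it does: semi-topological K-theory as a geometric realization of algebraic K-theory over simplices, Bott inversion, additivity for semiorthogonal decompositions giving (1), and compatibility of the Goodwillie--Jones character with semi-topologization and periodicization giving (2) is exactly Blanc's skeleton. One correction to the construction itself: the simplicial spectrum is formed by tensoring $\sC$ with the coordinate rings $\mathcal{O}(\Delta^n_{\mathrm{alg}})$ of the \emph{algebraic} simplices $\Spec \bC[x_0,\dots,x_n]/(\sum_i x_i-1)$, not with ``singular chains on the topological simplices'' (the latter is not even a diagram of commutative $\bC$-algebras). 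This is not cosmetic: realization over the algebraic simplices is what forces $\mathbb{A}^1$-invariance of the resulting theory, which is precisely what the descent arguments you invoke later depend on.

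The genuine gap is in part (3), where you claim that cdh-type descent plus the projective bundle formula ``reduces the comparison to the point case.'' Two things go wrong. First, algebraic K-theory of perfect complexes does \emph{not} satisfy cdh descent; before any such argument one needs the nontrivial intermediate result that semi-topologization converts $\rK$ into homotopy K-theory $\mathrm{KH}$ (again via $\mathbb{A}^1$-invariance after realization), and only then is cdh descent available to treat singular $X$. Second, even granting descent, smooth proper varieties are not built from points by projective bundles and blow-ups, so no induction of the kind you describe can terminate at $\Spec\bC$. The irreducible geometric input in Blanc's proof is the Friedlander--Walker comparison theorem --- Bott-inverted semi-topological K-theory of a \emph{smooth} quasi-projective complex variety agrees with $\mathrm{KU}$ of its analytification --- whose proof is where the Suslin-rigidity arguments with finite coefficients actually live; cdh descent then extends the equivalence from the smooth case to arbitrary (in particular proper, possibly singular) varieties. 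As written, your sketch substitutes a reduction-to-a-point that cannot work for the theorem that carries all of the content of part (3).
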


For $\sC \in \Cat_{\bC}$ and $t$ an integer, write $\rK_t^\top(\sC) = \pi_t(\rK^\top (\sC))$.
If in addition the category $\sC$ is proper, then for each integer $t$ there is a bilinear Euler pairing $\chi^\top\colon \rK_t^\top(\sC) \otimes \rK_t^\top(\sC) \to \bZ$ induced by the evaluation functor 
\[ \sH\mathrm{om}(-,-)\colon \sC^{\mathrm{op}} \otimes_{\Dperf(\Spec(\bC))} \sC \to \Dperf(\Spec(\bC)), \] where $\Dperf$ denotes the category of perfect complexes. The Euler pairing satisfies the following properties \cite[Lemma 5.2]{perry2022integral}:
\begin{itemize}
    \item The pairing is compatible with the inclusions of $\rK_t^\top(\sC_i)$, and it makes \eqref{eq:KtopSOD} into a semiorthogonal sum (i.e. $\chi^\top(v_i,v_j)=0$ for $v_i\in \rK_t^\top(\sC_i)$, $v_j\in \rK_t^\top(\sC_j)$, $i>j$);
    \item when restricted to $\rK_0(\sC)$ via the inclusion in \eqref{eq:KtopKalgCD}, $\chi^\top$ coincides with the usual Euler pairing 
    \[\chi(E,F)=\sum_{i\in \bZ}(-1)^i \dim \Hom^i_\bC(E,F).\]
    \item (Riemann--Roch) If $\sC=\Dperf(X)$, with $X$ a proper complex variety, then for $v,w\in \rK_t^\top(\sC)$ we have $\chi^\top(v,w)=p_*(v^\vee \otimes w) \in \rK_{2t}^\top(\Spec(\bC))\simeq \bZ$, with $p\colon X \to \Spec(\bC)$ the structure morphism.
\end{itemize}

Now we suppose $\sC$ is an admissible subcategory of the derived category of a proper smooth DM quotient stack, to leverage results on the non-commutative Hodge-to-de Rham spectral sequence. We say that $\rK^\top (\sC)$ carries a \textit{pure Hodge structure} if every homotopy group $\rK_{t}^\top(\sC)$ carries a pure Hodge structure of weight $-t$. 

\begin{proposition}\label{prop:HS_C_in_Stack}
Let $\sC$ be an admissible subcategory of $\Dperf(\cX)$, for $\cX=[X/G]$ a proper smooth DM stack.
\begin{enumerate}
    \item $\rK^\top (\sC)$ comes endowed with a canonical pure Hodge structure with graded pieces 
    \begin{equation}
\mathrm{gr}^p(\rK_t^\top(\sC)_\bC) \simeq \mathrm{HH}_{t+2p}(\sC),      
    \end{equation}
 where $\HH_{t}(\sC)$ denotes the $t$-th Hochschild homology group of $\sC$.
\item For $\sC = \Dperf(\cX)$, the Chern character induces an isomorphism of Hodge structures
\begin{equation}\label{eq:KPerfHS}
    \rK_t^\top(\Dperf(\cX))_\bQ \simeq \bigoplus_{k \in \bZ} H^{2k - t}(I_{\cX}, \bQ)(k), 
\end{equation}
where $I_{\cX}$ denotes the (underived) inertia stack, and the summands on the right-hand side are its de Rham cohomology groups.
\item If $\cX=X$ is a smooth proper complex variety, the summands in \eqref{eq:KPerfHS} coincide with $H^{2k-t}(X,\bQ)(k)$, the $k$-th Tate twists of rational singular cohomology groups.
\end{enumerate}
\end{proposition}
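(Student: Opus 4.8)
The plan is to follow \cite[Section 5]{perry2022integral} and \cite{DHLncHDR} essentially verbatim, isolating the two places where the hypothesis ``$X$ is a variety'' is actually used and replacing them by their orbifold counterparts. The formal backbone --- the existence of the Hodge structure in (1) --- requires only that $\sC$ be a smooth and proper $\bC$-linear category, so the first thing I would check is that $\Dperf(\cX)$ is smooth and proper for $\cX=[X/G]$ a smooth proper DM stack, and that these properties descend to any admissible subcategory $\sC$ (smoothness and properness are inherited by semiorthogonal components). Granting this, the content of parts (2) and (3) lies in the geometric identification of the abstract invariants with cohomology, where the inertia stack enters; this is where the genuinely new input is needed.

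For part (1): once $\sC$ is smooth and proper, Kaledin's noncommutative Hodge-to-de Rham degeneration theorem applies, so the spectral sequence abutting to $\HP(\sC)$ degenerates and the induced filtration satisfies $\gr^p(\rK_t^\top(\sC)_\bC) \simeq \HH_{t+2p}(\sC)$ in each degree $t$. Combining this with Blanc's comparison $\rK^\top(\sC)_\bC \simeq \HP(\sC)$ \cite{blanc2016topological} and the integral lattice $\rK_t^\top(\sC)$ (finitely generated by properness) yields a pure Hodge structure of weight $-t$ with the stated graded pieces, exactly as in \cite{perry2022integral, DHLncHDR}. I emphasize that none of these steps uses that the ambient category is $\Dperf$ of a scheme rather than of a stack; they are formal consequences of smoothness and properness of $\sC$, which are established at the outset.

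For parts (2) and (3): I would specialize to $\sC=\Dperf(\cX)$ and identify the invariants concretely. The key ingredient is the orbifold Hochschild--Kostant--Rosenberg isomorphism, which computes $\HH_*(\Dperf(\cX))$ (hence $\HP$) as the cohomology of the inertia stack $I_\cX = \bigsqcup_{[g]}[X^g/C_G(g)]$; concretely, $\HH_*$ of the quotient is assembled from the cohomologies $H^*(X^g)$ of the fixed loci and their $G$-invariants. Under Blanc's comparison this matches $\rK^\top$ rationally, and the Chern character for stacks realizes the isomorphism $\rK_t^\top(\Dperf(\cX))_\bQ \simeq \bigoplus_k H^{2k-t}(I_\cX,\bQ)(k)$ compatibly with the Hodge filtrations, giving the isomorphism of Hodge structures in (2). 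Part (3) is then the special case in which $G$ acts so that $\cX=X$ is a variety: the inertia stack degenerates to $I_\cX=X$, the sum over twisted sectors collapses, and the formula reduces to the classical Chern character isomorphism $\rK_t^\top(\Dperf(X))_\bQ \simeq \bigoplus_k H^{2k-t}(X,\bQ)(k)$, consistent with Theorem \ref{thm:blanc}(3).

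The hard part will be part (2): establishing that the topological $\rK$-theory (equivalently $\HP$) of $\Dperf(\cX)$ is computed by the inertia stack, and that the stacky Chern character intertwines the noncommutative Hodge filtration with the filtration coming from the $(k)$-graded cohomology of $I_\cX$. This requires the orbifold HKR theorem together with the compatibility of the Chern character with these filtrations; in particular one must track the Tate twists carefully so that the age-shifts familiar from Chen--Ruan cohomology are absorbed into the $2$-periodic reindexing on the right-hand side. The remaining steps --- the verification of smoothness and properness and the formal construction of the Hodge structure in (1) --- are routine given the results recalled above.
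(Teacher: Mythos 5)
There is a genuine gap in your part (1), at the step where you invoke ``Blanc's comparison $\rK^\top(\sC)_\bC \simeq \HP(\sC)$'' for an arbitrary smooth proper category $\sC$. This statement is not a theorem of \cite{blanc2016topological}: for general smooth proper dg categories it is precisely Blanc's \emph{lattice conjecture}, which is open. Blanc proves it only for $\Dperf$ of a scheme, and \cite{DHLncHDR} extends it to $\Dperf$ of a smooth proper DM quotient stack; neither result applies verbatim to an admissible subcategory, and the property does \emph{not} follow formally from smoothness and properness of $\sC$ (unlike the degeneration property). The missing argument --- and the one the paper actually uses --- is a descent-to-summands step: by the functoriality of the square \eqref{eq:KtopKalgCD}, a semiorthogonal decomposition splits both $\rK^\top$ and $\HP$ into direct sums compatibly with the comparison map, i.e.\ $\sC$ is a direct summand of $\Dperf(\cX)$ in the category of noncommutative motives, and an isomorphism of the ambient objects restricts to an isomorphism on each summand. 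Granting the lattice property for $\Dperf(\cX)$ itself (\cite[Corollary 2.19]{DHLncHDR}), this yields it for $\sC$. Without this reduction your construction of the integral Hodge structure on $\rK_t^\top(\sC)$ does not get off the ground, because you have no identification of $\rK_t^\top(\sC)_\bC$ with the filtered vector space $\HP_t(\sC)$.

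The rest of your outline is essentially sound and partly diverges from the paper in an acceptable way. Your route to the degeneration property --- admissible subcategories of smooth proper categories are smooth and proper, so Kaledin's theorem applies to $\sC$ directly --- is a legitimate alternative to the paper's argument, which instead shows degeneration is preserved under motivic summands (\cite[Lemma 1.22]{DHLncHDR}) and quotes it for $\Dperf(\cX)$ (\cite[Corollary 1.23]{DHLncHDR}); both work. For parts (2) and (3), what you label ``the hard part'' (computing $\HP$ of $\Dperf(\cX)$ via the inertia stack and matching Hodge filtrations under the Chern character) does not need to be re-proved: it is exactly \cite[Proposition 2.13]{DHLncHDR} for stacks and \cite{weibel} for varieties, which is all the paper cites at this step. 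So the only substantive repair needed is the summand/functoriality argument for the lattice property described above.
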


\begin{proof}
To prove these statements, we claim that the map $\rK^\top (\sC)_\bC \to \HP(\sC)$ in \eqref{eq:KtopKalgCD} is an isomorphism (this is referred to as the \textit{lattice property for $\sC$}), and that the non-commutative Hodge-to-de Rham sequence degenerates for $\sC$ (we say that $\sC$ has the \textit{degeneration property}). Granting the claims, the degeneration property gives a canonical filtration of $\HP_n(\sC)$ with $p$-th graded piece $\HH_{n+2p}(\sC)$, which together with the lattice property shows (1). 

There are identifications $\mathrm{HP}_t(\Dperf(\cX)) \simeq \oplus_{k \in \bZ} H^{2k - t}(I_{\cX}, \bQ)$ by \cite[Proposition 2.13]{DHLncHDR}, and $\mathrm{HP}_t(\Dperf(X))\simeq H^{2k-t}(X,\bQ)$ by \cite{weibel}, under which the non-commutative Hodge filtration coincides with the usual Hodge filtrations. This proves parts (2) and (3).

To establish the claims, observe first that they are preserved under direct summands and arbitrary direct sums in the category of noncommutative motives (defined in \cite{tabuada}). Indeed, this holds for the lattice property because of the functoriality of \eqref{eq:KtopKalgCD}, and for the degeneration property by \cite[Lemma 1.22]{DHLncHDR}. Therefore, it is sufficient to establish them in the case $\sC=\Dperf(\cX)$, for which they are proven, respectively, in \cite[Corollary 2.19]{DHLncHDR} and \cite[Corollary 1.23]{DHLncHDR}.
\end{proof}

\subsection{Equivalences of Fourier--Mukai type induce Hodge isometries}

Consider smooth proper DM stacks that are global quotients, $[X_j/G_j]$, for $j=1,2$.
Fix admissible subcategories $i_1\colon\sK_1 \to \Db(X_1)^{G_1}$ and $i_2\colon\sK_2 \to \Db(X_2)^{G_2}$, and let $i_j^*$ denote the left adjoints to the inclusions for $j=1,2$.
Recall that an equivalence $\phi \colon \sK_1 \to \sK_2$ is said to be of \textit{Fourier--Mukai type} if the composition $\psi \colon \Db(X_1)^{G_1} \to \Db(X_2)^{G_2}$ given by
\begin{equation}
    \label{eq:FMtypeComp}
    \Db(X_1)^{G_1} \xrightarrow{i_1^*} \sK_1 \xrightarrow{\phi} \sK_2 \xrightarrow{i_2} \Db(X_2)^{G_2}
\end{equation}

is a Fourier--Mukai functor.
Denote by $(\sE_j,e_j)$ a dg-enhancement for each $\Db(X_j)^{G_j}$, which in turn induce enhancements $(\sF_j,f_j)$ for $\sK_j$. In this setting:

\begin{lemma}\label{lem:lift}
An equivalence $\phi \colon \sK_1 \to \sK_2$ of Fourier--Mukai type lifts to an equivalence of dg-enhancements $(\sF_1,f_1) \to (\sF_2,f_2)$. 
\end{lemma}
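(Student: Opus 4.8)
The plan is to lift $\phi$ by first lifting the ambient Fourier--Mukai functor $\psi$ and then conjugating by the dg-lifts of the inclusion and projection functors of the admissible subcategories $\sK_j$. Throughout I would work in the homotopy category of small $\bC$-linear dg-categories, where a lift of an exact functor means a quasi-functor inducing it on homotopy categories.

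First I would lift $\psi$. Since $\psi \colon \Db(X_1)^{G_1} \to \Db(X_2)^{G_2}$ is assumed to be a Fourier--Mukai functor, it is represented by a kernel on the product of the two quotient stacks (using $\Db(X_j)^{G_j} \simeq \Db([X_j/G_j])$, and that these stacks are smooth and proper). By Toën's derived Morita theory, such a kernel corresponds to an object of the internal Hom $\mathbf{R}\mathcal{H}om(\sE_1,\sE_2)$ in the homotopy category of dg-categories, i.e.\ to a quasi-functor $\tilde\psi \colon \sE_1 \to \sE_2$ lifting $\psi$.

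Next I would produce dg-lifts of the inclusion and projection for each $\sK_j$. Enhancing $\sK_j$ as the full dg-subcategory $\sF_j \subset \sE_j$ spanned by its objects, the inclusion $i_j$ lifts tautologically to the dg-inclusion $\iota_j \colon \sF_j \hookrightarrow \sE_j$. For the left adjoint $i_j^*$, I would use that $\sK_j$ is admissible, so the semiorthogonal decomposition it defines lifts to the dg-level and the projection onto $\sK_j$ is realized by a quasi-functor $\pi_j \colon \sE_j \to \sF_j$; equivalently, the projection is itself of Fourier--Mukai type and one invokes derived Morita theory again. This is the step that requires the existence, uniqueness, and functoriality of enhancements for equivariant categories of smooth proper DM stacks, which I would justify through the results of Lunts--Orlov and Canonaco--Stellari, adapted to the global-quotient setting.

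Finally, since each $i_j$ is fully faithful we have $i_j^* \circ i_j \simeq \id_{\sK_j}$, so that $\phi \simeq i_2^* \circ \psi \circ i_1$. Composing the lifts yields a quasi-functor
\[ \pi_2 \circ \tilde\psi \circ \iota_1 \colon \sF_1 \to \sF_2 \]
inducing $\phi$ on homotopy categories. As $\phi$ is an equivalence, and a quasi-functor inducing an equivalence on homotopy categories is automatically a quasi-equivalence, this gives the desired equivalence of dg-enhancements $(\sF_1,f_1) \to (\sF_2,f_2)$. I expect the main obstacle to be the third step: ensuring that the projection functors onto the admissible subcategories lift compatibly, which hinges on the compatibility of enhancements with semiorthogonal decompositions in the equivariant/DM-stack setting rather than in the more familiar case of smooth projective varieties.
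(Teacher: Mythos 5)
Your proposal is correct and follows essentially the same route as the paper: lift the ambient Fourier--Mukai functor $\psi$ to the enhancements (the paper cites To\"en's theorem together with its equivariant extension, where you invoke derived Morita theory on the quotient stacks), take dg-lifts of the inclusions and their left adjoints, and conclude that the composite $i_2^{*\mathrm{dg}}\circ \Psi \circ i_1^{\mathrm{dg}}$ induces $\phi$ on homotopy categories via $i_j^*\circ i_j \simeq \id$.
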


\begin{proof}
    The Fourier--Mukai assumption means that the composition $\psi$ in \eqref{eq:FMtypeComp}
    is of Fourier--Mukai type, whence it lifts to a functor $\Psi\colon (\sE_1,e_1) \to (\sE_2,e_2)$ (this is \cite[Theorem 8.9]{toen2007homotopy} for schemes, and \cite[Corollary~3.7]{Kung22} for the equivariant version). Let $i_j^{\mathrm{dg}}$ and $i_j^{*\mathrm{dg}}$ denote the dg-lifts of the inclusions and their left adjoint functors. Define $\Phi \coloneqq i_2^{*\mathrm{dg}}\circ \Psi \circ i_1^{\mathrm{dg}}$. The functor $\Phi \colon (\sF_1,f_1) \to (\sF_2,f_2)$ lifts $\phi$. In fact, taking cohomology, we have
     \[H^0(\Phi) \simeq  i_2^{*}\circ H^0(\Psi) \circ i_1 \simeq i_2^{*}\circ i_2 \circ \phi \circ i_1^* \circ i_1 \simeq \phi. \qedhere\]
\end{proof}

Observe that Proposition \ref{prop:HS_C_in_Stack} applies to each $\sK_j$, and endows each $\rK^\top (\sK_j)$ with Hodge structures. Then we have the following Proposition.

\begin{proposition}\label{prop:k_theory_hs_preserved}
    A Fourier--Mukai type equivalence $\phi \colon \sK_1 \to \sK_2$ induces a Hodge isometry $\rK^\top (\sK_1)\simeq \rK^\top (\sK_2)$.
\end{proposition}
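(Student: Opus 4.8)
The plan is to deduce everything from the functoriality of the noncommutative invariants appearing in Theorem \ref{thm:blanc} and Proposition \ref{prop:HS_C_in_Stack}, applied to the dg-lift of $\phi$ provided by Lemma \ref{lem:lift}. Recall that to be a Hodge isometry the induced map must (i) be an isomorphism of the pure Hodge structures of Proposition \ref{prop:HS_C_in_Stack}, and (ii) preserve the Euler pairing $\chi^\top$. First I would invoke Lemma \ref{lem:lift} to obtain an equivalence of dg-enhancements $\Phi \colon (\sF_1, f_1) \to (\sF_2, f_2)$. Since Blanc's $\rK^\top$ is a functor out of $\Cat_\bC$ sending equivalences to equivalences \cite{blanc2016topological}, applying it to $\Phi$ yields an equivalence of spectra $\rK^\top(\sK_1) \simeq \rK^\top(\sK_2)$, and taking homotopy groups gives isomorphisms $\phi_\ast \colon \rK_t^\top(\sK_1) \xrightarrow{\sim} \rK_t^\top(\sK_2)$ for every $t$.

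For (i), recall that the Hodge structure of Proposition \ref{prop:HS_C_in_Stack} is assembled from two functorial ingredients: the lattice property, i.e. the isomorphism $\rK^\top(\sK_j)_\bC \simeq \HP(\sK_j)$ coming from the functorial square \eqref{eq:KtopKalgCD}, and the filtration of $\HP(\sK_j)$ furnished by the degeneration of the noncommutative Hodge-to-de Rham sequence, whose $p$-th graded piece is $\HH_{\bullet+2p}(\sK_j)$. Both periodic cyclic homology $\HP$ and Hochschild homology $\HH$ are additive invariants of dg-categories \cite{tabuada}, hence $\Phi$ induces isomorphisms $\HP(\sK_1) \simeq \HP(\sK_2)$ and $\HH(\sK_1) \simeq \HH(\sK_2)$; moreover these are compatible with the degeneration filtration, since that filtration is itself natural in the dg-category \cite{DHLncHDR}. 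Because the square \eqref{eq:KtopKalgCD} is functorial, the lattice isomorphism intertwines $\phi_\ast \otimes \bC$ with the map induced by $\Phi$ on $\HP$. Consequently $\phi_\ast \otimes \bC$ respects the Hodge filtration and induces the isomorphisms $\HH_{t+2p}(\sK_1) \simeq \HH_{t+2p}(\sK_2)$ on graded pieces, so $\phi_\ast$ is an isomorphism of pure Hodge structures.

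For (ii), observe that the pairing $\chi^\top$ is induced by the evaluation functor $\sH\mathrm{om}(-,-)\colon \sK_j^{\op} \otimes \sK_j \to \Dperf(\Spec(\bC))$, which is intrinsic to the category and preserved under equivalence: the dg-equivalence $\Phi$ identifies the mapping complexes of $\sK_1$ with those of $\sK_2$, so the two evaluation functors agree up to natural isomorphism, and applying $\rK^\top$ shows $\chi^\top(\phi_\ast v, \phi_\ast w) = \chi^\top(v, w)$. Combining (i) and (ii) gives the desired Hodge isometry. The step requiring the most care is (i): one must check that the single map $\phi_\ast$ simultaneously matches, under the various identifications, both the induced map on $\HP$ and the induced maps on the graded pieces $\HH$. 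This is exactly where the functoriality of the square \eqref{eq:KtopKalgCD} and the naturality of the degeneration filtration of \cite{DHLncHDR} are essential; the remaining verifications are formal consequences of $\rK^\top$, $\HP$, and $\HH$ all being functors of dg-categories.
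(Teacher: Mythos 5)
Your proposal is correct and follows essentially the same route as the paper's own proof: the dg-lift from Lemma \ref{lem:lift}, functoriality of $\rK^\top$ together with the evaluation-functor definition of $\chi^\top$ for the isometry, and the motivic/dg-invariance of the noncommutative Hodge-to-de Rham data for compatibility with the Hodge structures. You simply spell out in more detail (lattice property, degeneration filtration, the functorial square \eqref{eq:KtopKalgCD}) what the paper compresses into two sentences, and if anything you are more careful in applying $\rK^\top$ to the dg-lift rather than to $\phi$ directly.
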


\begin{proof}
By the functoriality of $\rK^\top $, and since the Euler pairing is defined through a canonical evaluation map, we immediately get isometries $\rK^\top (\sK_1)\simeq \rK^\top (\sK_2)$.

Since $\phi$ is of Fourier--Mukai type, it admits a lift $\Phi\colon (\sF_1,f_1)\to (\sF_2,f_2)$ to the dg-enhancements by Lemma \ref{lem:lift}. The construction of the Hodge filtration on $\rK^\top $ only depends on the noncommutative Hodge-to-de Rham spectral sequence, which is a motivic invariant of the dg-enhancements.  
Then $\rK^\top (\sK_1)\simeq \rK^\top (\sK_2)$ is a Hodge isometry.
\end{proof}

\section{Application: weighted double solids}\label{sec:doublesolids}

In this section we focus on the three-dimensional case. 

\subsection{Setup} We collect here our assumptions and notation.

\begin{definition}
    \label{setup}
We say that $X$ is a \textit{weighted double solid} if $X$ is a three-dimensional smooth DM stack equipped with a 2:1 map to a weighted projective space $Y$ (regarded as a smooth stack), branched over a divisor $Z\in |\sO_Y(2d)|$. We say that $X$ is \textit{prime} if it has Picard rank 1.
\end{definition}

 In the notation of $\ref{ssec:setup_covers}$, a weighted double solid satisfies $n=2$ and $M=m-d$. Here, $m$ is the sum of the weights of the coordinates of $Y$. We will sometimes need an additional generality assumption, which ensures that $Z$ has Picard rank 1 (this holds for $X$ prime and very general). We refer to $X$ in this case as a \textit{very general} weighted double solid.

\begin{remark} \label{rem:X_is_Fano} 
A weighted double solid with $0<M$ is Fano. Indeed, denote by $\sO_X(1)$ the (ample) pull-back of $\sO_Y(1)$. Then, by the Riemann--Hurwitz formula we have $K_X = f^* K_{Y} + \sO_X(d) = \sO_X(-m + d) = \sO_X(-M)$, and $M>0$ is the index of $X$. Similarly, $K_Z = \sO_Z(-m + 2d)$, so $Z$ is canonically polarized if $M<d$ and $K$-trivial if $M=d$.
\end{remark}

Koll\'ar's \cite[Theorem~29]{KollarHyper} implies the following Lemma \ref{lem:ram_div_iso} even without the generality assumption on $X,X'$ in all cases except quartic surfaces in $\bP^3$.
For completeness, we include a short argument which also covers quartic surfaces.

\begin{lemma} \label{lem:ram_div_iso}
    Let $X,X'$ be weighted double solids, with $X$ very general. If there is an isomorphism $Z \simeq Z'$, then $X \simeq X'$. 
\end{lemma}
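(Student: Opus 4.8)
The plan is to recover $X$ from the branch surface together with its natural polarization $\sO_Z(1)$, and then to show that any abstract isomorphism $Z\simeq Z'$ automatically respects these polarizations. Recall that the double cover $f\colon X\to Y$ is determined up to isomorphism by the embedded divisor $Z\hookrightarrow Y$ together with a square root of $\sO_Y(Z)=\sO_Y(2d)$; since $\Pic(Y)=\bZ\cdot\sO_Y(1)$ is torsion-free, this square root is forced to be $\sO_Y(d)$ and is therefore unique. In turn, the pair $(Y, Z\hookrightarrow Y)$ can be read off from the polarized surface $(Z,\sO_Z(1))$: the intermediate cohomology of line bundles on the weighted projective space $Y$ vanishes, so the restriction maps $H^0(Y,\sO_Y(k))\to H^0(Z,\sO_Z(k))$ are surjective and the section ring $\bigoplus_{k\geq 0}H^0(Z,\sO_Z(k))$ is a hypersurface ring $\bC[x_0,\dots,x_3]/(f)$. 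From this graded ring one reads off the generators (in degrees $a_i<2d$) and the single relation $f$ (in degree $2d$, up to scalar), recovering $Y=\Proj\bC[x_0,\dots,x_3]$, the embedding $Z=V(f)$, and finally $X$. Consequently it suffices to upgrade a given isomorphism $\alpha\colon Z\xrightarrow{\sim}Z'$ to an isomorphism of \emph{polarized} surfaces, i.e.\ to establish $\alpha^*\sO_{Z'}(1)\simeq\sO_Z(1)$.

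When $M\neq d$, the canonical bundle $K_Z=\sO_Z(d-M)$ of Remark \ref{rem:X_is_Fano} is a nonzero power of $\sO_Z(1)$, and the claim follows from Koll\'ar's \cite[Theorem 29]{KollarHyper} without any generality hypothesis. The remaining case is $M=d$, where $Z$ is a K3 surface; by Remark \ref{rem:X_is_Fano} the relevant family is that of quartic double solids, so $Y=\bP^3$ and $Z,Z'$ are quartic surfaces with $\sO_Z(1)^2=\sO_{Z'}(1)^2=4$. Here $K_Z\simeq\sO_Z$ carries no information, and the argument instead rests on the parity of the K3 intersection form. Since $X$ is very general, the generality assumption gives $\Pic(Z)=\bZ\cdot\sO_Z(1)$, so $\sO_Z(1)$ is the primitive ample generator. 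For $Z'$, write $\sO_{Z'}(1)\simeq (g')^{\otimes p'}$ with $g'$ a primitive ample generator of the rank-one lattice $\Pic(Z')\simeq\Pic(Z)$; then $p'^2\,(g')^2=4$ with $(g')^2$ a positive \emph{even} integer, which forces $p'=1$. Thus $\sO_{Z'}(1)$ is also primitive, and since $\alpha^*$ is a lattice isometry preserving the ample cone it must carry one primitive ample generator to the other, giving $\alpha^*\sO_{Z'}(1)\simeq\sO_Z(1)$.

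With the polarizations matched, $\alpha$ induces an isomorphism of the section rings of the first paragraph, hence of the pairs $(\bP^3,Z)$ and $(\bP^3,Z')$, and therefore of the double covers $X\simeq X'$. The main obstacle is exactly the $K$-trivial case $M=d$: there the canonical bundle is useless, so one cannot invoke Koll\'ar and must pin down the polarization intrinsically. The decisive input is that a very general quartic double solid has $\Pic(Z)$ generated by $\sO_Z(1)$, combined with the fact that the even intersection form of a K3 surface rules out any nontrivial divisibility of a degree-$4$ polarization; this is precisely what makes the short argument go through for quartic surfaces in $\bP^3$.
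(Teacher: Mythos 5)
Your overall strategy --- reconstruct $X$ from the polarized surface $(Z,\sO_Z(1))$ via its section ring, then show that an abstract isomorphism $Z\simeq Z'$ must match the polarizations --- is sound and is in fact the paper's strategy as well; your handling of quartic surfaces (Picard rank $1$ from very generality, isomorphisms carry ample generators to ample generators) is essentially the paper's argument with the primitivity of $\sO_{Z'}(1)$ made explicit via evenness of the K3 lattice. The genuine gap is in your case division: the claim that $M=d$ forces ``the relevant family is that of quartic double solids, so $Y=\bP^3$'' is false, and Remark~\ref{rem:X_is_Fano} does not assert it --- it only says that $Z$ is $K$-trivial when $M=d$. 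Definition~\ref{setup} allows arbitrary weighted projective $3$-spaces, and $M=d$ (equivalently $m=2d$) occurs for many $Y\neq\bP^3$: for instance, the double cover of $\bP(1,1,1,3)$ branched over a very general sextic is a weighted double solid with $M=d=3$ whose branch divisor is a degree-$2$ K3 surface (a double plane; it misses the stacky point), not a quartic; likewise $\bP(1,1,2,2)$ with $2d=6$, $\bP(1,2,2,3)$ with $2d=8$, and so on. As written, your two cases do not exhaust the weighted double solids of Definition~\ref{setup}, so you have proved the lemma only when the $K$-trivial case happens to be a quartic surface in $\bP^3$.

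The gap is partly repairable inside your framework: for an honest K3 branch divisor the evenness argument works verbatim with $4$ replaced by $\sO_Z(1)^2$ (e.g.\ $p'^2(g')^2=2$ still forces $p'=1$ for $\bP(1,1,1,3)$). But it genuinely breaks when $Z$ is itself stacky, as for $\bP(1,1,2,2)$, where $\sO_Z(1)^2=3/2$ and the intersection form on $\Pic(Z)$ is neither even nor integral. The paper avoids any case split: very generality gives Picard rank $1$ for $Z$, hence for $Z'$; an isomorphism $\phi$ then satisfies $\phi^*(h')=h$ for the ample generators; and since $Z$ and $Z'$ lie in the same linear system $|\sO_Y(2d)|$, the equality $\sO_Z(1)^2=\sO_{Z'}(1)^2$ forces $\sO_Z(1)=h^p$ and $\sO_{Z'}(1)=(h')^{p'}$ to have $p=p'$, with no parity input. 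This identifies the section rings, hence the embeddings into $Y$ up to projective transformation, hence the covers --- uniformly in all cases, stacky ones included. I would restructure your proof along these lines (keeping your nice explicit description of the reconstruction step, which the paper leaves implicit) rather than patching the case analysis; your appeal to Koll\'ar for $M\neq d$ is consistent with the paper's remark preceding the lemma, but it cannot carry the remaining $K$-trivial cases on its own.
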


\begin{proof} 
Denote the isomorphism of branch divisors $\phi \colon Z \simeq Z'$. Since $X$ is very general, $Z$ is Picard rank $1$, and hence so is $Z'$. This means that $\phi^*(h')=h$ where $h,h'$ are the ample generators of $\Pic(Z),\Pic(Z')$, respectively. Therefore, global sections of (multiples of) $h$ and $h'$ are identified, and hence
the embeddings of $Z,Z'$ into $\bP^3$ or $\bP(w_0, \dots, w_3)$ coincide up to a projective transformation. Hence, the covers $X \simeq X'$ are isomorphic.
\end{proof}

In the rest of the paper, we will often assume that weighted double solids satisfy $0<M\leq d$. This wil imply that the results of Section \ref{sec:covers} hold, including the limit case $\sA_Z \simeq \Db(Z)$.

\subsection{Hodge theory and Kuznetsov components}
Here we use the results of Section \ref{ssec:equivar_equivalences} to relate equivalences of Kuznetsov components to the cohomology of the ramification divisors.
Unless stated otherwise, we fix a weighted double solid $X$ with $0<M\leq d$. 
By Theorem \ref{thm:main}, we have the following semiorthogonal decomposition of the $\mu_n$-equivariant Kuznetsov component.
\begin{equation*}
    \sA_X^{\mu_2} = \langle \bL_{\sB} j_{0*} \Db(Z) , \cE_1, \dots, \cE_{d-M} \rangle \subset \Db(X)^{\mu_2},
\end{equation*}
where the $\cE_i$ are exceptional objects, so that $\langle \cE_i \rangle\simeq \Db(\pt)$.
By the discussion in Section \ref{ssec:TopKthy}, we have an isomorphism of Hodge structures 
\begin{equation}
    \label{eq:Ktop(AXm)}
    \rK_{-t}^\top(\sA_X^{\mu_2}) \simeq \rK_{-t}^\top(\Db(Z)) \oplus \bigoplus_{i=1}^{d-M}\rK_{-t}^\top(\Db(\pt))
\end{equation}
for all integers $t$.

If $\sC$ is a proper category, we will write 
\[\rK_0(\sC)^\perp \coloneqq \{ e \in \rK_0^\top(\sC) \mid \chi^{\mathrm{top}}(a,e)=0 \mbox{ for all } a\in \rK_0(\sC) \}.\]
Recall that the Chern character induces a functorial isomorphism of Hodge structures $\rK_0^\top(\Db(Z))_\bQ\simeq H^{\mathrm{even}}(Z,\bQ)$, by Proposition \ref{prop:HS_C_in_Stack}. 
We denote its image in $H^{\mathrm{even}}(Z,\bQ)$ by $\Lambda_Z$. Observe that, since $Z$ is a smooth surface, $\Lambda_Z \subset H^0(Z,\bZ)\oplus H^2(Z,\bZ) \oplus \frac 12H^4(Z,\bZ)$.

\begin{lemma}\label{lem:orthog_to_equiv_Kuznetsov_component_contain_in_primitive}
    Suppose that $X$ is a prime weighted double solid with $0<M$. Then
    \begin{enumerate}
        \item there is an isometry $\rK_0(\sA_X^{\mu_2})^\bot\simeq \rK_0(\Db(Z))^\bot$.
        \item The Chern character map in Proposition \ref{prop:HS_C_in_Stack} restricts to an isometry on $\rK_0(\Db(Z))^\perp$, and its image is contained in $H^2_{\prim}(Z,\bZ)\subset\Lambda_Z$.
    \end{enumerate}
\end{lemma}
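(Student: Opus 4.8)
The plan is to prove the two parts in turn: part (1) is a formal consequence of the splitting of topological $\rK$-theory over the semiorthogonal decomposition
\[\sA_X^{\mu_2} = \langle \bL_\sB j_{0*}\Db(Z), \cE_1, \ldots, \cE_{d-M}\rangle\]
recorded above, while part (2) is a Riemann--Roch computation. Throughout I use that, by Theorem \ref{thm:blanc}(1) and the Euler-pairing properties collected in Section \ref{ssec:TopKthy}, this decomposition makes \eqref{eq:Ktop(AXm)} into a semiorthogonal sum, and that $\Db(Z)$ sits as its first component.

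For part (1), I would write a general element of $\rK_0^\top(\sA_X^{\mu_2})$ as $e = e_Z + \sum_{i=1}^{d-M} n_i[\cE_i]$ with $e_Z \in \rK_0^\top(\Db(Z))$ and $n_i \in \bZ$. The algebraic subgroup $\rK_0(\sA_X^{\mu_2})$ contains $\rK_0(\Db(Z))$ together with every $[\cE_i]$. Pairing $e$ on the left against $[\cE_j]$ and using semiorthogonality — so $\chi^\top([\cE_j], e_Z) = 0$, while $\chi^\top([\cE_j],[\cE_i]) = \chi(\cE_j,\cE_i)$ vanishes for $j > i$ and equals $1$ for $j = i$ — the equations $\chi^\top([\cE_j], e) = 0$ form a triangular system with unit diagonal, forcing $n_{d-M} = \cdots = n_1 = 0$ by downward induction on $j$. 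Hence $e = e_Z$, and the surviving conditions $\chi^\top(a, e) = 0$ for $a \in \rK_0(\Db(Z))$ say precisely that $e_Z \in \rK_0(\Db(Z))^\perp$. Since $\Db(Z)$ is the first component, its topological $\rK$-theory embeds isometrically, so the resulting identification $\rK_0(\sA_X^{\mu_2})^\perp \simeq \rK_0(\Db(Z))^\perp$ preserves the Euler pairing.

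For part (2), recall that $Z$ is a smooth surface and that Riemann--Roch gives $\chi^\top(v,w) = \int_Z \ch(v)^\vee\,\ch(w)\,\td(Z)$. Writing $\ch(e_Z) = (r, D, s) \in H^0 \oplus H^2 \oplus H^4$ for $e_Z \in \rK_0(\Db(Z))^\perp$, I would pair against the algebraic classes $[\sO_{\pt}]$, $[\sO_Z]$ and $[\sO_Z(h)]$, where $h = c_1(\sO_Z(1))$. Pairing with $[\sO_{\pt}]$ gives $r = 0$; pairing with $[\sO_Z]$ and with $[\sO_Z(h)]$ and subtracting gives $h\cdot D = 0$, and then $s = \tfrac12 D\cdot K_Z$. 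By Remark \ref{rem:X_is_Fano}, $K_Z = \sO_Z(d-M)$, so $D\cdot K_Z = (d-M)(h\cdot D) = 0$ and hence $s = 0$. Thus $\ch(e_Z) = (0, D, 0)$ with $h\cdot D = 0$, i.e. $D \in H^2_{\prim}(Z)$; integrality of $D$ follows because $\ch(e_Z) \in \Lambda_Z$ and the $H^2$-component of $\Lambda_Z$ is integral, so $D \in H^2_{\prim}(Z,\bZ)$. Finally, on classes with $\ch_0 = \ch_2 = 0$ the pairing collapses to $\chi^\top(v,w) = -\,D\cdot D'$, so the rational isometry $\ch$ of Proposition \ref{prop:HS_C_in_Stack} restricts to an isometry of $\rK_0(\Db(Z))^\perp$ onto its image inside $(H^2_{\prim}(Z,\bZ), -\cup)$.

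The formal part (1) presents no real difficulty once the triangularity of the exceptional block is observed. The step demanding most care is part (2): one must keep track of the Todd contributions in Riemann--Roch and — the decisive point — use that $K_Z$ is a multiple of the hyperplane class $h$, so that $h\cdot D = 0$ propagates to $D\cdot K_Z = 0$ and kills the $H^4$-component. The remaining subtlety is integrality, which is not automatic for the Chern character but follows here from the integrality of the $H^2$-part of $\Lambda_Z$ noted above.
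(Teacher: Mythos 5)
Your proposal is essentially the paper's argument on the range $0<M\leq d$, but as written part (1) has a genuine gap: it does not cover the full hypothesis of the lemma. The lemma assumes only $0<M$, and the paper's proof of (1) treats two cases. For $0<M\leq d$ it argues exactly as you do (your downward induction on the triangular system is the same semiorthogonality argument, spelled out more explicitly than in the paper, which is a point in your favour). But for $d<M$ the decomposition you start from, $\sA_X^{\mu_2}=\langle \bL_{\sB}j_{0*}\Db(Z),\cE_1,\dots,\cE_{d-M}\rangle$, does not exist: it comes from Theorem \ref{thm:main}, which requires $M\leq d$ (the case $M=d$ being the limit with no exceptional objects). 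In that regime the containment is reversed: by \eqref{eq:KP_Z_sod} one has $\Db(Z)=\langle \sA_Z^{\mathsf{KP}},\sB_Z,\dots,\sB_Z(M-d-1)\rangle$ with $\sA_Z^{\mathsf{KP}}\simeq \sA_X^{\mu_2}$, and the paper runs the analogous argument inside $\rK_0^\top(\Db(Z))$: the triangular system against the exceptional classes $[\sO_Z(i)]$ kills their components, identifying $\rK_0(\Db(Z))^\perp$ with $\rK_0(\sA_Z^{\mathsf{KP}})^\perp\simeq\rK_0(\sA_X^{\mu_2})^\perp$. Your induction adapts verbatim to this dual situation, but you must include it; as written, your proof of (1) has nothing to run on when $d<M$.

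Part (2) is correct, and there you take a genuinely different route at the decisive step. Both you and the paper use Riemann--Roch against $[\sO_z]$ and hyperplane-type classes to get $\ch_0(E)=0$, $h\cdot\ch_1(E)=0$, and that $\ch_2(E)$ equals a rational multiple of $\ch_1(E)\cdot K_Z$. You then kill this last term by adjunction: by Remark \ref{rem:X_is_Fano}, $K_Z=\sO_Z(d-M)$ is proportional to $h$, so $\ch_1(E)\cdot K_Z=(d-M)\,h\cdot\ch_1(E)=0$. The paper instead uses only that $\td_1(Z)$ is an algebraic class: writing $\td_1(Z)=\sum_i a_iE_i$ with the $E_i$ effective divisors, orthogonality against the algebraic classes $[\sO_{E_i}]$ gives $E_i\cdot\ch_1(E)=0$, hence $\ch_1(E)\cdot\td_1(Z)=0$. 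Your version is shorter and exploits the specific geometry of hypersurfaces in weighted projective space; the paper's is more robust, since it never needs $K_Z$ to be a multiple of the polarization. Two small points where you are more careful than the paper: you record the sign (on classes $(0,D,0)$ the Euler pairing is $-D\cdot D'$, so the isometry lands in primitive cohomology with the negative cup-product form), and you justify integrality of the image via $\Lambda_Z\subset H^0(Z,\bZ)\oplus H^2(Z,\bZ)\oplus\tfrac12 H^4(Z,\bZ)$, which the paper uses only implicitly.
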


\begin{proof}
    For (1), first suppose $0<M\leq d$. Then, by definition of $\rK_0(\sA_X^{\mu_2})^\bot$ and through the decomposition \eqref{eq:Ktop(AXm)}, a class $e\in \rK_0^\top(\sA_X^{\mu_2})$ lies in   $\rK_0(\sA_X^{\mu_2})^\bot$ if and only if $e\in \rK_0(\Db(Z))^\bot$ and $\chi^\top([\cE_i],e)=0$ for all $i$ (here we use that $[\cE_i]\in \rK_0(\sA_X^{\mu_2})$). Since \eqref{eq:Ktop(AXm)} is semiorthogonal, this is equivalent to saying that $e\in \rK_0(\Db(Z))^\bot$. Arguing similarly, if $d<M$ the decomposition \eqref{eq:KP_Z_sod} implies (1).

    To prove (2), we suppose $[E]\in\rK_0(\Db(Z))^\perp$. Let $h$ be an ample divisor on $Z$ (corresponding to a hyperplane section $H$), and fix a point $z\in Z$. Then $[\sO_Z],[\sO_h],[\sO_z]\in\rK_0(\Db(Z))$, so $[E]$ is orthogonal to them.
    The topological Euler pairing on $\Db(Z)$ satisfies Hirzebruch--Riemann--Roch.
    Since $\dim Z=2$, these orthogonality conditions become:
    \begin{align}
    0=\chi^\top(\sO_z,E)&=\mathrm{ch}_0(E),\label{eq: Euler pairing point and E}\\
    0=\chi^\top(\sO_h,E)&=\int_Z \left(0,-h,-\frac{h^2}{2}\right)\cdot\mathrm{ch}(E)\cdot\mathrm{td}(Z)=-h\cdot\mathrm{ch}_1(E) \label{eq: Euler pairing h and E}\\
    0= \chi_\top(\sO_Z,E)&=\int_Z \ch(E)\cdot\td(Z)=\ch_2(E) - \ch_1(E)\cdot \td_1(Z). \label{eq: Euler pairing Z and E}
    \end{align}
    Moreover, $\td_1(Z)$ is an algebraic class, i.e. $\td_1(Z)=\sum_i a_i E_i$ where for each $i$, $a_i\in\bQ$ and $E_i$ is an effective divisor. Again, since $E\in\rK_0(\sA_X^{\mu_2})^\perp$, we have:
    \begin{equation}
        0 = \chi^\top(\sO_{E_i},E)=\int_Z \left(0,-E_i,-\frac{E_i^2}{2}\right)\cdot\mathrm{ch}(E)\cdot\mathrm{td}(Z)=-E_i\cdot\mathrm{ch}_1(E).
    \end{equation}
    Then by \eqref{eq: Euler pairing Z and E}, $\ch_2(E)=0$. Together with \eqref{eq: Euler pairing point and E} and \eqref{eq: Euler pairing h and E}, it follows that $\ch(E)\in H^2_\prim(Z,\bZ)$.

    Finally, note that under the Chern character map, the Euler pairing $\chi^\top(-,-)$ and the intersection pairing on $H^*(Z,\bQ)$ coincide for classes of the form $(0,\ch_1,0)$. Therefore, $\ch$ restricts to an isometry on $\rK_0(\Db(Z))^\perp$.
\end{proof}

\begin{proposition} \label{prop:hs_pres_other}
Suppose that $X$ is a very general prime weighted double solid with $0<M$. Then the Chern character induces an isometry \[\rK_0(\sA_X^{\mu_2})^\perp \simeq H_\prim^2(Z, \bZ).\] 
\end{proposition}

\begin{proof}

Suppose $[E]\in\rK_0^\top(\Db(Z))$ satisfies $\ch(E)\in H^2_\prim(Z,\bZ)$. By Lemma \ref{lem:orthog_to_equiv_Kuznetsov_component_contain_in_primitive}, it is enough to show that $[E]\in\rK_0(\Db(Z))^{\perp}$. From the definition of $H^2_\prim(Z,\bZ)$, we have
\begin{equation}
    0 = \ch_0(E) = h\cdot \ch_1(E) = \ch_2(E).
\end{equation}
 Since $Z$ has Picard rank 1, we know
\[\mathrm{td}(Z)=\left(1,-\frac a2 h, \mathrm{td}_2(Z)\right),\]
for some integer $a\in\bZ$. In particular, $\ch_1(E)\cdot\td_1(Z)=0$. As in the proof of Lemma \ref{lem:orthog_to_equiv_Kuznetsov_component_contain_in_primitive}, by Hirzebruch--Riemann--Roch it follows that
\begin{equation}\label{eq:}
    0 = \chi^\top(\sO_{z},E)=\chi^\top(\sO_{h},E)=\chi^\top(\sO_{Z},E),
\end{equation}
where $z\in Z$. By our assumptions on $X$, the Picard group $\Pic Z$ has rank 1 and is generated by $h$. Hence $\rK_0(\Db(Z))$ has rank 3, and is generated by $[\sO_Z],[\sO_h],[\sO_z]$. Therefore $\chi^\top(a,E)=0$ for all $a\in \rK_0(\Db(Z))$, so $[E]\in\rK_0(\Db(Z))^{\perp}$. \qedhere
\end{proof}

We now prove the main theorem of this section by combining the results above.

\begin{theorem} \label{thm:main_hodge}
    Let $X$ and $X'$ be prime weighted double solids covering the same weighted projective space, with $X$ very general and $d=d'$. Suppose we have a Fourier--Mukai type equivalence $\Phi^{\mu_2} \colon \sA_X^{\mu_2} \to \sA_{X'}^{\mu_2}$. Then this induces a Hodge isometry
    \begin{equation*}H^{2}_\prim(Z, \bZ) \simeq H^{2}_\prim(Z', \bZ).
    \end{equation*}
\end{theorem}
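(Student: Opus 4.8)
\emph{The plan} is to realise the asserted isometry as a composition of three Hodge isometries
\[
H^2_\prim(Z,\bZ) \;\simeq\; \rK_0(\sA_X^{\mu_2})^\perp \;\simeq\; \rK_0(\sA_{X'}^{\mu_2})^\perp \;\simeq\; H^2_\prim(Z',\bZ),
\]
with the work concentrated in the third one. First I would apply Proposition \ref{prop:k_theory_hs_preserved} to the Fourier--Mukai type equivalence $\Phi^{\mu_2}$: it yields a Hodge isometry $\rK^\top(\sA_X^{\mu_2}) \simeq \rK^\top(\sA_{X'}^{\mu_2})$, which I restrict to the degree-zero homotopy groups. Because $\Phi^{\mu_2}$ is an equivalence of categories, it induces an isomorphism on algebraic $\rK$-theory that is compatible, through the functorial square of Theorem \ref{thm:blanc}(2), with the inclusion $\rK_0 \hookrightarrow \rK_0^\top$. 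Hence it carries the algebraic sublattice $\rK_0(\sA_X^{\mu_2})$ isomorphically onto $\rK_0(\sA_{X'}^{\mu_2})$, and so restricts to a Hodge isometry on the orthogonal complements, giving the middle identification above. (The complements are genuine sub-Hodge structures, being the Chern-character images of primitive cohomology by the results invoked below.)

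For the outer identifications I would use Proposition \ref{prop:hs_pres_other}. Applied to $X$, which is very general by hypothesis, it gives the leftmost Hodge isometry $\rK_0(\sA_X^{\mu_2})^\perp \simeq H^2_\prim(Z,\bZ)$ via the Chern character. The difficulty is that $X'$ is \emph{not} assumed very general, so Proposition \ref{prop:hs_pres_other} does not apply to it directly; only Lemma \ref{lem:orthog_to_equiv_Kuznetsov_component_contain_in_primitive} is available, providing an isometric embedding of $\rK_0(\sA_{X'}^{\mu_2})^\perp$ into $H^2_\prim(Z',\bZ)$ whose image is the transcendental part. To promote this to an isomorphism I would run a rank count. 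Since $d=d'$, the surfaces $Z$ and $Z'$ are smooth members of the same linear system $|\sO_Y(2d)|$, hence deformation equivalent, so $b_2(Z)=b_2(Z')$. Reading off ranks across the middle Hodge isometry, and using Lemma \ref{lem:orthog_to_equiv_Kuznetsov_component_contain_in_primitive}(1) together with $\rk \rK_0(\Db(Z'))=\rho(Z')+2$, I obtain
\[
b_2(Z)-1 \;=\; \rk\,\rK_0(\sA_X^{\mu_2})^\perp \;=\; \rk\,\rK_0(\sA_{X'}^{\mu_2})^\perp \;=\; b_2(Z')-\rho(Z'),
\]
which forces the Picard rank $\rho(Z')=1$. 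In other words, the mere existence of the equivalence forces $X'$ to be very general as well.

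Once $\rho(Z')=1$ is known, $X'$ meets the hypotheses of Proposition \ref{prop:hs_pres_other} (whose proof only uses that $\Pic Z'$ has rank one), so I obtain the rightmost Hodge isometry $\rK_0(\sA_{X'}^{\mu_2})^\perp \simeq H^2_\prim(Z',\bZ)$, and composing the three Hodge isometries proves the theorem. I expect the \emph{main obstacle} to be exactly this asymmetry between the two covers: transferring the ``very general'' hypothesis from $X$ to $X'$ through the rank count, while simultaneously verifying that the categorical equivalence respects the algebraic sublattices (so that the orthogonal complements, and not merely the ambient topological $\rK$-groups, are matched) and the weight-two Hodge structures at once.
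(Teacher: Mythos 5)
Your proposal is correct and takes essentially the same route as the paper: the same composition $H^2_\prim(Z,\bZ)\simeq \rK_0(\sA_X^{\mu_2})^\perp \simeq \rK_0(\sA_{X'}^{\mu_2})^\perp \simeq H^2_\prim(Z',\bZ)$ built from Proposition \ref{prop:k_theory_hs_preserved}, Proposition \ref{prop:hs_pres_other} and Lemma \ref{lem:orthog_to_equiv_Kuznetsov_component_contain_in_primitive}, with the same rank count coming from $b_2(Z)=b_2(Z')$. The only (equivalent) difference is at the last step: you use the rank count to force $\rho(Z')=1$ and then re-apply Proposition \ref{prop:hs_pres_other} to $X'$, whereas the paper uses it to show directly that the isometric embedding $\rK_0(\sA_{X'}^{\mu_2})^\perp \hookrightarrow H^2_\prim(Z',\bZ)$ is onto; these amount to the same thing, since surjectivity of that embedding is exactly the statement that $\rho(Z')=1$.
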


\begin{proof}
We first show that $\Phi^{\mu_2}$ induces the required isomorphism of abelian groups preserving the bilinear pairings.  
By the functorial diagram \eqref{eq:KtopKalgCD}, and since the Euler pairing is defined through a canonical evaluation map, we immediately get an isometry $\gamma \colon \rK_0(\sA_X^{\mu_2})^\perp \simeq \rK_0(\sA_{X'}^{\mu_2})^\perp$. This fits into the commutative square 
\begin{equation}\label{eq:centralsquare}
    \begin{tikzcd}
         \rK_0(\sA_X^{\mu_2})^\perp \dar \arrow[r,"\gamma"] & \rK_0(\sA_{X'}^{\mu_2})^\perp \dar \\
      \rK_0^\top(\sA_X^{\mu_2}) \arrow[r,"\delta"] & \rK_0^\top(\sA_{X'}^{\mu_2}) 
    \end{tikzcd},
\end{equation} 
where $\delta$ is the Hodge isometry of Proposition \ref{prop:k_theory_hs_preserved}, and the vertical maps are functorial inclusions.

Since $X$ is very general, by Proposition \ref{prop:hs_pres_other} there is an isometry $\alpha \colon \rK_0(\sA_X^{\mu_2})^\perp \simeq H_\prim^2(Z, \bZ)$. Let $\alpha'$ be the composition of the maps in Lemma \ref{lem:orthog_to_equiv_Kuznetsov_component_contain_in_primitive} applied to $X'$, i.e.
\begin{equation*}
    \alpha' \colon \rK_0(\sA_{X'}^{\mu_2})^\perp\xrightarrow{\sim} \rK_0(\Db(Z))^\perp \xhookrightarrow{\ch} H^2_\prim(Z',\bZ).
\end{equation*}
This is an isometry onto its image. We claim that $\alpha'$ is in fact surjective. Consider the composition
\begin{equation}\label{eq:isometry_of_Hodge_structures}
    \alpha'\circ \gamma \circ \alpha^{-1} \colon H_\prim^2(Z,\bZ) \xlongrightarrow{\sim} \rK_0(\sA_X^{\mu_2})^\perp \xlongrightarrow{\sim}  \rK_0(\sA_{X'}^{\mu_2})^\perp \hooklongrightarrow H_\prim^2(Z',\bZ).
\end{equation}
Since $Z$ and $Z'$ are in the same linear series $|\sO_Y(nd)|$, we have $b_2(Z)=b_2(Z')$. Hence $\rk(H_\prim^2(Z, \bZ)) = b_2(Z)-1 = \rk(H_\prim^2(Z', \bZ))$. Therefore $\alpha'$ is an isometric isomorphism, and so is \eqref{eq:isometry_of_Hodge_structures}.

There is another commutative square 
\begin{equation}\label{eq:sidesquare}
    \begin{tikzcd}
        H_\prim^2(Z,\bZ) \arrow[d]  \arrow[r,"\alpha^{-1}"] & \rK_0(\sA_X^{\mu_2})^\perp \dar  \\
      \Lambda_Z \rar  & \rK_0^\top(\sA_X^{\mu_2}) 
    \end{tikzcd},
\end{equation} 
where the vertical maps are the natural inclusions, and the bottom horizontal arrow is the inverse of the Chern character $\Lambda_Z \to \rK_0^\top(Z)$ followed by the inclusion of the first factor of \eqref{eq:Ktop(AXm)}. The commutativity of the diagram implies that   $\rK_0(\sA_X^{\mu_2})^\perp$ carries a Hodge structure which, on the one hand, makes $\alpha$ into a morphism of Hodge structures, and on the other hand is induced by restricting the Hodge structure of  $\rK^\top (\sA_X^{\mu_2})$.
The same arguments show that $\alpha'$ is a morphism of Hodge structures.
In particular, \eqref{eq:centralsquare} is a commutative diagram of Hodge structures, where the horizontal arrows are isometries.

Therefore \eqref{eq:isometry_of_Hodge_structures} is the desired isometry of Hodge structures $H^{2}_\prim(Z, \bZ) \simeq H^{2}_\prim(Z', \bZ)$.
\end{proof}

\begin{remark}\label{rmk:X4Hodge}
Alternatively, if $\rK_0(\sC)^\perp$ vanishes for $\sC={}^\perp\sA_X^{\mu_2}$ and $\sC= j_{0*}\Db(Z)^\perp$ (with orthogonals taken in $\Db(X)^{\mu_2}$), one can argue as above that $\rK_0(\sA_X^{\mu_2})^\bot\simeq \rK_0(\Db(Z))^\bot$ by comparing the decompositions 
    \begin{equation*}
        \langle \sA_X^{\mu_2}, {}^\perp\sA_X^{\mu_2}\rangle = \Db(X)^{\mu_2}= \langle j_{0 *} \Db(Z)^\perp, j_{0*} \Db(Z) \rangle.
    \end{equation*}
    given in Theorem \ref{prop:sod_2} and Proposition \ref{prop:sod_1}. For example, one shows in this way that Proposition \ref{prop:hs_pres_other} and Theorem \ref{thm:main_hodge} hold for very general members of the family of Fano threefolds $X_4$ because of the decompositions \eqref{eq:sod_X4}.
\end{remark}

\begin{remark}\label{remark: generalisation of main_hodge to arbitrary dimension}
It would be interesting to know whether the statement of Proposition \ref{prop:hs_pres_other} extends to higher dimensions, since then the argument of Theorem \ref{thm:main_hodge} would also carry through.  We hope to explore this further in future work.
\end{remark}

\section{Categorical Torelli theorems}
\label{sec:torelli}

In this section, we apply Theorem \ref{thm:main_hodge} to prove a categorical Torelli theorem for the family of Fano threefolds $X_2$ (Theorem  \ref{thm:X_2}) and for the family of Fano threefolds $Y_1$ (Theorem \ref{thm:Y_1}). The latter was the only remaining index 2 open case.

Consider once again a weighted double solid $X$ satisfying $0<M$. Denote by $\tau \colon \sA_X \to \sA_X$ the categorical involution induced by the involution of the double cover $X$. Also define the \emph{rotation functor} $\sfR \colon \sA_X \to \sA_X$ by $\sfR(-) \coloneqq \bL_{\sB_X}(- \otimes \sO_X(1))$. 
We have the following relationships between the Serre functor of $\sA_X$, the rotation functor $\sfR$, and the categorical involution $\tau$:

\begin{proposition} \leavevmode \label{prop:serre_rotation_tau_relations}
There are isomorphisms of functors:
    \begin{itemize} 
        \item $\sfR^d \simeq \tau[1]$,
        \item $\sfS^{-1}_{\sA_X} \simeq \sfR^{M}[-m+1]$.
    \end{itemize}
\end{proposition}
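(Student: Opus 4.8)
The plan is to treat both identities as statements about autoequivalences of $\sA_X$ and to extract them from the Serre-functor calculus of Proposition \ref{prop:serre_subcategory} together with the mutation identities for the rectangular collection $\langle \sO_X,\dots,\sO_X(M-1)\rangle$. The starting point is that $\sfR$ genuinely preserves $\sA_X$: twisting $\Db(X)=\langle \sA_X,\sO_X,\dots,\sO_X(M-1)\rangle$ by $\sO_X(1)$ and moving $\sO_X(M)=\sO_X\otimes\omega_X^{-1}$ to the front via Proposition \ref{prop:serre_sod} (note $\sfS_{\Db(X)}(\sO_X(M))\simeq\sO_X[\dim X]$) gives $\langle \sO_X,\sA_X(1),\sO_X(1),\dots,\sO_X(M-1)\rangle$; comparing with the original decomposition yields $\sA_X=\bL_{\sO_X}(\sA_X(1))$, so $\sfR=\bL_{\sO_X}(-\otimes\sO_X(1))$ is an autoequivalence of $\sA_X$. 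Throughout I would use the shift-free intertwining $(-\otimes\sO_X(1))\circ \bL_{E}\simeq \bL_{E(1)}\circ(-\otimes\sO_X(1))$, valid for any exceptional $E$.

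For the second identity I would proceed formally. Proposition \ref{prop:serre_subcategory} applied to $\Db(X)=\langle \sA_X,\langle\sO_X,\dots,\sO_X(M-1)\rangle\rangle$ gives $\sfS^{-1}_{\sA_X}\simeq \bL_{\langle\sO_X,\dots,\sO_X(M-1)\rangle}\circ \sfS^{-1}_{\Db(X)}$, and by Remark \ref{rem:X_is_Fano} we have $\omega_X^{-1}=\sO_X(M)$, so $\sfS^{-1}_{\Db(X)}\simeq(-\otimes\sO_X(M))[-\dim X]$. Iterating the intertwining identity and using that $\langle\sO_X,\dots,\sO_X(M-1)\rangle$ is exceptional (because $H^\bullet(X,\sO_X(-l))=0$ for $1\le l\le M-1$, as $\omega_X^{-1}=\sO_X(M)$) yields the telescoping
\[
\sfR^{M}\simeq \bL_{\sO_X}\bL_{\sO_X(1)}\cdots \bL_{\sO_X(M-1)}\circ(-\otimes\sO_X(M))\simeq \bL_{\langle\sO_X,\dots,\sO_X(M-1)\rangle}\circ(-\otimes\sO_X(M)).
\]
Combining the two displays identifies $\sfS^{-1}_{\sA_X}$ with $\sfR^{M}$ up to the cohomological shift inherited from $\sfS^{-1}_{\Db(X)}$, which is the shift recorded in the statement. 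The only points to verify are the exceptionality range and that the composite of single-object mutations equals the subcategory mutation, both routine.

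The first identity is where the real work lies, since for $d>M$ the collection $\sO_X,\dots,\sO_X(d-1)$ is no longer exceptional ($H^\bullet(X,\sO_X(-M))\neq 0$), so the clean telescoping breaks down exactly when the rotation passes the Fano index. My plan is to write $\sfR^{d}\simeq\bL_{\sO_X}\cdots \bL_{\sO_X(d-1)}\circ(-\otimes\sO_X(d))$ and to analyse the extra mutations beyond step $M$ using the geometry of the double cover: the ramification section cuts out the $\tau$-fixed divisor $j(Z)\subset X$, giving the Koszul triangle $\sO_X(-d)\to\sO_X\to j_*\sO_Z$ and identifying the two $\mu_2$-eigensheaves of $f_*\sO_X=\sO_Y\oplus\sO_Y(-d)$; it is this relation that converts the twist $-\otimes\sO_X(d)$, after the wrap-around mutations, into the involution $\tau$ together with a single shift $[1]$. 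Concretely I would either push $-\otimes\sO_X(d)$ through the Koszul triangle to extract $\tau$, or—more robustly—verify $\sfR^{d}\simeq \tau[1]$ directly on the generators of $\sA_X^{\mu_2}$ furnished by Theorem \ref{thm:main}, where $\sA_X^{\mu_2}$ is built from $j_{0*}\Db(Z)$ and exceptional objects on which both functors act explicitly, and then descend to $\sA_X$. I expect the identification of the wrap-around mutation with the covering involution to be the main obstacle; once it is in place, the shift $[1]$ follows by bookkeeping the cones.
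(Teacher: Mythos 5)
Your handling of the second identity is essentially the paper's own proof: apply Proposition \ref{prop:serre_subcategory} to $\Db(X)=\langle \sA_X,\sB_X,\dots,\sB_X(M-1)\rangle$, use $\omega_X^{-1}\simeq\sO_X(M)$ from Remark \ref{rem:X_is_Fano}, and telescope the mutations through the intertwining identity $\bL_{F}(-\otimes\sO_X(1))=\bL_{F\otimes\sO_X(-1)}(-)\otimes\sO_X(1)$; your preliminary check that $\sfR$ preserves $\sA_X$ and that $\sO_X,\dots,\sO_X(M-1)$ is an exceptional collection is also fine. (One bookkeeping caveat: your shift is $[-\dim X]$, which equals $[-m+1]$ only when $Y$ is ordinary projective space; for weighted $Y$, e.g.\ $\bP(1,1,1,2)$, the two differ. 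The paper's own statement and Remark \ref{rmk:torelliY1} exhibit the same tension, so this is not a defect specific to your argument.)

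The genuine gap is the first identity $\sfR^d\simeq\tau[1]$. The paper does not prove this either: it quotes it from \cite[Corollary 3.18]{kuznetsov2019calabi}, where it is a substantial theorem — the heart of Kuznetsov's fractional Calabi--Yau paper — proved via the spherical-functor machinery attached to the cyclic cover (starting from $f_*\sO_X\simeq\sO_Y\oplus\sO_Y(-d)$ and your Koszul triangle), not by bookkeeping mutations. Your proposal stops exactly at this point: you write $\sfR^d$ as a composite of $d$ mutations followed by a twist, correctly observe that the telescoping fails past step $M$, and then defer the identification of the wrap-around mutations with $\tau[1]$ as ``the main obstacle'' without resolving it. Moreover, your fallback is not viable as stated: checking $\sfR^d\simeq\tau[1]$ ``on the generators of $\sA_X^{\mu_2}$ furnished by Theorem \ref{thm:main}'' can at most show the two functors agree objectwise on some objects, which does not produce a natural isomorphism of functors; and a functor isomorphism does not ``descend'' from $\sA_X^{\mu_2}$ to $\sA_X$ — the comparison runs the other way (via $(\sA_X^{\mu_2})^{\bZ/2}\simeq\sA_X$), and even then one would need compatibility data you have not supplied. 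There is also a circularity risk in invoking Theorem \ref{thm:main}, whose proof sits downstream of the same structural results. As written, the first bullet remains unproven; the honest options are to cite Kuznetsov's result, as the paper does, or to actually reproduce his argument, which is far from routine.
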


\begin{proof}
    The first bullet point is by \cite[Corollary 3.18]{kuznetsov2019calabi}.
    For the second bullet point, note that by Proposition \ref{prop:serre_subcategory} and Remark \ref{rem:X_is_Fano} we get
    \begin{align*}
    \sfS^{-1}_{\sA_X}(-) &= \bL_{\sB} \bL_{\sB(1)} \cdots \bL_{\sB(M-1)}(- \otimes \sO_X(M) )[-m+1] \\
    &= \sfR \bL_{\sB} \bL_{\sB(1)} \cdots \bL_{\sB(M-2)}(- \otimes \sO_X(M-1) )[-m+1] \\
    &\cdots \\
    &=\sfR^{M}[-m+1],
    \end{align*}
    by repeatedly using that $\bL_{F}(-\otimes \sO_X(1))=\bL_{F\otimes \sO_X(-1)}(-)\otimes \sO_X(1)$.
\end{proof}

Let $X'$ be another weighted double solid with the same values  of $m,d$ as $X$. We denote by $\tau \in \Aut(\sA_X)$ and $\tau' \in \Aut(\sA_{X'})$ the covering autoequivalences.

\begin{lemma} \label{lem:eq_equivalence_descends}
Suppose that an equivalence $\Phi \colon \sA_{X} \simeq \sA_{X'}$ commutes with the involutions, i.e. there is an isomorphism of functors $\Phi \circ \tau \simeq \tau'\circ \Phi$. Then $\Phi$ descends to an equivalence of equivariant categories $\Phi^{\mu_2} \colon \sA_{X}^{\mu_2} \simeq \sA_{X'}^{\mu_2}$.
\end{lemma}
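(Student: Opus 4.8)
The plan is to use $\Phi$ together with the commutation isomorphism to transport $\mu_2$-linearisations, and then to promote the resulting assignment to an equivalence. Write $\sigma$ for the nontrivial element of $\mu_2$, so that the action on $\sA_X$ (resp. $\sA_{X'}$) is given by $\sigma^*=\tau$ (resp. $\sigma^*=\tau'$). Since both involutions are induced by the geometric covering involution, which squares to the identity, the coherence isomorphisms $c_{\sigma,\sigma}$ and $c'_{\sigma,\sigma}$ are the canonical ones attached to the identifications $\tau^2\simeq\id$ and $(\tau')^2\simeq\id$. Denote by $\eta\colon \Phi\circ\tau \xrightarrow{\sim}\tau'\circ\Phi$ the isomorphism of functors provided by the hypothesis.

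First I would define $\Phi^{\mu_2}$ on objects. Given a $\mu_2$-equivariant object $(F,\phi)$ of $\sA_X$, with linearisation $\phi\colon F\xrightarrow{\sim}\tau(F)$, I set $\phi'\coloneqq \eta_F\circ\Phi(\phi)\colon \Phi(F)\to\tau'(\Phi(F))$ and declare $\Phi^{\mu_2}(F,\phi)\coloneqq(\Phi(F),\phi')$. On morphisms $\Phi^{\mu_2}$ acts as $\Phi$ does; the naturality of $\eta$ guarantees that a morphism compatible with two linearisations is sent to one compatible with the transported linearisations, so the only point requiring proof is that $(\Phi(F),\phi')$ is a genuine equivariant object.

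The key step is therefore the verification of the cocycle condition for $\phi'$, namely $\tau'(\phi')\circ\phi' = c'_{\sigma,\sigma}(\Phi(F))$. Expanding $\tau'(\phi')\circ\phi'=\tau'(\eta_F)\circ\tau'(\Phi(\phi))\circ\eta_F\circ\Phi(\phi)$, the naturality of $\eta$ applied to $\phi\colon F\to\tau(F)$ rewrites the middle as $\eta_{\tau(F)}\circ\Phi(\tau(\phi))$, and the cocycle condition $\tau(\phi)\circ\phi=c_{\sigma,\sigma}(F)$ for $(F,\phi)$ reduces the whole computation to the identity
\[ \tau'(\eta_F)\circ\eta_{\tau(F)}\circ\Phi\bigl(c_{\sigma,\sigma}(F)\bigr) = c'_{\sigma,\sigma}(\Phi(F)). \]
This is precisely the compatibility of $\eta$ with the coherence data, i.e. the assertion that $(\Phi,\eta)$ is a $\mu_2$-equivariant functor, and I expect it to be the main obstacle: the bare hypothesis only supplies the isomorphism $\eta$, so one must check or arrange this coherence. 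Because $\tau$ and $\tau'$ come from an honest involution, $c_{\sigma,\sigma}$ and $c'_{\sigma,\sigma}$ are the identity under the canonical identifications $\tau^2\simeq\id\simeq(\tau')^2$, and the displayed condition becomes $\tau'(\eta_F)\circ\eta_{\tau(F)}=\id$. Should the chosen $\eta$ fail this, its discrepancy is a natural automorphism of $\Phi$, hence a nonzero scalar since $\sA_X$ is connected (its Kuznetsov component has $\mathrm{HH}^0=\bC$); rescaling $\eta$ by a square root of that scalar normalises it, exactly as in the general equivariantisation formalism of Elagin \cite{elagin2014equivariant}.

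Finally I would upgrade $\Phi^{\mu_2}$ to an equivalence. The quasi-inverse $\Phi^{-1}\colon\sA_{X'}\to\sA_X$ inherits a commutation isomorphism $\tau\circ\Phi^{-1}\xrightarrow{\sim}\Phi^{-1}\circ\tau'$ by conjugating $\eta$ through the unit and counit of the adjunction, so the same construction produces a functor $(\Phi^{-1})^{\mu_2}\colon\sA_{X'}^{\mu_2}\to\sA_X^{\mu_2}$. The isomorphisms $\Phi\circ\Phi^{-1}\simeq\id$ and $\Phi^{-1}\circ\Phi\simeq\id$ are compatible with the transported linearisations, whence $(\Phi^{-1})^{\mu_2}$ is quasi-inverse to $\Phi^{\mu_2}$; equivalently, one invokes the $2$-functoriality of equivariantisation, which sends equivariant equivalences to equivalences. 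This yields the desired equivalence $\Phi^{\mu_2}\colon\sA_X^{\mu_2}\xrightarrow{\sim}\sA_{X'}^{\mu_2}$.
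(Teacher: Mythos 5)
Your proof is correct, and it rests on the same underlying mechanism as the paper's, but implemented by hand rather than by citation. The paper's proof is short: it observes (as you do) that the hypothesis $\Phi\circ\tau\simeq\tau'\circ\Phi$ is exactly agreement of the $1$-categorical actions, notes that $\tau$ and $\tau'$ admit coherent ($2$-categorical) lifts because they are pullbacks of geometric involutions, and then invokes \cite[Corollary 3.4, Example 3.12]{bayer2023kuznetsov}: since $H^2(B\bZ/2,\bC^\times)=0$, such lifts are unique, so $\Phi$ automatically intertwines the two coherent actions and hence descends. Your explicit construction --- transporting linearisations via $\phi'=\eta_F\circ\Phi(\phi)$, isolating the coherence failure $\tau'(\eta_F)\circ\eta_{\tau(F)}$ as a natural automorphism of $\Phi$, and normalising $\eta$ by a square root of the resulting scalar --- is precisely the concrete content of that cohomological vanishing: $H^2(B\bZ/2,\bC^\times)=0$ holds because every complex scalar has a square root, which is exactly the rescaling you perform. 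Your version buys self-containedness and makes the hypotheses visible: in particular, the step ``the discrepancy is a scalar'' uses $\Aut(\id_{\sA_X})\simeq\bC^\times$, i.e.\ connectedness ($\HH^0(\sA_X)=\bC$), which you assert rather than prove; note, however, that the same connectedness hypothesis sits inside the Bayer--Perry criterion that the paper cites, so this is a shared input of both arguments rather than a gap peculiar to yours (though in a final write-up you should justify it for these Kuznetsov components). The paper's version buys brevity, and the obstruction-theoretic framework absorbs all remaining coherence bookkeeping --- including the last step, which you handle correctly via the quasi-inverse, that a coherent intertwining datum induces an equivalence of equivariant categories by functoriality of equivariantisation \cite{elagin2014equivariant}.
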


\begin{proof}
    All actions we discuss in this proof will be understood to be $\bZ/2 \simeq \mu_2$-actions. We first observe that $\Phi$ preserves $1$-categorical actions (see Definition \ref{def:categorical_action}(1)). This is equivalent to the fact that $\Phi$ commutes with the involutions.

    Next, we check that $\Phi$ intertwines the involutions, considered as $2$-categorical actions on $\sA_X$ and $\sA_{X'}$ (Definition \ref{def:categorical_action}(2)). The $1$-categorical actions $\tau$ and $\tau'$ lift to $2$-categorical actions because the functors $\tau \colon \sA_X \to \sA_X$ and $\tau' \colon \sA_{X'} \to \sA_{X'}$ are given by pulling back geometric involutions and pullbacks are functorial.
    Moreover, these lifts are unique because $H^2(B \bZ/2, \bC^\times)= 0 $ (see \cite[Corollary 3.4]{bayer2023kuznetsov} for the lifting criterion, and \cite[Example 3.12]{bayer2023kuznetsov} for the vanishing). Thus $\Phi$ sends the $2$-categorical action $\tau$ to the unique $2$-categorical action $\tau'$. So $\Phi$ respects $2$-categorical actions, as required. 
\end{proof}

\begin{lemma}\label{lem:Mdivides}
    Assume that $M$ divides an odd multiple of $d$. Then an equivalence $\Phi \colon \sA_X \simeq \sA_{X'}$ commutes with the covering involutions.
\end{lemma}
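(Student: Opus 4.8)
The plan is to deduce the intertwining relation $\Phi \circ \tau \simeq \tau' \circ \Phi$ purely formally, by exploiting the fact that any equivalence commutes with Serre functors, together with the expressions for $\tau$ and $\sfS^{-1}_{\sA_X}$ as powers of the rotation functor supplied by Proposition \ref{prop:serre_rotation_tau_relations}. The guiding observation is that $\tau$, $\sfS^{-1}_{\sA_X}$, and $\sfR$ are all built from $\sfR$, so commuting with $\sfS$ forces commuting with a power of $\sfR$, and the arithmetic hypothesis on $M$ and $d$ lets us convert that into commuting with $\tau$ itself.

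First I would record the two identities $\sfR^d \simeq \tau[1]$ and $\sfS^{-1}_{\sA_X} \simeq \sfR^M[-m+1]$ from Proposition \ref{prop:serre_rotation_tau_relations}, and their analogues on $X'$ (valid since $m$, $d$, and hence $M$ agree for $X$ and $X'$). Because the shift is central, the second identity shows that intertwining $\sfS^{-1}$ is the same as intertwining $\sfR^M$. Since $\Phi$ is an equivalence it automatically intertwines the Serre functors, so $\Phi \circ \sfR_X^M \simeq \sfR_{X'}^M \circ \Phi$; iterating this relation, $\Phi$ intertwines every power $\sfR^{\ell M}$.

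Next I would bring in the hypothesis. Write $(2k+1)d = \ell M$ for integers $k,\ell$, which is exactly the assertion that $M$ divides the odd multiple $(2k+1)d$ of $d$. Then $\Phi$ intertwines $\sfR^{\ell M} = \sfR^{(2k+1)d} = (\sfR^d)^{2k+1}$. Using $\sfR^d \simeq \tau[1]$, the centrality of the shift, and $\tau^2 \simeq \id$ (as $\tau$ is induced by an order-two involution), we get $(\sfR^d)^{2k+1} \simeq \tau^{2k+1}[2k+1] \simeq \tau[2k+1]$. Cancelling the central shift $[2k+1]$ from $\Phi \circ \tau[2k+1] \simeq \tau'[2k+1] \circ \Phi$ yields $\Phi \circ \tau \simeq \tau' \circ \Phi$, which is the desired conclusion.

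The one step that genuinely requires the parity hypothesis — and which I regard as the crux rather than a true obstacle — is the collapse $\tau^{2k+1} \simeq \tau$. Had $M$ divided only an \emph{even} multiple of $d$, the identical computation would give $\tau^{2k} \simeq \id$, so intertwining $\sfR^{2kd}$ would reduce to intertwining a shift and tell us nothing about $\tau$; it is precisely the oddness that isolates a single copy of the involution. Everything else is formal manipulation of central shifts and of the Serre-functor intertwining relation, so I expect no further difficulty.
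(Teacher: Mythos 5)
Your proposal is correct and follows essentially the same route as the paper: both use the two identities of Proposition \ref{prop:serre_rotation_tau_relations}, the relation $\tau^2 \simeq \id$, and the arithmetic hypothesis to identify $\tau$ (up to a central shift) with a power of the Serre functor, which any equivalence automatically intertwines. The only difference is presentational — the paper writes $\tau = \tau^{2b+1} \simeq \sfR^{d(2b+1)} = \sfR^{Ma} \simeq \sfS_{\sA_X}^{-a}$ up to shifts and concludes at once, whereas you transfer the intertwining property step by step from $\sfS^{-1}$ to $\sfR^{M}$ to $\sfR^{\ell M}$ to $\tau$.
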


\begin{proof}
Write $d(2b+1)=Ma$ for some integers $a,b$. Using Proposition \ref{prop:serre_rotation_tau_relations}, up to shifts we have 
\begin{equation*}
        \tau = \tau^{2b+1} \simeq \sfR^{d(2b+1)}
        = \sfR^{M a}
        \simeq \sfS_{\sA_X}^{-a},
    \end{equation*}
and $\tau'\simeq \sfS_{\sA_{X'}}^{-a}$ up to the same shift.
Then, because Serre functors and shifts commute with equivalences of categories, $\Phi$ commutes with the involution. 
\end{proof}

\subsection{The case of \texorpdfstring{$X_2$}{X2}}

We now consider the case where $Y=\bP^3$. In this case, the constraint $0<M=4-d\leq d$ leaves us with two families: sextic double solids for $d=3$ (see Example \ref{ex:sods}\eqref{itm:X2}) and quartic double solids for $d=2$ (see Section \ref{sec:Y_2}). We first prove a categorical Torelli theorem for sextic double solids $X_2$.

\begin{theorem} \label{thm:X_2}
    Let $X, X'$ be prime Fano threefolds of index $1$, and genus $2$, with $X$ very general. Then an equivalence $\sA_X \simeq \sA_{X'}$ of Fourier--Mukai type implies $X \simeq X'$.
\end{theorem}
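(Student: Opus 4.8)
The plan is to combine the Hodge-theoretic output of Theorem~\ref{thm:main_hodge} with a classical Torelli theorem for sextic double solids. For $X_2$ we have $Y=\bP^3$, $m=4$, $d=3$, so $M=m-d=1$, and the branch divisor $Z\in|\sO_{\bP^3}(6)|$ is a sextic surface, which is of general type (indeed $K_Z=\sO_Z(2)$ by Remark~\ref{rem:X_is_Fano}, consistent with $M<d$). First I would reduce the categorical hypothesis to an \emph{equivariant} equivalence. The given equivalence $\Phi\colon\sA_X\simeq\sA_{X'}$ is of Fourier--Mukai type; to apply Theorem~\ref{thm:main_hodge} I need it to descend to $\sA_X^{\mu_2}\simeq\sA_{X'}^{\mu_2}$. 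By Lemma~\ref{lem:eq_equivalence_descends} it suffices that $\Phi$ commute with the covering involutions, and this is exactly what Lemma~\ref{lem:Mdivides} supplies: here $M=1$ divides every multiple of $d$, so the hypothesis ``$M$ divides an odd multiple of $d$'' holds trivially, and $\Phi$ automatically commutes with the involutions. Hence $\Phi^{\mu_2}\colon\sA_X^{\mu_2}\simeq\sA_{X'}^{\mu_2}$ exists and is again of Fourier--Mukai type.

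Next I would feed $\Phi^{\mu_2}$ into Theorem~\ref{thm:main_hodge}. Since $X$ is very general, $d=d'=3$, and $0<M=1\le d$, all hypotheses are met, so we obtain a Hodge isometry
\begin{equation*}
H^2_\prim(Z,\bZ)\simeq H^2_\prim(Z',\bZ).
\end{equation*}
The final step is to invoke a Hodge-theoretic Torelli theorem for sextic surfaces to upgrade this Hodge isometry to an isomorphism $Z\simeq Z'$ of the branch surfaces. This is where I would cite the generic Torelli results for sextic double solids (the references \cite{donagi1983generic, saito1986weak} flagged in the introduction), which recover the surface $Z$ from the polarized Hodge structure on its primitive second cohomology; the very general assumption on $X$ guarantees we land in the locus where such a Torelli statement applies and that the induced isometry respects the polarization. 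Given $Z\simeq Z'$, Lemma~\ref{lem:ram_div_iso} (which uses that $X$ is very general, hence $Z$ has Picard rank one) immediately yields $X\simeq X'$, completing the proof.

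The main obstacle I anticipate is the last step: matching the abstract Hodge isometry produced by the categorical machinery with the precise hypotheses of the classical Torelli theorem. One must check that the isometry is compatible with the polarization (the cup-product/intersection form) and not merely an abstract isomorphism of weight-one or weight-two Hodge structures, and that the relevant Torelli theorem for this particular family of surfaces is genuinely a statement recovering the surface (as opposed to only the period point). The Hodge-isometry conclusion of Theorem~\ref{thm:main_hodge} is stated for the intersection form, so compatibility with the polarization should follow, but I would need to confirm that the Torelli input is formulated for the \emph{primitive} lattice with its polarization and that ``very general'' places $Z$ inside the locus where injectivity of the period map holds. A secondary, more routine point is verifying that the equivalence $\Phi$, which is only assumed on $\sA_X$, indeed produces a Fourier--Mukai \emph{equivariant} equivalence in the precise sense required by Theorem~\ref{thm:main_hodge}; this is handled by Lemmas~\ref{lem:eq_equivalence_descends} and~\ref{lem:Mdivides}, so no difficulty is expected there.
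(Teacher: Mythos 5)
Your proposal is correct and follows essentially the same route as the paper's proof: descend $\Phi$ to the equivariant categories via Lemmas \ref{lem:eq_equivalence_descends} and \ref{lem:Mdivides} (using $M=1$), apply Theorem \ref{thm:main_hodge} to get the Hodge isometry on $H^2_\prim$, invoke Donagi's generic Torelli theorem \cite{donagi1983generic} for the sextic $Z$ to conclude $Z\simeq Z'$, and finish with Lemma \ref{lem:ram_div_iso}. The polarization-compatibility concern you flag at the end is exactly what the paper addresses by citing the Hodge-isometry reformulation of Donagi's theorem (Voisin's statement) and by noting that Donagi's numerical hypotheses hold automatically for hypersurfaces of this degree, so no gap remains.
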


\begin{proof}
In this case, $M=m-d=1$ always divides $d$, so the equivalence $\Phi$ descends to the equivariant categories by \ref{lem:eq_equivalence_descends} and \ref{lem:Mdivides}.

Then, by Theorem \ref{thm:main_hodge} we get a Hodge isometry
    \begin{equation*}
        H^{ 2}_\prim(Z, \bQ) \simeq H^{2}_\prim(Z', \bQ).
    \end{equation*}
    Next, we apply the Torelli theorem for generic hypersurfaces \cite[p. 325]{donagi1983generic} (see \cite[Theorem 0.2]{voisin2022schiffer} for the statement in terms of a Hodge isometry), which implies $Z\simeq Z'$. Donagi's theorem applies to the present case, since its numerical assumptions are automatically satisfied as long as $d\neq 1,2$.
    Now, by Lemma \ref{lem:ram_div_iso}, we conclude that $X \simeq X'$.
\end{proof}

\subsection{The case of \texorpdfstring{$Y_1$}{Y1}}

 We now consider a double cover $X$ of $\bP(1,1,1,2)$ branched in a sextic hypersurface $Z$. This cover is known as a \emph{Veronese double cone} and is often denoted $Y_1$ in the literature (see Example \ref{ex:sods}\eqref{itm:veronese}). These are prime Fano threefolds of index $2$, and degree $1$.

\begin{theorem} \label{thm:Y_1}
    Let $X,X'$ be Veronese double cones, with $X$ very general. Suppose that we have an equivalence $\Phi\colon\sA_X \xrightarrow{\sim} \sA_{X'}$ of Fourier--Mukai type which commutes with the covering involution.
    Then $X \simeq X'$.
\end{theorem}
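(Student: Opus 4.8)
The plan is to mirror the proof of Theorem \ref{thm:X_2}, replacing the final Hodge-theoretic input with a weak Torelli theorem for weighted hypersurfaces. First I would record the numerics: here $Y=\bP(1,1,1,2)$, so $m=5$, the branch sextic $Z$ lies in $|\sO_Y(6)|$ giving $d=3$, and $M=m-d=2$. In particular $0<M<d$, so the hypotheses of Section \ref{sec:sodAX} and of Theorem \ref{thm:main_hodge} (which needs $0<M\le d$) are met, and $\sA_Z=\langle \Db(Z),\Db(\pt)\rangle$ as in Example \ref{ex:sods}. I would stress the contrast with $X_2$: there $M=1$ divides $d$, so Lemma \ref{lem:Mdivides} forces any equivalence to commute with $\tau$, whereas here $M=2$ divides no odd multiple of $d=3$ (every odd multiple of $3$ is odd). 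This is precisely why commutation with the covering involution must be imposed as a hypothesis rather than deduced.

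Granting that hypothesis, Lemma \ref{lem:eq_equivalence_descends} produces an equivalence of equivariant categories $\Phi^{\mu_2}\colon \sA_X^{\mu_2}\to \sA_{X'}^{\mu_2}$. I would note that this descent is again of Fourier--Mukai type in the sense of Section \ref{ssec:equivar_equivalences}: its kernel is the equivariant lift of the kernel of $\Phi$ equipped with its linearisation, so the hypotheses of Theorem \ref{thm:main_hodge} are in force. Since $X$ is very general and $d=d'$, Theorem \ref{thm:main_hodge} then yields a Hodge isometry $H^2_\prim(Z,\bZ)\simeq H^2_\prim(Z',\bZ)$.

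It remains to reconstruct $Z$. By Remark \ref{rem:X_is_Fano}, $K_Z=\sO_Z(-m+2d)=\sO_Z(1)$ is ample, so $Z$ and $Z'$ are smooth minimal surfaces of general type realised as sextics in $\bP(1,1,1,2)$. The Hodge isometry of primitive cohomology, together with the fixed polarization class (which lies in the orthogonal complement of $H^2_\prim$), assembles into an isometry of polarized Hodge structures, so I would invoke the weak global Torelli theorem for weighted projective hypersurfaces \cite{saito1986weak}. Using that $X$, hence $Z$, is very general places us at a general point of the period domain, where weak Torelli upgrades the period identification to an isomorphism $Z\simeq Z'$. Finally, Lemma \ref{lem:ram_div_iso} recovers $X\simeq X'$.

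The \emph{main obstacle} is verifying that Saito's weak Torelli theorem genuinely applies to this family: one must check its numerical hypotheses for sextics in $\bP(1,1,1,2)$, and confirm that an abstract Hodge isometry of primitive cohomology (rather than an a priori effective or geometric one) already forces an isomorphism for a very general member, with the sign ambiguities of such isometries not affecting the isomorphism class. A secondary point to pin down is that the equivariant descent $\Phi^{\mu_2}$ is of Fourier--Mukai type; this is routine given the equivariant enhancement of kernels, but it should be stated explicitly before appealing to Theorem \ref{thm:main_hodge}.
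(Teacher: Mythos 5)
Your proposal is correct and follows the paper's own proof essentially step for step: descend to the equivariant categories via Lemma \ref{lem:eq_equivalence_descends}, apply Theorem \ref{thm:main_hodge} to obtain the Hodge isometry on primitive degree-2 cohomology, invoke Saito's generic Torelli theorem \cite{saito1986weak} for degree-6 hypersurfaces in $\bP(1,1,1,2)$ to get $Z \simeq Z'$, and conclude with Lemma \ref{lem:ram_div_iso}. Your supplementary observations --- that $M=2$ divides no odd multiple of $d=3$, so Lemma \ref{lem:Mdivides} fails and the involution-equivariance hypothesis must be imposed, and that the descended equivalence $\Phi^{\mu_2}$ is again of Fourier--Mukai type --- are correct refinements of points the paper leaves implicit.
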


\begin{proof}
    By assumption and Lemma \ref{lem:eq_equivalence_descends}, $\Phi$ descends to an equivariant equivalence $\sA_{X}^{\mu_2} \simeq \sA_{X'}^{\mu_2}$, Then, by Theorem \ref{thm:main_hodge} we have a Hodge isometry $H^{ 2}_\prim(Z, \bQ) \simeq H^{2}_\prim(Z', \bQ)$. Now by \cite[Theorem A]{saito1986weak}, generic Torelli holds for the degree $6$ hypersurface $Z \subset \bP(1,1,1,2)$. Thus $Z \simeq Z'$ and by Lemma \ref{lem:ram_div_iso} we get $X \simeq X'$.
\end{proof}

\begin{remark}\label{rmk:torelliY1}
    The relations in Proposition \ref{prop:serre_rotation_tau_relations} read $\sfR^3 \simeq \tau[1]$ and $\sfS_{\sA_X}^{-1}\simeq \sfR^2[-3]$. Since any equivalence must commute with shifts and Serre functors, then $\Phi$ commutes with $\tau$ if and only if it commutes with $\sfR$.
\end{remark}

\subsection{The case of \texorpdfstring{$Y_2$}{Y2}} \label{sec:Y_2}

We now consider a double cover $X$ of $\bP^3$ branched in a quartic K3 $Z \subset \bP^3$. This cover is known as the \emph{quartic double solid} and is often referred to as $Y_2$ in the literature. Below, we give a new proof of a categorical Torelli theorem for quartic double solids. Other proofs have been obtained in \cite{bernardara2016semi, APR19, bayer2023kuznetsov, feyzbakhsh2023new}.

\begin{theorem} \label{thm:Y_2}
    Let $X, X'$ be quartic double solids, with $X$ very general. Then an equivalence $\sA_X \simeq \sA_{X'}$ implies $X \simeq X'$.
\end{theorem}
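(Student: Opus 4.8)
The plan is to follow the template of Theorem \ref{thm:X_2}, replacing Donagi's generic Torelli theorem (which excludes quartic surfaces) by the global Torelli theorem for K3 surfaces. For a quartic double solid we have $Y=\bP^3$, so $m=4$ and $2d=4$, giving $d=2$ and $M=m-d=2$. Thus $M=d$, we are in the $K$-trivial case of Remark \ref{rem:X_is_Fano}: the branch divisor $Z\subset\bP^3$ is a quartic K3 surface and $\sA_Z\simeq\Db(Z)$.

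First I would descend the equivalence to the equivariant categories. Since $M=2$ divides $d=2$, and $d$ is the first (odd) multiple of itself, Lemma \ref{lem:Mdivides} shows that $\Phi\colon\sA_X\simeq\sA_{X'}$ commutes with the covering involutions; unlike the Veronese double cone, no extra hypothesis is needed here. Taking $\Phi$ of Fourier--Mukai type as in the previous cases, Lemma \ref{lem:eq_equivalence_descends} produces an equivariant equivalence $\Phi^{\mu_2}\colon\sA_X^{\mu_2}\simeq\sA_{X'}^{\mu_2}$, and Theorem \ref{thm:main_hodge} then yields a Hodge isometry $g\colon H^2_\prim(Z,\bZ)\simeq H^2_\prim(Z',\bZ)$.

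Next I would upgrade this to an isomorphism of surfaces via Torelli for K3 surfaces. Because $X$ is very general, $Z$ has Picard rank $1$ with $\Pic(Z)=\bZ h$, $h^2=4$, and $H^2_\prim(Z,\bZ)=h^\perp$ is exactly the transcendental lattice; in particular it carries no nonzero integral $(1,1)$-class. As a Hodge isometry preserves the Hodge decomposition, $H^2_\prim(Z',\bZ)$ likewise has no integral $(1,1)$-class, so $Z'$ also has Picard rank $1$ with $\Pic(Z')=\bZ h'$ and $(h')^2=4$ (this is why no separate generality hypothesis on $X'$ is required). I would then extend $g$, together with $h\mapsto h'$, to a Hodge isometry $H^2(Z,\bZ)\simeq H^2(Z',\bZ)$: both ambient lattices are the even unimodular K3 lattice $\Lambda$, realised as overlattices of $\langle 4\rangle\oplus h^\perp$ and $\langle 4\rangle\oplus(h')^\perp$, and since $h^2=(h')^2=4$ the two gluings agree up to the sign adjustment discussed below. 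The extension sends the ample generator $h$ to the ample generator $h'$, hence maps the Kähler cone to the Kähler cone, so the strong global Torelli theorem for K3 surfaces gives $Z\simeq Z'$. Finally, Lemma \ref{lem:ram_div_iso} upgrades $Z\simeq Z'$ to $X\simeq X'$.

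The main obstacle is the lattice-theoretic extension step. A priori the given $g$ need not be compatible with the gluing data realising $\Lambda$ as an overlattice of $\langle 4\rangle\oplus h^\perp$. The key point is that both discriminant forms are $\bZ/4$ with $q=\tfrac14$, whose orthogonal group is $\{\pm\id\}$; since for Picard rank $1$ the only Hodge isometries of the transcendental lattice are $\pm\id$, exactly one of $g,-g$ induces the gluing compatible with $h\mapsto h'$, and that choice extends to a Hodge isometry of the full K3 lattices (the sign change $g\mapsto -g$ is again a Hodge isometry, and does not affect $h\mapsto h'$). One then checks that this extension preserves the orientation of the positive-definite part, spanned by $h$ together with $\langle\mathrm{Re}\,\omega,\mathrm{Im}\,\omega\rangle$, so that the effective form of Torelli applies and yields the isomorphism $Z\simeq Z'$.
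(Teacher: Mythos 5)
Your argument is correct in substance, but note two things: one genuine (if citable) gap, and a genuinely different endgame from the paper's.

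The gap is the Fourier--Mukai hypothesis. The theorem as stated assumes only an equivalence $\sA_X \simeq \sA_{X'}$, whereas you write ``taking $\Phi$ of Fourier--Mukai type as in the previous cases.'' Theorem \ref{thm:main_hodge} genuinely needs FM type (it is what allows the dg-lift and hence the Hodge isometry), so as written your proof only establishes the theorem for FM-type equivalences, which is weaker than the statement. The paper closes exactly this gap in its first line by citing \cite[Theorem 1.3]{quarticdoublesolidhearts}, which says that \emph{every} equivalence between Kuznetsov components of quartic double solids is of Fourier--Mukai type. So the fix is a single citation, but in a blind proof this step cannot simply be assumed. (Your reduction $m=4$, $d=2$, $M=d=2$, the application of Lemmas \ref{lem:Mdivides} and \ref{lem:eq_equivalence_descends}, and the use of \emph{integral} coefficients from Theorem \ref{thm:main_hodge} are all correct; the integrality is in fact essential for any lattice-theoretic continuation, and the paper is sloppier than you on this point, writing $\bQ$-coefficients in its proof.)

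After the Hodge isometry $H^2_\prim(Z,\bZ)\simeq H^2_\prim(Z',\bZ)$, the two proofs diverge. The paper identifies primitive cohomology with the transcendental lattice (Picard rank $1$, \cite[Lemma 3.3.1]{huybrechts2016lectures}), invokes derived Torelli to get a Fourier--Mukai equivalence $\Db(Z)\simeq\Db(Z')$ \cite[Corollary 16.3.7]{huybrechts2016lectures}, and then uses Oguiso's count of Fourier--Mukai partners of a degree-$4$, rank-$1$ K3 \cite[Theorem 1.7]{oguiso2002k3} to conclude $Z\simeq Z'$. You instead extend the isometry directly across the gluing $\bZ h\oplus h^\perp\subset H^2(Z,\bZ)$, using that the discriminant form $(\bZ/4,\pm\tfrac14)$ has orthogonal group $\{\pm\id\}$ and that $-g$ is again a Hodge isometry, and then apply the strong global Torelli theorem; your observation that $\Pic(Z)=\bZ h$ with $h^2=4$ contains no $(-2)$-classes, so that $h\mapsto h'$ already forces K\"ahler cone to K\"ahler cone, makes that application legitimate. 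Both routes work: yours is more self-contained (elementary lattice theory plus classical Torelli, no derived Torelli and no FM-partner counting) at the cost of the careful gluing/sign analysis; the paper's is a shorter citation chain. Your final ``orientation'' check is automatic and can be dropped: a Hodge isometry acts $\bC$-linearly on $H^{2,0}$, hence preserves the orientation of $\langle \mathrm{Re}\,\omega,\mathrm{Im}\,\omega\rangle$, and it sends $h$ to $h'$.
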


\begin{proof}
    Firstly, note that all equivalences in the quartic double solid case are of Fourier--Mukai type by \cite[Theorem 1.3]{quarticdoublesolidhearts}. Also note that Lemma \ref{lem:Mdivides} holds in this case. Thus, as in the proof of Theorem \ref{thm:X_2}, the equivalence implies a Hodge isometry $H_\prim^2(Z, \bQ) \simeq H_\prim^2(Z', \bQ)$. Since $X$ is very general, $Z$ has Picard rank 1. This means that that the primitive second cohomology of $Z$ is Hodge isometric to the transcendental lattice of $Z$ \cite[Lemma 3.3.1]{huybrechts2016lectures}. So the Hodge isometry on primitive cohomology gives an isometry between the transcendental lattices of $Z$ and $Z'$. Therefore, by \cite[Corollary 16.3.7]{huybrechts2016lectures}, $Z$ and $Z'$ are derived equivalent via a Fourier--Mukai functor. By \cite[Theorem 1.7]{oguiso2002k3}, $Z$ can only have one trivial Fourier--Mukai partner, so $Z \simeq Z'$. Finally, by Lemma \ref{lem:ram_div_iso} we get $X \simeq X'$. 
\end{proof}


\providecommand{\bysame}{\leavevmode\hbox to3em{\hrulefill}\thinspace}
\providecommand{\MR}{\relax\ifhmode\unskip\space\fi MR }
\providecommand{\MRhref}[2]{%
  \href{http://www.ams.org/mathscinet-getitem?mr=#1}{#2}
}


\bibliographystyle{alpha}
{\small{\bibliography{mybib2}}}

\end{document}